\newcommand{\W}{\mathcal{W}}
\newcommand{\R}{\mathbb{R}}
\newcommand{\E}{\mathbb{E}}
\newcommand{\PP}{\mathbb{P}}
\newcommand{\tmu}{\tilde{\mu}}
\newcommand{\tk}{\tilde{k}}
\newcommand{\tx}{\tilde{x}}
\newcommand{\ty}{\tilde{y}}
\newcommand{\tnu}{{\tilde{\nu}}}
\newcommand{\tchi}{\tilde{\chi}}
\newcommand{\tpi}{\tilde{\pi}}
\newcommand{\tsigma}{{\tilde{\sigma}}}
\newcommand{\cW}{\mathcal{W}}
\newcommand{\bW}{\overline{\mathcal{W}}_2}
\newcommand{\cP}{\mathcal{P}}
\newcommand{\cI}{\mathcal{I}}
\newcommand{\cJ}{\mathcal{J}}
\newcommand{\cN}{\mathcal{N}}
\newcommand{\cS}{{\mathcal S}}
\newcommand{\cO}{{\mathcal O}}
\newcommand{\cB}{{\mathcal B}}
\newcommand{\tr}{{\rm tr}}
\newcommand{\BW}{{\bf bw}}
\DeclareMathOperator{\co}{\operatorname{co}}
\renewcommand{\epsilon}{\varepsilon}
\numberwithin{equation}{section}
\theoremstyle{plain}
\newtheorem{prooff}{Proof}[section]
\newtheorem{lemma}[prooff]{Lemma}
\newtheorem{theorem}[prooff]{Theorem}
\newtheorem{proposition}[prooff]{Proposition}
\newtheorem{corollary}[prooff]{Corollary}
\theoremstyle{definition}
\newtheorem{example}[prooff]{Example}
\newtheorem{remark}[prooff]{Remark}
\begin{document}

\title[Wasserstein projection in the convex order]{Wasserstein projections in the convex order: regularity and characterization in the quadratic Gaussian case}

\author{Aur\'elien Alfonsi and Benjamin Jourdain
}
\thanks{CERMICS, ENPC, Institut Polytechnique de Paris, INRIA, Marne-la-Vall\'ee, France. E-mails: aurelien.alfonsi@enpc.fr, benjamin.jourdain@enpc.fr. This research benefited from the support of the chair Risques Financiers, Fondation du Risque.}
\maketitle
\begin{abstract} 
  In this paper, we first show continuity of both Wasserstein projections in the convex order when they are unique. We also check that, in arbitrary dimension $d$, the quadratic Wasserstein projection of a probability measure $\mu$ on the set of probability measures dominated by $\nu$ in the convex order is non-expansive in $\mu$ and H\"older continuous with exponent $1/2$ in $\nu$. When $\mu$ and $\nu$ are Gaussian,  we check that this projection is Gaussian and also consider the quadratic Wasserstein projection on the set of probability measures $\nu$ dominating $\mu$ in the convex order. In the case when $d\ge 2$ and $\nu$ is not absolutely continuous with respect to the Lebesgue measure where uniqueness of the latter projection was not known, we check that there is always a unique Gaussian projection and characterize when non Gaussian projections with the same covariance matrix also exist. Still for Gaussian distributions, we characterize the covariance matrices of the two projections. It turns out that there exists an orthogonal transformation of space under which the computations are similar to the easy case when the covariance matrices of $\mu$ and $\nu$ are diagonal.

{\bf Keywords:} Optimal transport, Weak optimal transport, Projection, Convex order.\end{abstract}

For $\rho\ge 1$, we denote the celebrated $\rho$-Wasserstein distance between $\mu$ and $\nu$ in the set $\mathcal P_\rho(\R^d)$ of probability measures on $\R^d$ with  finite $\rho$-th moment by
\begin{equation}
	\label{eq:def.Wasserstein_distance}
	\textstyle
	\W_\rho(\mu,\nu)= \left(\inf_{ \pi \in \Pi(\mu,\nu)} \int_{\R^d \times \R^d} |x - y|^p \pi(dx,dy)\right)^{1/p},
\end{equation} 
where we write $\Pi(\mu,\nu)\subset\cP_\rho(\R^d\times\R^d)$ for the set of couplings with marginals $\mu$ and $\nu$. We say that $\mu$ is smaller than $\nu$ in the convex order and denote $\mu \le_{cx} \nu$ if 
\begin{equation}
	\label{eq:def.convex_order}
	\textstyle\forall f\colon \R^d \to \R\mbox{ convex},\;
	\int_{\R^d} f(x) \, \mu(dx) \le \int_{\R^d} f(x) \, \nu(dy).
\end{equation}
We denote the metric projections with respect to $\mathcal W_\rho$ of $\mu$ (resp.\ $\nu$) onto $\{ \eta \in \mathcal P_\rho(\R^d) : \eta \le_{cx} \nu \}$ (resp.\ $\{ \eta \in \mathcal P_\rho(\R^d) : \mu \le_{cx} \eta \}$) by $\cI_\rho(\mu,\nu)$ and $\cJ_\rho(\nu,\mu)$ :
$$\cW_\rho(\mu,\cI_\rho(\mu,\nu))=\inf_{\eta\in\cP_\rho(\R^d):\eta\le_{cx}\nu}\cW_\rho(\mu,\eta)\mbox{ and }\cW_\rho(\nu,\cJ_\rho(\nu,\mu))=\inf_{\eta\in\cP_\rho(\R^d):\mu\le_{cx}\eta}\cW_\rho(\nu,\eta).$$
These Wasserstein projections in the convex order were introduced in \cite{AlCoJo20} to restore the convex order between finitely supported approximations of two ordered probability measures. Minimizers always exist and, when $\rho>1$, 
$\cI_\rho(\mu,\nu)$ is unique while uniqueness of $\cJ_\rho(\nu,\mu)$ was only established when $d=1$ or $\nu$ is absolutely continuous with respect to the Lebesgue measure. Moreover, according to \cite[Corollary 4.4]{AlCoJo20},
\begin{equation}
   \label{fb}\forall \mu,\nu\in\cP_\rho(\R^d),\;\cW_\rho(\mu,\cI_\rho(\mu,\nu))=\cW_\rho(\nu,\cJ_\rho(\nu,\mu)).
\end{equation}
When $d = 1$, the projections are denoted by $\cI(\mu,\nu)$ and $\cJ(\nu,\mu)$ since they do not depend on $\rho>1$ and explicit formulas for their quantile functions are available. The quantile function of $\eta\in\cP(\R)$ is denoted by $F_\eta^{-1}(u)=\inf\{x\in\R:\eta((-\infty,x])\ge u\},\;u\in(0,1)$. Then \cite[Theorem 2.6 and Proposition 4.2]{AlCoJo20} state, for $u \in (0,1)$,
\begin{equation}\label{eq:quantileProjections}
F_{\cI(\mu,\nu)}^{-1}(u)=F_\mu^{-1}(u)-\partial_-\co(G)(u)\text{ and } F_{\cJ(\nu,\mu)}^{-1}(u)=F_\nu^{-1}(u)+\partial_-\co(G)(u),
\end{equation}
where $G(u) := \int_0^u(F_\mu^{-1}-F_\nu^{-1})(v)\,dv$, $\co$ denotes the convex hull, and $\partial_-$ the left-hand derivative.

Dual formulations of the minimization problems defining $\mathcal I_2(\mu,\nu)$ and $\mathcal J_2(\mu,\nu)$ have been studied by Kim and Ruan \cite{KimRuan} who in particular show in Corollary 8.5 that when $\mu,\nu\in\cP_2(\R^d)$ with $\nu$ absolutely continuous with respect to the Lebesgue measure then there exists a continuously differentiable convex function $\phi:\R^d\to\R$ such that $\cI_2(\nu,\mu)=\nabla \phi \#\mu$ and $\nu=\nabla \phi \# \cJ_2(\nu,\mu)$, thus generalizing the one-dimensional statement of \cite[Proposition 4.5]{AlCoJo20}.

Gozlan, Roberto, Samson, and Tetali \cite{GoRoSaTe17} introduced a related generalization of optimal transport, the weak optimal transport, in order to study measure concentration inequalities.
The following barycentric weak optimal transport problem received in recent years special attention, see for example \cite{GoRoSaTe17,GoRoSaSh18,GoJu18,AlCoJo19,AlCoJo20,BaBePa18,BaBePa19}:
for $\mu, \nu \in \mathcal P_\rho(\R^d)$, consider
\begin{equation}\label{eq:def_WOT}
	\textstyle
	\mathcal V_\rho^\rho(\mu,\nu) := \inf_{\pi \in \Pi(\mu,\nu)} \int_{\R^d} \left| x - \int_{\R^d} y \, \pi_x(dy) \right|^\rho \, \mu(dx),
\end{equation}
where we write $(\pi_x)_{x \in \R^d}$ for a disintegration kernel of $\pi$ w.r.t.\ its $\mu$-marginal: $\pi(dx,dy)=\mu(dx)\pi_x(dy)$.
This 
problem has an intrinsic connection with the problem of finding Wasserstein projection.
Indeed, we have that the values of $\mathcal V_\rho(\mu,\nu)$ and $\W_\rho(\mu,\mathcal I_\rho(\mu,\nu))$ coincide, see \cite{AlCoJo20,BaBePa18}.
Moreover, if $\pi^\ast$ is an optimizer of \eqref{eq:def_WOT} then the image of the first marginal $\mu$ under the map $x \mapsto \int_{\R^d} \pi_x^\ast(y)\,dy$ is a minimizer of $\inf_{\eta\le_c\nu}{\mathcal W}_\rho(\mu,\eta)$ and coincides with $\mathcal I_\rho(\mu,\nu)$ when $\rho > 1$.
Therefore, when $\mu, \nu \in \mathcal P_\rho(\R^d)$ are finitely supported, \eqref{eq:def_WOT} can be used to compute the Wasserstein projection.
In particular, ${\mathcal I}_2(\mu,\nu)$ can be computed by solving a quadratic optimization problem with linear constraints.
We refer to \cite{GoRoSaTe17,AlBoCh18} for dual formulations of weak optimal transport problems with additional martingale constraints, to \cite{BaBePa18} for the existence of optimal couplings and necessary and sufficient optimality conditions, to \cite{BaPa19} for continuity of their value function in terms of the marginal distributions $\mu$ and $\nu$, and to \cite{BaPa20} for applications of such problems.
We point out the connection of Wasserstein projections to Caffarelli's contraction theorem that was discovered in \cite{FaGoPr20}.

The following Lipschitz regularity of the projections was established in \cite[Propositions 3.1 and 4.3]{AlCoJo20} under the assumption $\mu\le_{cx}\nu$ (so that $\mathcal I_\rho(\mu,\nu)=\mu$ and $\mathcal J_\rho(\nu,\mu)=\nu$) :
\begin{align}
   \mathcal W_\rho(\cI_\rho(\mu,\nu),\cI_\rho(\tilde \mu,\tilde \nu))&\le 2\mathcal W_\rho(\mu,\tilde \mu)+\phantom{2}\mathcal W_\rho(\nu,\tilde \nu)\label{lipproi}\\\mathcal W_\rho(\cJ_\rho(\nu,\mu),\cJ_\rho(\tilde \nu,\tilde \mu))&\le\phantom{2}\mathcal W_\rho(\mu,\tilde \mu)+2\mathcal W_\rho(\nu,\tilde \nu).\label{lipproj}
\end{align}
In dimension $d=1$, generalization of those estimations without convex ordering of $\mu$ and $\nu$ and with $\cI_\rho$ and $\cJ_\rho$ replaced by $\cI$ and $\cJ$ was proved in \cite{JMP23} by exploiting \eqref{eq:quantileProjections}. In \cite{KKRW}, the authors introduce a statistical test for convex order based on the quadratic Wasserstein projection distance and state in Theorem 4.3 and Remark 4.4 {a quadratic estimation which can easily be extended to any $\rho\ge 1$:}  for $\mu,\nu,\tilde\mu,\tilde\nu\in\cP_\rho(\R^d)$, 
\begin{align}
  |\cW_\rho(\mu,\cI_\rho(\mu,\nu))-\cW_\rho(\tilde \mu,\cI_\rho(\tilde \mu,\tilde \nu))|&\le \cW_\rho(\mu,\tilde\mu)+ \cW_\rho(\nu,\tilde\nu)\label{lipdisti}\\
  |\cW_\rho(\nu,\cJ_\rho(\nu,\mu))-\cW_\rho(\tilde \nu,\cJ_\rho(\tilde \nu,\tilde \mu))|&\le \cW_\rho(\mu,\tilde\mu)+ \cW_\rho(\nu,\tilde\nu).\label{lipdistj}
\end{align}
Of course, such a Lipschitz estimation of the projection distance in $\R$ follows from the much stronger matching Lipschitz estimation of the projections in $\cP_\rho(\R^d)$. In the first section of the present paper, we take advantage of the continuity of the projection distance to deduce continuity of the projections when uniqueness holds i.e. under the sole assumption that $\rho>1$ for $\cI_\rho$ and the additional assumption that $d=1$ or the probability measure $\nu$ is absolutely continuous with respect to the Lebesgue measure for $\cJ_\rho$.

In the second section, we prove that in any dimension $d$, $\cP_2(\R^d)\ni\mu\mapsto \mathcal{I}_2(\mu,\nu)$ is non-expansive with respect to $\cW_2$ by a straightforward adaptation of the proof of the same property  for the projection on a closed convex set in an Hilbert space. This is rather remarkable since, while the subset of $\cP_2(\R^d)$ consisting of probability measures dominated by $\nu$ in the convex order is closed and convex, $\cP_2(\R^d)$ is not even a vector space. As a consequence, the factor $2$ in the right-hand side of \eqref{lipproi} is not needed in the quadratic case $\rho=2$ when $\mu\le_{cx}\nu$ or $d=1$. We also check that $\cP_2(\R^d)\ni\nu\mapsto \mathcal{I}_2(\mu,\nu)$ is locally H\"older continuous with exponent $\frac 12$.

The remaining of the paper is devoted to the quadratic Wasserstein projections $\cI_2(\cN_d(m_\mu,\Sigma_\mu),\cN_d(m_\nu,\Sigma_\nu))$ and $\cJ_2(\cN_d(m_\nu,\Sigma_\nu),\cN_d(m_\mu,\Sigma_\mu))$ when $m_\mu,m_\nu\in\R^d$, $\Sigma_\mu,\Sigma_\nu$ belong to the set $\cS_d^+$ of symmetric positive semi-definite $d\times d$ matrices and $\cN_d(m,\Sigma)$ denotes the Gaussian distribution with expectation $m\in\R^d$ and covariance matrix $\Sigma\in\cS^+_d$. In the third section, we check that $\cI_2(\cN_d(m_\mu,\Sigma_\mu),\cN_d(m_\nu,\Sigma_\nu))$ is Gaussian.
We also prove that when $d\ge 2$ and $\Sigma_\nu$ is singular (so that $\cN_d(m_\nu,\Sigma_\nu)$ is not absolutely continuous), then uniqueness of the projection of $\cN_d(m_\nu,\Sigma_\nu)$ on the set of probability measures dominating $\cN_d(m_\mu,\Sigma_\mu)$ may indeed fail and we characterize the failure in terms of the covariance matrices $\Sigma_\mu$ and $\Sigma_\nu$.  But all the projections share the same expectation and the same covariance matrix and there always exists a unique projection among Gaussian distributions that we still denote by $\cJ_2(\cN_d(m_\nu,\Sigma_\nu),\cN_d(m_\mu,\Sigma_\mu))$.

In the fourth section, we characterize the respective covariance matrices $\Sigma_{\cI}$ and $\Sigma_{\cJ}$ of $\cI_2(\cN_d(m_\mu,\Sigma_\mu),\cN_d(m_\nu,\Sigma_\nu))$ and $\cJ_2(\cN_d(m_\nu,\Sigma_\nu),\cN_d(m_\mu,\Sigma_\mu))$ by leveraging on the explicit expression for the quadratic {Wasserstein} distance between two Gaussian distributions derived in \cite{Dowlan,GiSho,Olpuk}.  When $\Sigma_\mu$ and $\Sigma_\nu$ are non singular, there exists an orthogonal matrix $O\in\R^{d\times d}$ such that $ODO^*\Sigma_\mu ODO^*\le \Sigma_\nu $ with $D={\rm diag}\left(1\wedge\sqrt{\frac{(O^*\Sigma_\nu O)_{11}}{(O^*\Sigma_\mu O)_{11}}},\cdots,1\wedge\sqrt{\frac{(O^*\Sigma_\nu O)_{dd}}{(O^*\Sigma_\mu O)_{dd}}}\right)$ and for any such $O$, we have $\Sigma_{\cI}=ODO^*\Sigma_\mu ODO^*$ and $\Sigma_{\cJ}=OD^{-1}O^*\Sigma_\nu OD^{-1}O^*$. The more involved singular case is covered by the general result stated in Theorem \ref{thmprojgauss}.

 {\bf Notation :}

\begin{itemize}
\item For $d\ge 1$, $\cS^+_d$ (resp. $\cS^{+,*}_d$) is the set of $d\times d$ positive semi-definite (resp. positive definite) matrices, $\cO_d$ is the group of $d\times d$ orthogonal matrices, and $\mathfrak{C}_d=\{ \Sigma \in \cS^+_d: \forall i\in \{1,\dots,d\},\ \Sigma_{ii}=1 \}$ is the set of correlation matrices. 
\item For $d\ge 1$ and $\lambda_1,\dots,\lambda_d \in \R$, ${\rm diag}(\lambda_1,\dots,\lambda_d)$ is the diagonal matrix with elements $\lambda_i$, $1\le i\le d$. 
\item For a $d\times d$ matrix $M$, we denote $\textup{dg}(M)={\rm diag}(M_{11},\dots,M_{dd})$ the diagonal matrix made with the diagonal elements of~$M$.  
\item For a $d\times d$ matrix $M$, we note $M_{i,j}$ or simply $M_{ij}$ when there is no ambiguity the element of the $i^{th}$ row and $j^{th}$ column. For
  $1\le i_1\le i_2\le d$ and $1\le j_1\le j_2\le d$, we denote $(M)_{i_1:i_2,j_1:j_2}$ the $(i_2-i_1+1)\times (j_2-j_1+1)$ matrix such that $\left((M)_{i_1:i_2,j_1:j_2}\right)_{ij}=M_{i+i_1-1,j+j_1-1}$ for $1\le i \le i_2-i_1+1$ and $1\le j \le j_2-j_1+1$.
\item For $\eta$ in the set $\cP(\R^d)$ of probability measures on $\R^d$ and $f:\R^d\to\R^{d'}$, we denote by $f\#\eta\in\cP(\R^{d'})$ the image of $\eta$ by $f$.
\item For $\mu,\nu,\eta\in\cP(\R^d)$ and $\pi\in\Pi(\mu,\nu),k\in\Pi(\nu,\eta)$ we denote by $k_y(dz)$ a Markov kernel such that $k(dy,dz)=\nu(dy)k_y(dz)$ and by $\pi k$ the probability measure on $\R^d\times\R^d$ defined by $\pi k(dx,dz)=\int_{y\in\R^d}\pi(dx,dy)k_y(dz)$. Note that $\pi k\in\Pi(\mu,\eta)$ and $\pi k(dx,dz)=\mu(dx)\pi k_x(dz)$. \end{itemize}
\section{Continuity of the projections with index $\rho>1$}
\begin{proposition}\label{propcont}
   Let $\rho>1$. Then $\cP_\rho(\R^d)\times \cP_\rho(\R^d)\ni(\mu,\nu)\mapsto \cI_\rho(\mu,\nu)$ is continuous for $\cW_\rho$ and so is $\cP_\rho(\R)\times \cP_\rho(\R)\ni(\nu,\mu)\mapsto \cJ_\rho(\nu,\mu)$. Last, denoting by $\cP^{\rm abs}_\rho(\R^d)$ the subset of $\cP_\rho(\R^d)$ consisting in probability measures that admit a density with respect to the Lebesgue measure on $\R^d$, $\cP^{\rm abs}_\rho(\R^d)\times \cP_\rho(\R^d)\ni(\nu,\mu)\mapsto \cJ_\rho(\nu,\mu)$ is continuous for $\cW_\rho$.
 \end{proposition}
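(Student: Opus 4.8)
The plan is to prove all three assertions by a common scheme resting on the continuity of the projection \emph{distance} provided by \eqref{lipdisti} and \eqref{lipdistj}: given $(\mu_n,\nu_n)\to(\mu,\nu)$ for $\cW_\rho$, I would first show the sequence of projections is relatively compact in $(\cP_\rho(\R^d),\cW_\rho)$, then that every cluster point is admissible and minimizes the limiting problem, and finally invoke uniqueness to get convergence of the whole sequence. For $\cI_\rho$, put $\eta_n:=\cI_\rho(\mu_n,\nu_n)$, well defined because $\rho>1$, so that $\eta_n\le_{cx}\nu_n$. Since $x\mapsto(|x|-R)_+^\rho$ is convex and bounded below by $2^{-\rho}|x|^\rho$ on $\{|x|\ge 2R\}$, testing the convex order with it yields $\int_{\{|x|\ge 2R\}}|x|^\rho\,\eta_n(dx)\le 2^\rho\int_{\{|y|\ge R\}}|y|^\rho\,\nu_n(dy)$, and the right-hand side tends to $0$ as $R\to\infty$ uniformly in $n$ because $\cW_\rho$-convergence of $\nu_n$ makes the $\rho$-th moments uniformly integrable; hence $(\eta_n)_n$ is tight with uniformly integrable $\rho$-th moments, i.e.\ relatively compact for $\cW_\rho$. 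If $\eta_{n_k}\to\eta_\infty$ in $\cW_\rho$ along a subsequence, then passing to the limit in $\int f\,d\eta_{n_k}\le\int f\,d\nu_{n_k}$ first for convex Lipschitz $f$ (legitimate since $\cW_\rho$-convergence implies $\cW_1$-convergence) and then for an arbitrary convex $f$, written as the nondecreasing limit of finite maxima of affine functions, gives $\eta_\infty\le_{cx}\nu$; moreover $\cW_\rho(\mu,\eta_\infty)=\lim_k\cW_\rho(\mu_{n_k},\eta_{n_k})=\lim_k\cW_\rho(\mu_{n_k},\cI_\rho(\mu_{n_k},\nu_{n_k}))=\cW_\rho(\mu,\cI_\rho(\mu,\nu))$ by the triangle inequality and \eqref{lipdisti}, so $\eta_\infty$ minimizes $\inf_{\eta\le_{cx}\nu}\cW_\rho(\mu,\eta)$ and therefore equals $\cI_\rho(\mu,\nu)$. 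Since every subsequence has a further subsequence with this limit, $\eta_n\to\cI_\rho(\mu,\nu)$ for $\cW_\rho$.

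For $\cJ_\rho$, which is single-valued either because $d=1$ or because the measures $\nu_n$ and $\nu$ are absolutely continuous, put $\zeta_n:=\cJ_\rho(\nu_n,\mu_n)$, so $\mu_n\le_{cx}\zeta_n$. Here the convex order bounds the moments of $\zeta_n$ only from below, so compactness must come from \eqref{lipdistj}, which makes $\cW_\rho(\nu_n,\zeta_n)$ bounded and hence, by the triangle inequality around $\delta_0$, $\sup_n\int|z|^\rho\,\zeta_n(dz)<\infty$; thus $(\zeta_n)_n$ is tight. Along a subsequence $\zeta_{n_k}\rightharpoonup\zeta_\infty$ weakly; passing the convex order to the limit exactly as above (the uniform integrability of $|z|$ under $\zeta_{n_k}$ that is needed follows from $\rho>1$ and the moment bound) gives $\mu\le_{cx}\zeta_\infty$, while lower semicontinuity of $\cW_\rho$ under weak convergence together with \eqref{lipdistj} gives $\cW_\rho(\nu,\zeta_\infty)\le\liminf_k\cW_\rho(\nu_{n_k},\zeta_{n_k})=\cW_\rho(\nu,\cJ_\rho(\nu,\mu))$; since $\zeta_\infty$ is admissible it is a minimizer, hence $\zeta_\infty=\cJ_\rho(\nu,\mu)$ by uniqueness, and $\zeta_n\rightharpoonup\cJ_\rho(\nu,\mu)$ weakly.

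The main difficulty is then to upgrade this weak convergence to $\cW_\rho$-convergence, i.e.\ to prove $\int|z|^\rho\,\zeta_n(dz)\to\int|z|^\rho\,\cJ_\rho(\nu,\mu)(dz)$; because $\mu_n\le_{cx}\zeta_n$ gives no upper moment control, the uniform integrability of $|z|^\rho$ under $\zeta_n$ must be extracted from the optimal couplings. I would take $\pi_n\in\Pi(\nu_n,\zeta_n)$ optimal for $\cW_\rho$; the family $(\pi_n)_n$ is tight, so along a subsequence $\pi_{n_k}\rightharpoonup\pi_\infty\in\Pi(\nu,\cJ_\rho(\nu,\mu))$. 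The Portmanteau inequality for the nonnegative continuous function $|y-z|^\rho$ gives $\int|y-z|^\rho\,\pi_\infty\le\liminf_k\int|y-z|^\rho\,\pi_{n_k}=\liminf_k\cW_\rho(\nu_{n_k},\zeta_{n_k})^\rho=\cW_\rho(\nu,\cJ_\rho(\nu,\mu))^\rho$, and the reverse inequality holds because $\pi_\infty\in\Pi(\nu,\cJ_\rho(\nu,\mu))$; so $\int|y-z|^\rho\,\pi_{n_k}\to\int|y-z|^\rho\,\pi_\infty$ with $\pi_{n_k}\rightharpoonup\pi_\infty$, and a standard truncation argument then shows $|y-z|^\rho$ is uniformly integrable along $(\pi_{n_k})_k$. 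Since $|y|^\rho$ is uniformly integrable along $(\nu_{n_k})_k$ (as $\nu_n\to\nu$ for $\cW_\rho$) and $|z|^\rho\le 2^{\rho-1}(|y|^\rho+|y-z|^\rho)$, $|z|^\rho$ is uniformly integrable along $(\zeta_{n_k})_k$, hence $\cW_\rho(\zeta_{n_k},\cJ_\rho(\nu,\mu))\to 0$; applying this to an arbitrary subsequence of $(\zeta_n)_n$ gives $\cW_\rho$-convergence of the whole sequence. For $\cI_\rho$ no such step is needed since $\eta_n\le_{cx}\nu_n$ bounds the $\rho$-th moments of $\eta_n$ from above.
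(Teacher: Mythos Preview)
Your proof is correct, and the overall scheme---continuity of the projection distance from \eqref{lipdisti}/\eqref{lipdistj}, relative compactness, identification of cluster points via uniqueness---is the same as the paper's. The differences are as follows.

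For $\cI_\rho$ you take a genuine shortcut: testing the convex order $\eta_n\le_{cx}\nu_n$ against the convex function $(|x|-R)_+^\rho$ transfers the uniform $\rho$-integrability of $(\nu_n)$ (from $\cW_\rho$-convergence) directly to $(\eta_n)$, so you get $\cW_\rho$-relative compactness at once and never need to upgrade from weak to $\cW_\rho$ convergence. The paper instead bounds only $\sup_n\int|y|^\rho\,d\eta_n$ via the triangle inequality, works with weakly convergent optimal couplings $\pi_{n_k}\rightharpoonup\pi_\infty$, identifies the limit through lower semicontinuity, and only then upgrades to $\cW_\rho$-convergence by showing uniform integrability of $|y-x|^\rho$ under $\pi_{n_k}$ (using that $\int|y-x|^\rho\,d\pi_{n_k}=\cW_\rho^\rho(\mu_{n_k},\eta_{n_k})\to\cW_\rho^\rho(\mu,\eta_\infty)$) together with that of $|x|^\rho$ under $\mu_{n_k}$. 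Your argument is shorter here; the paper's displacement argument is however the only one that works for $\cJ_\rho$.

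For $\cJ_\rho$ the convex order goes the wrong way for moment control, and your upgrade step via optimal couplings---weak limit of $\pi_{n_k}$, convergence of $\int|y-z|^\rho\,d\pi_{n_k}$ forced by \eqref{lipdistj} and lower semicontinuity, hence uniform integrability of $|y-z|^\rho$, then $|z|^\rho\le 2^{\rho-1}(|y|^\rho+|y-z|^\rho)$---is exactly the mechanism the paper uses (the paper writes it out for $\cI_\rho$ and says the $\cJ_\rho$ case is similar). So on $\cJ_\rho$ you and the paper coincide.
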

 \begin{proof}[Proof of Proposition \ref{propcont}]
   We only prove the continuity of $\cP_\rho(\R^d)\times \cP_\rho(\R^d)\ni(\mu,\nu)\mapsto \cI_\rho(\mu,\nu)$ since, replacing \cite[Theorem 2.1]{AlCoJo20} by \cite[Theorem 4.1]{AlCoJo20} for the uniqueness of the projection, the proof is similar for the continuity properties of $\cJ_\rho$. Let $(\mu_n)_{n\ge 1}$ and $(\nu_n)_{n\ge 1}$ be sequences of elements of $\cP_\rho(\R^d)$ such that $\lim_{n\to\infty}\cW_\rho(\mu_n,\mu)=\lim_{n\to\infty}\cW_\rho(\nu_n,\nu)=0$ for some $\mu,\nu\in\cP_\rho(\R^d)$. With \eqref{lipdisti}, we deduce that $$\cW_\rho(\mu,\cI_\rho(\mu,\nu))=\lim_{n\to\infty}\cW_\rho(\mu_n,\cI_\rho(\mu_n,\nu_n))=\lim_{n\to\infty}\cW_\rho(\mu,\cI_\rho(\mu_n,\nu_n)).$$ The weakly converging sequence $(\mu_n)_{n\ge 1}$ is tight. The boundedness
   \begin{equation}\label{eq_uiI}
    \int_{\R^d}|y|^\rho\cI_\rho(\mu_{n_k},\nu_{n_k})(dy)=\cW_\rho^\rho(\delta_0,\cI_\rho(\mu_{n_k},\nu_{n_k}))\le \left(\cW_\rho(\delta_0,\mu)+\cW_\rho(\mu,\cI_\rho(\mu_{n_k},\nu_{n_k}))\right)^\rho
   \end{equation}
   combined with Markov inequality implies the tightness of $(\cI_\rho(\mu_n,\nu_n))_{n\ge 1}$. Therefore,  $(\pi_n)_{n\ge 1}$ is also tight, where, for $n\ge 1$, $\pi_n\in\Pi(\mu_n,\cI_\rho(\mu_n,\nu_n))$ is such that $\cW_\rho^\rho(\mu_n,\cI_\rho(\mu_n,\nu_n))=\int_{\R^d\times\R^d} |y-x|^\rho\pi_n(dx,dy)$. Let us consider a subsequence 
   $(\pi_{n_k})_{k\ge 1}$  converging weakly to $\pi_\infty$ and denote by $\cI$ the second marginal of $\pi_\infty$. We are going to check that $\cI=\cI_\rho(\mu,\nu)$ and $\lim_{k\to\infty}\cW_\rho(\cI_\rho(\mu_{n_k},\nu_{n_k}),\cI_\rho(\mu,\nu))=0$. Since from any subsequence of $(\pi_n)_{n\ge 1}$, we can extract a further subsequence which converges weakly, this implies the desired conclusion : $\lim_{n\to\infty}\cW_\rho(\cI(\mu_{n},\nu_{n}),\cI_\rho(\mu,\nu))=0$.

   The boundedness of~\eqref{eq_uiI}  together with the following consequence of H\"older's inequality  $$\int_A|y|\cI_\rho(\mu_{n_k},\nu_{n_k})(dy)\le \left(\cI_\rho(\mu_{n_k},\nu_{n_k})(A)\right)^{\frac{\rho-1}{\rho}}\left(\int_{\R^d}|y|^\rho\cI_\rho(\mu_{n_k},\nu_{n_k})(dy)\right)^{\frac 1\rho}$$ ensure that
 $$\lim_{\varepsilon\to 0+}\sup_{k\ge 1}\sup_{A\in\cB(\R^d):\cI_\rho(\mu_{n_k},\nu_{n_k})(A)\le \varepsilon}\int_A|y|\cI_\rho(\mu_{n_k},\nu_{n_k})(dy)=0.$$
With the weak convergence of $(\cI_\rho(\mu_{n_k},\nu_{n_k}))_{k\ge 1}$ to $\cI$ and \cite[Lemma 3.1 (d)]{BJMP22}, we deduce that $\lim_{k\to\infty}\cW_1(\cI_\rho(\mu_{n_k},\nu_{n_k}),\cI)=0$. By \cite[Theorem 6.9]{Vill09}, this implies that for $\varphi:\R^d\to\R$ continuous with at most affine growth, $\lim_{n\to\infty}\int_{\R^d}\varphi(x) \cI_\rho(\mu_{n_k},\nu_{n_k})(dx)=   \int_{\R^d}\varphi(x) \cI(dx)$ . On the other hand, $\lim_{k\to\infty}\int_{\R^d}\varphi(x)\nu_{n_k}(dx)=\int_{\R^d}\varphi(x)\nu(dx)$ since $\cW_1(\nu_{n_k},\nu)\le \cW_\rho(\nu_{n_k},\nu)\to 0$. When $\varphi$ is moreover convex, taking the limit $k\to\infty$ in the inequality $\int_{\R^d}    \varphi(x) \cI_\rho(\mu_{n_k},\nu_{n_k})(dx)\le \int_{\R^d}\varphi(x) \nu_{n_k}(dx)$ deduced from $\cI_\rho(\mu_{n_k},\nu_{n_k})\le_{cx}\nu_{n_k}$, we obtain $\int_{\R^d}    \varphi(x) \cI(dx)\le \int_{\R^d}\varphi(x) \nu(dx)$. By \cite[Lemma A.1]{AlCoJo20}, this ensures that $\cI\le_{cx}\nu$.

   Using that $\pi_\infty\in \Pi(\mu,\cI)$ then the continuity of the non-negative function $\R^d\times\R^d\ni(x,y)\mapsto|y-x|^\rho$, we get that \begin{align*}
                                                                                                                                                                                                                                                  \cW_\rho^\rho(\mu,\cI)&\le \int_{\R^d\times\R^d} |y-x|^\rho\pi_\infty(dx,dy)\le \liminf_{k\to\infty}\int_{\R^d\times\R^d} |y-x|^\rho\pi_{n_k}(dx,dy)\\&=\lim_{n\to\infty}\cW^\rho_\rho(\mu_{n},\cI_\rho(\mu_{n},\nu_{n}))=\cW^\rho_\rho(\mu,\cI_\rho(\mu,\nu)).\end{align*}
      With the definition of $\cI_\rho(\mu,\nu)$ and the uniqueness of this projection given in \cite[Theorem 2.1]{AlCoJo20}, we deduce that $\cI=\cI_\rho(\mu,\nu)$, $\pi_\infty$ is an optimal $\cW_\rho$ coupling between $\mu$ and $\cI_\rho(\mu,\nu)$ and $(\cI_\rho(\mu_{n_k},\nu_{n_k}))_{k\ge 1}$ converges weakly to $\cI_\rho(\mu,\nu)$ as $k\to\infty$. We are now going to check the stronger convergence $\lim_{k\to\infty}\cW_\rho(\cI_\rho(\mu_{n_k},\nu_{n_k}),\cI_\rho(\mu,\nu))=0$.  Let $\eta_n$ (resp. $\eta_\infty$) 
      denote the image of $\pi_n$ (resp. $\pi_\infty$) by $\R^d\times\R^d\ni(x,y)\mapsto y-x$. By continuity of this function, $(\eta_{n_k})_{k\ge 1}$ converges weakly to $\eta_\infty$ as $k\to\infty$. Since $\int_\R |x|^\rho\eta_n(dx)=\cW_\rho^\rho(\mu_{n},\cI_\rho(\mu_{n},\nu_{n}))\underset{n\to\infty}{\longrightarrow} \cW_\rho^\rho(\mu,\cI_\rho(\mu,\nu))=\int_\R |x|^\rho\eta_\infty(dx)$, with \cite[Theorem 6.9]{Vill09}, we deduce that $\lim_{k\to\infty}\cW_\rho(\eta_{n_k},\eta_\infty)=0$.
      For $\varepsilon\in(0,1]$, we have
 \begin{align*}
   &\sup_{A\in\cB(\R^d):\cI_\rho(\mu_{n_k},\nu_{n_k})(A)\le \varepsilon}\int_A |y|^\rho\cI_\rho(\mu_{n_k},\nu_{n_k})(dy) \le \sup_{B\in\cB(\R^d\times\R^d):\pi_{n_k}(B)\le \varepsilon}\int_B|y|^\rho\pi_{n_k}(dx,dy)\\&\phantom{sup}\le \sup_{B\in\cB(\R^d\times\R^d):\pi_{n_k}(B)\le \varepsilon}\int_B2^{\rho-1}(|x|^\rho+|y-x|^\rho)\pi_{n_k}(dx,dy)\\&\phantom{sup}\le 2^{\rho -1}\sup_{A\in\cB(\R^d):\mu_{n_k}(A)\le \varepsilon}\int_{A}|x|^\rho\mu_{n_k}(dx)+2^{\rho -1}\sup_{A\in\cB(\R^d):\eta_{n_k}(A)\le \varepsilon}\int_{A}|x|^\rho\eta_{n_k}(dx)
 \end{align*}
where the first inequality follows from $\pi_{n_k}(\R^d\times A)=\cI_\rho(\mu_{n_k},\nu_{n_k})(A)$ and the last one from \cite[Lemma 3.1 (a)]{BJMP22} combined with  the fact that for $B\in\cB(\R^d\times\R^d)$, the images of $1_B(x,y)\pi_{n_k}(dx,dy)$ by $(x,y)\mapsto x$ and $(x,y)\mapsto y-x$ are respectively smaller than $\mu_{n_k}$ and $\eta_{n_k}$ . Then $\lim_{k\to\infty}\cW_\rho(\mu_{n_k},\mu)+\cW_\rho(\eta_{n_k},\eta_\infty)=0$ combined with 
\cite[Lemma 3.1 (d)]{BJMP22} ensures that the supremum over $k\ge 1$ of the right-hand side converges to $0$ with $\varepsilon$ and so does the supremum of the left-hand side.  With the boundedness of  $\left(\int_{\R^d}|y|^\rho\cI_\rho(\mu_{n_k},\nu_{n_k})(dy)\right)_{k\ge 1}$ and \cite[Lemma 3.1 (d)]{BJMP22}, we conclude that $\lim_{k\to\infty}\cW_\rho(\cI_\rho(\mu_{n_k},\nu_{n_k}),\cI_\rho(\mu,\nu))=0$.                                                    \end{proof}
\begin{remark} An easy adaptation of the end of the proof shows that $\lim_{n\to\infty}\cW_\rho(\gamma_n,\gamma)=0$ when  $\rho\ge 1$, $\mu,\gamma\in\cP_\rho(\R^d)$ and $(\mu_n)_{n\ge 1}$, $(\gamma_n)_{n\ge 1}$ are sequences of $\cP_\rho(\R^d)$ such that $\lim_{n\to\infty}\cW_\rho(\mu_n,\mu)=0$, $(\gamma_n)_{n\ge 1}$ converges weakly to $\gamma$ and $\lim_{n\to\infty}\cW_\rho(\mu_n,\gamma_n)=\cW_\rho(\mu,\gamma)$. 
\end{remark}\section{Regularity of $\cI_2(\mu,\nu)$ : non expansion in $\mu$ and H\"older continuity with exponent $\frac 12$ in $\nu$}
We denote by $m(\eta)=\int x\eta(dx)$ the barycenter of a probability measure $\eta\in\cP_1(\R^d)$. For $z\in\R^d$, let $\R^d\ni x\mapsto \tau_z(x)=x+z\in\R^d$ denote the translation by the vector $z$.
\begin{proposition}\label{propmoyproj}
  For $\mu,\nu\in{\cP}_2(\R^d)$, we have
  $$\mathcal{I}_2(\mu,\nu)=\mathcal{I}_2(\tau_{m(\nu)-m(\mu)}\#\mu,\nu)\mbox{ and }\mathcal{J}_2(\nu,\mu)=\mathcal{J}_2(\tau_{m(\mu)-m(\nu)}\#\nu,\mu).$$
\end{proposition}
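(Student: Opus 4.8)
The plan is to reduce both identities to an elementary recentering formula for the quadratic Wasserstein distance, combined with the fact that every competitor in the two projection problems has a barycenter that is pinned down by the convex order constraint. Throughout, for $\eta\in\cP_2(\R^d)$ write $\bar\eta:=\tau_{-m(\eta)}\#\eta$ for its recentered copy.

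The first ingredient I would record is that $\eta_1\le_{cx}\eta_2$ forces $m(\eta_1)=m(\eta_2)$: applying \eqref{eq:def.convex_order} to the convex functions $x\mapsto\langle a,x\rangle$ and $x\mapsto-\langle a,x\rangle$ yields $\langle a,m(\eta_1)\rangle=\langle a,m(\eta_2)\rangle$ for all $a\in\R^d$. Hence every $\eta\le_{cx}\nu$ has $m(\eta)=m(\nu)$, and every $\eta$ with $\mu\le_{cx}\eta$ has $m(\eta)=m(\mu)$. The second ingredient is the identity
\begin{equation}\label{eq:recentering}
\cW_2^2(\mu,\eta)=\cW_2^2(\bar\mu,\bar\eta)+|m(\mu)-m(\eta)|^2,\qquad \mu,\eta\in\cP_2(\R^d),
\end{equation}
which I would prove by noting that $\pi\mapsto(\tau_{-m(\mu)}\times\tau_{-m(\eta)})\#\pi$ is a bijection from $\Pi(\mu,\eta)$ onto $\Pi(\bar\mu,\bar\eta)$ and that, for $\pi\in\Pi(\mu,\eta)$, expanding $|x-y|^2=\big|(x-m(\mu))-(y-m(\eta))+(m(\mu)-m(\eta))\big|^2$ and integrating, the cross term vanishes because $\int(x-m(\mu))\,\pi(dx,dy)=\int(y-m(\eta))\,\pi(dx,dy)=0$; taking the infimum over $\pi$ then gives \eqref{eq:recentering}.

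To conclude for $\cI_2$, I would set $\mu':=\tau_{m(\nu)-m(\mu)}\#\mu$, so that $m(\mu')=m(\nu)$ and $\bar{\mu'}=\bar\mu$. For every $\eta\le_{cx}\nu$ we then have $m(\eta)=m(\nu)$, so \eqref{eq:recentering} applied to $(\mu,\eta)$ and to $(\mu',\eta)$ gives $\cW_2^2(\mu,\eta)=\cW_2^2(\bar\mu,\bar\eta)+|m(\mu)-m(\nu)|^2$ and $\cW_2^2(\mu',\eta)=\cW_2^2(\bar\mu,\bar\eta)$, whence
$$\cW_2^2(\mu,\eta)=\cW_2^2(\mu',\eta)+|m(\mu)-m(\nu)|^2\qquad\text{for all }\eta\le_{cx}\nu.$$
Since the last term is a constant independent of $\eta$, the problems $\inf_{\eta\le_{cx}\nu}\cW_2(\mu,\eta)$ and $\inf_{\eta\le_{cx}\nu}\cW_2(\mu',\eta)$ have the same set of minimizers, which is a singleton because $\rho=2>1$ by \cite[Theorem 2.1]{AlCoJo20}; hence $\cI_2(\mu,\nu)=\cI_2(\mu',\nu)$. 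For $\cJ_2$ the argument is symmetric: with $\nu':=\tau_{m(\mu)-m(\nu)}\#\nu$ one has $m(\nu')=m(\mu)$ and $\bar{\nu'}=\bar\nu$, and since every $\eta$ with $\mu\le_{cx}\eta$ satisfies $m(\eta)=m(\mu)$, \eqref{eq:recentering} gives $\cW_2^2(\nu,\eta)=\cW_2^2(\nu',\eta)+|m(\mu)-m(\nu)|^2$ on the constraint set, so $\inf_{\eta:\,\mu\le_{cx}\eta}\cW_2(\nu,\eta)$ and $\inf_{\eta:\,\mu\le_{cx}\eta}\cW_2(\nu',\eta)$ share the same set of minimizers, yielding $\cJ_2(\nu,\mu)=\cJ_2(\nu',\mu)$ (interpreted, where uniqueness of $\cJ_2$ is not known, as an equality of the two sets of minimizers).

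I do not expect a real obstacle here: the proof is essentially bookkeeping. The only delicate point is keeping the cross term in \eqref{eq:recentering} under control — i.e. checking it vanishes and that the translation map is a genuine bijection of coupling sets, which is what upgrades the trivial inequality $\cW_2^2(\mu,\eta)\ge|m(\mu)-m(\eta)|^2$ to the exact splitting \eqref{eq:recentering} — together with flagging the $\cJ_2$ uniqueness caveat.
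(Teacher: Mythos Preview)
Your proof is correct and follows essentially the same approach as the paper: both rest on the bias--variance decomposition $\cW_2^2(\sigma,\theta)=|m(\sigma)-m(\theta)|^2+\cW_2^2(\tau_{m(\theta)-m(\sigma)}\#\sigma,\theta)$ combined with the fact that the convex order pins the barycenter of every competitor. The only cosmetic difference is that you route the computation through the fully recentered copies $\bar\mu,\bar\eta$, while the paper translates $\mu$ directly to the mean of $\eta$; your extra care in justifying $m(\eta)=m(\nu)$ and flagging the $\cJ_2$ uniqueness caveat is welcome but not structurally different.
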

\begin{proof}Let $\sigma,\theta\in\cP_2(\R^d)$ and $(\tau_{m(\theta)-m(\sigma)},I_d)(x,y)=(x+m(\theta)-m(\sigma),y)$ for $(x,y)\in\R^d\times\R^d$. By bias variance decomposition and since $\Pi(\sigma,\theta)\ni \pi\mapsto (\tau_{m(\theta)-m(\sigma)},I_d)\#\pi\in\Pi(\tau_{m(\theta)-m(\sigma)}\#\sigma,\theta)$ is a bijection, we have
\begin{align*}
   \cW^2_2(\sigma,\theta)&=\inf_{\pi\in\Pi(\sigma,\theta)}\int |x-y|^2\pi(dx,dy)\\&=\left|m(\sigma)-m(\theta)\right|^2+\inf_{\pi\in\Pi(\sigma,\theta)}\int\left|x+m(\theta)-m(\sigma)-y\right|^2\pi(dx,dy)\\&=\left|m(\sigma)-m(\theta)\right|^2+\cW^2_2(\tau_{m(\theta)-m(\sigma)}\#\sigma,\theta).
\end{align*}
Applying this equality with $(\sigma,\theta)$ equal to $(\mu,\eta)$ with $\eta\le_{cx}\nu$ so that $m(\eta)=m(\nu)$ (resp. $(\nu,\eta)$ with $\mu\le_{cx}\eta$ so that $m(\eta)=m(\mu)$), we deduce that $\mathcal{I}_2(\mu,\nu)=\mathcal{I}_2(\tau_{m(\nu)-m(\mu)}\#\mu,\nu)$ (resp. $\mathcal{J}_2(\nu,\mu)=\mathcal{J}_2(\tau_{m(\mu)-m(\nu)}\#\nu,\mu)$).
\end{proof}
In view of this lemma, it is natural to introduce the centered Wasserstein distance between two elements $\mu$ and $\nu$ of $\cP_2(\R^d)$ :
\begin{align*}
   \bW(\mu,\nu)&=\left(\cW^2_{2}(\mu,\nu)-\left|m(\mu)-m(\nu)\right|^2\right)^{1/2}\\&=\cW_2(\tau_{m(\nu)-m(\mu)}\#\mu,\nu)=\cW_2(\tau_{m(\mu)-m(\nu)}\#\nu,\mu).
\end{align*}
Strictly speaking, $\bW(\mu,\nu)$ is not a distance on $\cP_2(\R^d)$, but this is a distance on each level set $\{ \mu \in \cP_2(\R^d) : m(\mu)=c \}$, $c \in \R^d$ (or equivalently on the quotient set $\cP_2(\R^d)/\sim$ with $\mu\sim\nu \iff m(\mu)=m(\nu)$).  

\begin{proposition}\label{lipmu}
   For $\mu,\tilde\mu,\nu\in \mathcal{P}_2(\R^d)$, $\cW_2(\mathcal{I}_2(\mu,\nu)
,\mathcal{I}_2(\tmu,\nu))\le \bW(\mu,\tmu)$. 
\end{proposition}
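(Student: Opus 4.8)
The plan is to identify $\mu\mapsto\cI_2(\mu,\nu)$ with the composition of the law map and the metric projection onto a suitable closed convex subset of the Hilbert space $L^2$, and then to mimic the textbook proof that metric projections onto closed convex subsets of a Hilbert space are $1$-Lipschitz. A preliminary reduction disposes of the barycenters: by Proposition \ref{propmoyproj}, $\cI_2(\mu,\nu)=\cI_2(\mu^\circ,\nu)$ and $\cI_2(\tmu,\nu)=\cI_2(\tmu^\circ,\nu)$ where $\mu^\circ:=\tau_{m(\nu)-m(\mu)}\#\mu$ and $\tmu^\circ:=\tau_{m(\nu)-m(\tmu)}\#\tmu$ both have barycenter $m(\nu)$, so that $\cW_2(\mu^\circ,\tmu^\circ)=\bW(\mu^\circ,\tmu^\circ)$; and since the formula $\bW(\sigma,\theta)=\cW_2(\tau_{m(\theta)-m(\sigma)}\#\sigma,\theta)$ together with the invariance of $\cW_2$ under simultaneous translations shows that $\bW$ is unchanged when its arguments are translated, $\bW(\mu^\circ,\tmu^\circ)=\bW(\mu,\tmu)$. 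Hence it suffices to prove the non-expansiveness $\cW_2(\cI_2(\sigma,\nu),\cI_2(\theta,\nu))\le\cW_2(\sigma,\theta)$ for $\sigma,\theta\in\cP_2(\R^d)$.

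Fix a probability space $(\Omega,\mathcal F,\PP)$ rich enough to carry an arbitrary coupling of two elements of $\cP_2(\R^d)$ together with an independent uniform variable, write $\mathcal H:=L^2(\Omega;\R^d)$, denote by $\mathcal L(Z)\in\cP_2(\R^d)$ the law of $Z\in\mathcal H$, and set
$$\mathcal C:=\{Z\in\mathcal H:\mathcal L(Z)\le_{cx}\nu\}.$$
Applying the convex order to convex test functions and using Jensen's inequality, $\mathcal C$ is convex and contains the constant random variable $m(\nu)$; and because convergence in $\mathcal H$ forces convergence for $\cW_2$, hence weak convergence with convergence of second moments, the argument already used in the proof of Proposition \ref{propcont} (via \cite[Lemma A.1]{AlCoJo20}) shows that $\mathcal C$ is closed in $\mathcal H$. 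Thus the metric projection $P_{\mathcal C}\colon\mathcal H\to\mathcal C$ is well defined, single-valued and $1$-Lipschitz, exactly as for any nonempty closed convex subset of a Hilbert space.

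The heart of the proof is the identity, valid for every $X\in\mathcal H$,
$$\mathcal L(P_{\mathcal C}(X))=\cI_2(\mathcal L(X),\nu)\qquad\text{and}\qquad\|X-P_{\mathcal C}(X)\|_{\mathcal H}=\cW_2(\mathcal L(X),\cI_2(\mathcal L(X),\nu)).$$
For every $Z\in\mathcal C$, since $\mathcal L(X,Z)\in\Pi(\mathcal L(X),\mathcal L(Z))$ and $\mathcal L(Z)\le_{cx}\nu$, one has $\|X-Z\|_{\mathcal H}^2\ge\cW_2^2(\mathcal L(X),\mathcal L(Z))\ge\cW_2^2(\mathcal L(X),\cI_2(\mathcal L(X),\nu))$; conversely, realizing on $\Omega$ a variable $Y$ such that $(X,Y)$ is an optimal quadratic coupling of $\mathcal L(X)$ and the (unique) projection $\cI_2(\mathcal L(X),\nu)$, one gets $Y\in\mathcal C$ and $\|X-Y\|_{\mathcal H}^2=\cW_2^2(\mathcal L(X),\cI_2(\mathcal L(X),\nu))$, so $Y$ attains $\inf_{Z\in\mathcal C}\|X-Z\|_{\mathcal H}$ and therefore equals $P_{\mathcal C}(X)$.

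Granting this, the conclusion follows by copying the Hilbert-space argument: choosing $X,\tilde X\in\mathcal H$ with $\mathcal L(X)=\sigma$, $\mathcal L(\tilde X)=\theta$ and $\|X-\tilde X\|_{\mathcal H}=\cW_2(\sigma,\theta)$, the variational characterization $\langle X-P_{\mathcal C}X,\,Z-P_{\mathcal C}X\rangle_{\mathcal H}\le 0$ for all $Z\in\mathcal C$ (in particular $Z=P_{\mathcal C}\tilde X$), together with its counterpart at $\tilde X$, gives $\|P_{\mathcal C}X-P_{\mathcal C}\tilde X\|_{\mathcal H}\le\|X-\tilde X\|_{\mathcal H}$, whence
$$\cW_2(\cI_2(\sigma,\nu),\cI_2(\theta,\nu))=\cW_2\big(\mathcal L(P_{\mathcal C}X),\mathcal L(P_{\mathcal C}\tilde X)\big)\le\|P_{\mathcal C}X-P_{\mathcal C}\tilde X\|_{\mathcal H}\le\|X-\tilde X\|_{\mathcal H}=\cW_2(\sigma,\theta).$$
The hard part will be the displayed identity, i.e.\ checking that the abstract $L^2$-projection onto $\mathcal C$ coincides, at the level of laws, with the Wasserstein projection in the convex order; this rests on the convexity and closedness of $\mathcal C$ in $\mathcal H$ and on the richness of $\Omega$ used to realize the relevant optimal couplings. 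The remaining ingredients—the $1$-Lipschitz property of $P_{\mathcal C}$ and the reduction to centered measures—are routine. (One could alternatively bypass Proposition \ref{propmoyproj} by noting that every element of $\mathcal C$ has expectation $m(\nu)$, so $\mathcal C$ lies in the affine subspace $\{Z\in\mathcal H:\E Z=m(\nu)\}$, onto which projection is explicit; composing with this projection and taking $(X,\tilde X)$ optimal produces the centered distance $\bW(\mu,\tmu)$ on the right-hand side directly.)
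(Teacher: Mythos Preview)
Your proof is correct and is precisely the ``lifting to $L^2$'' argument that the paper itself sketches as an alternative in Remark~\ref{rk_lift}. The paper's main proof proceeds differently: it stays at the level of couplings, using Theorem~\ref{thm_ACJ} to write $\cI_2(\mu,\nu)=m(k_\cdot)\#\mu$ for an optimizing kernel $k\in\Pi(\mu,\nu)$, derives the first-order conditions \eqref{optik}--\eqref{optitk} by perturbing $k$ along $\varepsilon\pi+(1-\varepsilon)k$, and then expands $\int|m(k_x)-m(\tk_{\tx})|^2\chi(dx,d\tx)$ for $\chi\in\Pi(\mu,\tmu)$ to conclude via Cauchy--Schwarz and Lemma~\ref{lemma_wwp}.

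The two routes are close in spirit---both amount to the Hilbert-space projection inequality---but they buy different things. Your lifting argument is cleaner and more conceptual: closedness and convexity of $\mathcal C$ in $L^2$ are immediate, and the non-expansiveness of $P_{\mathcal C}$ is textbook. The paper's kernel computation is more hands-on but has the advantage that the optimality conditions \eqref{optik}--\eqref{optitk} it isolates are reused verbatim in the proof of Proposition~\ref{holdnu} (the H\"older continuity in $\nu$), so that machinery is not wasted. As the paper also notes in Remark~\ref{rk_lift}, the lifting approach does not transfer to $\cJ_2$, since $\{Z\in L^2:\mathcal L(Y)\le_{cx}\mathcal L(Z)\}$ need not be convex.

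One minor point worth tightening: your identity $\mathcal L(P_{\mathcal C}(X))=\cI_2(\mathcal L(X),\nu)$ relies on being able to realize, on the \emph{same} $\Omega$ where $X$ (and $\tilde X$) already live, a $Y$ with $(X,Y)$ optimally coupled. You correctly invoke an independent uniform variable for this, but it is worth making explicit that a single uniform $U$ independent of $(X,\tilde X)$ suffices to construct both $Y=P_{\mathcal C}(X)$ and $\tilde Y=P_{\mathcal C}(\tilde X)$ via conditional quantile transforms, so that the argument goes through for both simultaneously.
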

\noindent Let us note that since $m(\mathcal{I}_2(\mu,\nu))=m(\nu)=m(\mathcal{I}_2(\tmu,\nu))$, $\cW_{2}(\mathcal{I}_2(\mu,\nu),\mathcal{I}_2(\tmu,\nu))=\bW(\mathcal{I}_2(\mu,\nu),\mathcal{I}_2(\tmu,\nu))$.

When $d=1$, by Theorem 1.1 \cite{JMP23}, for $\mu,\tilde\mu,\nu,\tnu\in \mathcal{P}_2(\R)$, $\cW_2(\mathcal{I}(\tmu,\nu)
,\mathcal{I}(\tmu,\tnu))\le \cW_2(\nu,\tnu)$. With the triangle inequality and the estimation in Proposition \ref{lipmu}, we deduce the following improvement of the estimation in this Theorem without the additional factor $2$ for the first term in the right-hand side. Note that according to Example 1.3 \cite{JMP23}, this factor is optimal for the Wasserstein distance $\cW_1$.\begin{corollary}
For $\mu,\tilde\mu,\nu,\tnu\in \mathcal{P}_2(\R)$, $\cW_2(\mathcal{I}(\mu,\nu)
,\mathcal{I}(\tmu,\tnu))\le \bW(\mu,\tmu)+\cW_2(\nu,\tnu)$.
\end{corollary}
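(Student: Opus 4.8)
The plan is to insert the intermediate projection $\mathcal I(\tmu,\nu)$ and use the triangle inequality for $\cW_2$:
\[
\cW_2(\mathcal I(\mu,\nu),\mathcal I(\tmu,\tnu))\le \cW_2(\mathcal I(\mu,\nu),\mathcal I(\tmu,\nu))+\cW_2(\mathcal I(\tmu,\nu),\mathcal I(\tmu,\tnu)).
\]
For the first term on the right-hand side, I would recall the fact mentioned in the introduction that in dimension $d=1$ the projection does not depend on $\rho>1$, so that $\mathcal I(\cdot,\cdot)=\mathcal I_2(\cdot,\cdot)$, and then apply Proposition \ref{lipmu} with the common second argument $\nu$ to get $\cW_2(\mathcal I(\mu,\nu),\mathcal I(\tmu,\nu))\le \bW(\mu,\tmu)$. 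For the second term, I would invoke Theorem 1.1 of \cite{JMP23} in its $\rho=2$ form with the common first argument $\tmu$, which gives $\cW_2(\mathcal I(\tmu,\nu),\mathcal I(\tmu,\tnu))\le \cW_2(\nu,\tnu)$. Summing the two estimates yields the announced bound.

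Since both ingredients are already at our disposal — Proposition \ref{lipmu} just proved, and \cite[Theorem 1.1]{JMP23} — there is essentially no real obstacle; the only points requiring (minor) attention are bookkeeping ones: matching the notation $\mathcal I$ versus $\mathcal I_2$ via $\rho$-independence in one dimension, and quoting the $\rho=2$ instance of the one-dimensional Lipschitz estimate. I would then add, as the surrounding discussion does, the remark that $\bW(\mu,\tmu)\le \cW_2(\mu,\tmu)$, so the corollary strictly improves \cite[Theorem 1.1]{JMP23} by removing the factor $2$ in front of the first term on the right-hand side, while \cite[Example 1.3]{JMP23} shows this factor cannot be dropped if $\cW_2$ is replaced by $\cW_1$ on the left-hand side.
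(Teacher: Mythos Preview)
Your proof is correct and matches the paper's own argument exactly: the paper also inserts the intermediate point $\mathcal I(\tmu,\nu)$, bounds the first piece by Proposition \ref{lipmu} and the second by Theorem 1.1 of \cite{JMP23}, and then applies the triangle inequality. Your remarks about the $\rho$-independence in dimension one and about the improvement over \cite[Theorem 1.1]{JMP23} (with \cite[Example 1.3]{JMP23} showing optimality of the factor $2$ for $\cW_1$) also mirror the paper's surrounding discussion.
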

By Proposition 3.1 \cite{AlCoJo20}, for $\mu,\tilde\mu,\nu,\tnu\in \mathcal{P}_2(\R^d)$ such that $\mu\le_{cx}\nu$,
$\cW_2(\mu,\cI_2(\mu,\tnu))\le \cW_2(\nu,\tnu)$. With the triangle inequality and the estimation in Proposition \ref{lipmu} with $\nu$ replaced by $\tnu$, we deduce the following improvement of the estimation \eqref{lipproi} in this Proposition without the additional factor $2$ for the first term in the right-hand side.
\begin{corollary}
For $\mu,\tilde\mu,\nu,\tnu\in \mathcal{P}_2(\R^d)$ such that $\mu\le_{cx}\nu$, $\cW_2(\mu,\mathcal{I}_2(\tmu,\tnu))\le \bW(\mu,\tmu)+\cW_2(\nu,\tnu)$.
\end{corollary}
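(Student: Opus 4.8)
The plan is to combine the triangle inequality for $\cW_2$ with two facts that are already at our disposal: the non-expansivity of $\mathcal{I}_2$ in its first argument established in Proposition \ref{lipmu}, and the inequality $\cW_2(\mu,\cI_2(\mu,\tnu))\le\cW_2(\nu,\tnu)$, valid whenever $\mu\le_{cx}\nu$, which is Proposition 3.1 of \cite{AlCoJo20} (equivalently, it follows from \eqref{lipproi} applied with $\tilde\mu=\mu$, together with $\cI_2(\mu,\nu)=\mu$).

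First I would insert the intermediate measure $\mathcal{I}_2(\mu,\tnu)$ and write, by the triangle inequality for $\cW_2$,
$$\cW_2\big(\mu,\mathcal{I}_2(\tmu,\tnu)\big)\ \le\ \cW_2\big(\mu,\mathcal{I}_2(\mu,\tnu)\big)+\cW_2\big(\mathcal{I}_2(\mu,\tnu),\mathcal{I}_2(\tmu,\tnu)\big).$$
For the second term on the right, I would apply Proposition \ref{lipmu} with $\nu$ there replaced by $\tnu$, which gives $\cW_2(\mathcal{I}_2(\mu,\tnu),\mathcal{I}_2(\tmu,\tnu))\le\bW(\mu,\tmu)$. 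For the first term, I would use the hypothesis $\mu\le_{cx}\nu$: it forces $\mathcal{I}_2(\mu,\nu)=\mu$, hence $\cW_2(\mu,\mathcal{I}_2(\mu,\tnu))=\cW_2(\mathcal{I}_2(\mu,\nu),\mathcal{I}_2(\mu,\tnu))\le\cW_2(\nu,\tnu)$ by Proposition 3.1 of \cite{AlCoJo20}. Summing the two bounds yields $\cW_2(\mu,\mathcal{I}_2(\tmu,\tnu))\le\bW(\mu,\tmu)+\cW_2(\nu,\tnu)$, which is the claim.

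I do not expect any genuine obstacle here; the argument is just a bookkeeping of existing estimates, exactly as for the preceding corollary. The one point worth keeping in mind is that the estimate bounding $\cW_2(\mu,\mathcal{I}_2(\mu,\tnu))$ produces the full (uncentered) $\cW_2(\nu,\tnu)$, because the assumption $\mu\le_{cx}\nu$ gives no control on $m(\nu)-m(\tnu)$; this is precisely why the inequality is stated with $\bW$ only on the $(\mu,\tmu)$ term and with the full $\cW_2$ on the $(\nu,\tnu)$ term, and why one should not expect to replace $\cW_2(\nu,\tnu)$ by $\bW(\nu,\tnu)$.
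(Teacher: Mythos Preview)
Your argument is correct and matches the paper's own proof exactly: the paper also invokes Proposition 3.1 of \cite{AlCoJo20} (giving $\cW_2(\mu,\cI_2(\mu,\tnu))\le\cW_2(\nu,\tnu)$ from $\mu\le_{cx}\nu$), then Proposition \ref{lipmu} with $\nu$ replaced by $\tnu$, and combines them via the triangle inequality.
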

\begin{remark}
  Minimizing  the right hand side w.r.t. $\nu$ such that $\mu \le_{cx} \nu$ gives
   $$\forall \mu,\tilde\mu,\tnu\in \mathcal{P}_2(\R^d),\;\cW_2(\mu,\mathcal{I}_2(\tmu,\tnu))\le \bW(\mu,\tmu)+\cW_2(\mathcal{J}_2(\tnu,\mu),\tnu).$$
\end{remark}

To check Proposition \ref{lipmu}, we generalize the argument proving that the projection on a closed convex subset of a Hilbert space is $1$-Lipschitz. Note that while $\{\eta\in\cP_2(\R^d):\eta\le_{cx}\nu\}$ is a closed convex subset of $\cP_2(\R^d)$ endowed with the Wasserstein distance $\cW_2$, $\cP_2(\R^d)$ is not a vector space.

We first recall~\cite[Theorem 2.1]{AlCoJo17} that will be often used in this section. 
\begin{theorem}\label{thm_ACJ}
Let $\rho\ge 1$ and $\mu,\nu \in \mathcal{P}_\rho(\R^d)$. Then, we have $$W_\rho^\rho(\mu,\cI_\rho(\mu,\nu))=\min_{\pi \in \Pi(\mu,\nu)} \int_{\R^d} \left| x - \int_{\R^d} y \, \pi_x(dy) \right|^\rho \, \mu(dx).$$ 
For $\rho\ge 1$, there exists a (unique when $\rho>1$) minimizer $\pi^\star \in \Pi(\mu,\nu)$ of the right-hand side, and it satisfies $\cI_\rho(\mu,\nu)=m(\pi^\star_\cdot) \# \mu$.
\end{theorem}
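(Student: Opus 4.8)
The plan is to prove the equality of the two values by a double inequality, and then to read off the existence of a minimizer and the barycentric identity almost for free. The one genuinely nontrivial external input will be Strassen's theorem: a probability measure $\eta$ satisfies $\eta\le_{cx}\nu$ if and only if there is a coupling $M\in\Pi(\eta,\nu)$ whose disintegration against $\eta$ is a martingale kernel, i.e.\ $\int_{\R^d} z\,M_y(dz)=y$ for $\eta$-a.e.\ $y$. Throughout I write $\mathcal V_\rho^\rho(\mu,\nu)$ for the right-hand side and, for $\pi\in\Pi(\mu,\nu)$ disintegrated as $\pi(dx,dy)=\mu(dx)\pi_x(dy)$, I set $m(\pi_x)=\int_{\R^d} y\,\pi_x(dy)$ and $\eta_\pi=m(\pi_\cdot)\#\mu$.

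For the inequality $\W_\rho^\rho(\mu,\cI_\rho(\mu,\nu))\le\mathcal V_\rho^\rho(\mu,\nu)$, I would fix any $\pi\in\Pi(\mu,\nu)$ and first observe, via Jensen applied to a convex $f$, that $\int f\,d\eta_\pi=\int f(m(\pi_x))\,\mu(dx)\le\int\!\!\int f(y)\,\pi_x(dy)\,\mu(dx)=\int f\,d\nu$, so that $\eta_\pi\le_{cx}\nu$ and $\eta_\pi$ is admissible for the projection (taking $f(y)=|y|^\rho$ also shows $\eta_\pi\in\cP_\rho(\R^d)$). Coupling $\mu$ with $\eta_\pi$ through the map $x\mapsto(x,m(\pi_x))$ gives $\W_\rho^\rho(\mu,\eta_\pi)\le\int|x-m(\pi_x)|^\rho\mu(dx)$, and since $\cI_\rho(\mu,\nu)$ minimizes $\eta\mapsto\W_\rho(\mu,\eta)$ over $\{\eta\le_{cx}\nu\}$, this yields $\W_\rho^\rho(\mu,\cI_\rho(\mu,\nu))\le\int|x-m(\pi_x)|^\rho\mu(dx)$; infimizing over $\pi$ gives the claim.

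For the reverse inequality and attainment, I would use that $\cI:=\cI_\rho(\mu,\nu)$ exists and satisfies $\cI\le_{cx}\nu$. Choose a $\W_\rho$-optimal $\gamma^\star\in\Pi(\mu,\cI)$ and, by Strassen, a martingale coupling $M^\star\in\Pi(\cI,\nu)$, and set $\pi^\star=\gamma^\star M^\star\in\Pi(\mu,\nu)$ in the sense of the composition defined in the notation section. Disintegrating, $m(\pi^\star_x)=\int_y\gamma^\star_x(dy)\int_z z\,M^\star_y(dz)=\int_y y\,\gamma^\star_x(dy)$ by the martingale property, so Jensen gives $\int|x-m(\pi^\star_x)|^\rho\mu(dx)\le\int\!\!\int|x-y|^\rho\gamma^\star_x(dy)\mu(dx)=\W_\rho^\rho(\mu,\cI)$. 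Hence $\mathcal V_\rho^\rho(\mu,\nu)\le\W_\rho^\rho(\mu,\cI)$, which combined with the previous step forces equality of the two values and shows $\pi^\star$ is a minimizer of the right-hand side. Feeding $\pi^\star$ back into the first step, $\eta_{\pi^\star}\le_{cx}\nu$ and $\W_\rho^\rho(\mu,\eta_{\pi^\star})\le\mathcal V_\rho^\rho(\mu,\nu)=\W_\rho^\rho(\mu,\cI)$, so $\eta_{\pi^\star}=m(\pi^\star_\cdot)\#\mu$ is itself a projection; when $\rho>1$ the uniqueness of $\cI_\rho(\mu,\nu)$ identifies it as $\cI$, giving the stated identity.

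For the uniqueness assertion and the main obstacle, I would argue that the uniquely determined object when $\rho>1$ is the barycentric map, hence the induced projection: if $\pi_1,\pi_2$ both minimize, then $\bar\pi=\tfrac12(\pi_1+\pi_2)$ has $m(\bar\pi_x)=\tfrac12 m((\pi_1)_x)+\tfrac12 m((\pi_2)_x)$, and strict convexity of $z\mapsto|x-z|^\rho$ for $\rho>1$ makes the cost of $\bar\pi$ strictly below $\mathcal V_\rho^\rho(\mu,\nu)$ unless $m((\pi_1)_x)=m((\pi_2)_x)$ $\mu$-a.e.; since $\bar\pi$ cannot beat the infimum, the barycentric maps coincide, so $m(\pi^\star_\cdot)\#\mu$ is unique (this also re-derives uniqueness of $\cI_\rho$). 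I expect two points to need care. The essential leverage is Strassen's theorem producing $M^\star$; everything else is Jensen plus the variational definition of the projection, and notably this constructive route avoids having to prove lower semicontinuity of the barycentric cost directly. The second, more delicate point is the precise meaning of uniqueness: an elementary example with $\mu\le_{cx}\nu$ shows that the minimizing \emph{coupling} $\pi^\star$ need not be unique, since distinct martingale couplings from $\cI$ to $\nu$ yield distinct minimizers sharing the same barycentric map; thus ``unique when $\rho>1$'' is to be read as uniqueness of the barycentric pushforward $m(\pi^\star_\cdot)\#\mu=\cI_\rho(\mu,\nu)$, which is exactly what the statement records about $\pi^\star$. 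The only remaining routine item is the measurability and well-definedness of $x\mapsto m(\pi_x)$, handled by the measurable disintegration theorem.
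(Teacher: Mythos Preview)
The paper does not prove this statement: it is explicitly recalled from \cite[Theorem~2.1]{AlCoJo17} and used as a black box, so there is no proof in the paper to compare against. Your argument is therefore assessed on its own merits.

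Your route is the standard one and is correct. The two inequalities via Jensen (pushing $\pi$ forward to its barycentric image to get $\le$, and composing an optimal $\gamma^\star\in\Pi(\mu,\cI)$ with a Strassen martingale kernel $M^\star\in\Pi(\cI,\nu)$ to get $\ge$ together with attainment) are exactly how this equality is usually derived; the identification $m(\pi^\star_\cdot)\#\mu=\cI_\rho(\mu,\nu)$ for any minimizer then follows, when $\rho>1$, from the uniqueness of the projection. The strict-convexity argument for uniqueness of the barycentric map $x\mapsto m(\pi^\star_x)$ when $\rho>1$ is also correct.

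Your caveat about uniqueness is well taken and worth stating plainly. The minimizing \emph{coupling} $\pi^\star\in\Pi(\mu,\nu)$ is in general not unique even for $\rho>1$: whenever $\cI_\rho(\mu,\nu)\neq\nu$ (for instance $\mu\le_{cx}\nu$ with $\mu\neq\nu$), distinct martingale kernels from $\cI_\rho(\mu,\nu)$ to $\nu$ produce distinct minimizers with the same barycentric map. What is unique when $\rho>1$ is the map $x\mapsto m(\pi^\star_x)$ (hence the pushforward $m(\pi^\star_\cdot)\#\mu=\cI_\rho(\mu,\nu)$), which is precisely what the paper uses downstream (Lemma~\ref{lemma_wwp}, proof of Proposition~\ref{lipmu}). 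So the parenthetical ``(unique when $\rho>1$)'' in the recalled statement should be read at the level of the barycentric map rather than the full coupling, and your proof establishes exactly that.
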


\begin{lemma}\label{lemma_wwp}Let $\rho\ge 1$, $\mu,\nu,\tmu,\tnu\in\cP_\rho(\R^d)$ and $k\in\Pi(\mu,\nu)$, $\tilde{k}\in \Pi(\tmu,\tnu)$ be such that $\mathcal{I}_\rho (\mu,\nu)=m(k_\cdot)\#\mu$, $\mathcal{I}_\rho(\tmu,\tnu)=m(\tk_\cdot)\#\tmu$. Then
 \begin{equation}
   \cW_\rho(\cI_\rho(\mu,\nu),\cI_\rho(\tmu,\tnu))=\inf_{\pi\in\Pi(\mu,\tmu)}\left(\int |m(k_x)-m({\tk}_{\tx})|^\rho\pi(dx,d\tx)\right)^{1/\rho}.\label{wwp}
 \end{equation}
\end{lemma}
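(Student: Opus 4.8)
The plan is to prove the two inequalities between the left- and right-hand sides of \eqref{wwp} separately.

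For the inequality $\cW_\rho(\cI_\rho(\mu,\nu),\cI_\rho(\tmu,\tnu))\le\inf_{\pi\in\Pi(\mu,\tmu)}\left(\int|m(k_x)-m(\tk_{\tx})|^\rho\pi(dx,d\tx)\right)^{1/\rho}$, I would fix an arbitrary coupling $\pi\in\Pi(\mu,\tmu)$ and push it forward by the map $(x,\tx)\mapsto(m(k_x),m(\tk_{\tx}))$. Since $\cI_\rho(\mu,\nu)=m(k_\cdot)\#\mu$ and $\cI_\rho(\tmu,\tnu)=m(\tk_\cdot)\#\tmu$, this pushforward is a coupling of $\cI_\rho(\mu,\nu)$ and $\cI_\rho(\tmu,\tnu)$, and its cost is exactly $\int|m(k_x)-m(\tk_{\tx})|^\rho\pi(dx,d\tx)$. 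Taking the infimum over $\pi$ and using the definition of $\cW_\rho$ gives this direction; this part is routine.

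The reverse inequality $\cW_\rho(\cI_\rho(\mu,\nu),\cI_\rho(\tmu,\tnu))\ge\inf_{\pi\in\Pi(\mu,\tmu)}\left(\int|m(k_x)-m(\tk_{\tx})|^\rho\pi(dx,d\tx)\right)^{1/\rho}$ is the substantive point. Here I would start from an optimal coupling $\chi\in\Pi(\cI_\rho(\mu,\nu),\cI_\rho(\tmu,\tnu))$ for $\cW_\rho$. The idea is to ``lift'' $\chi$ back through the maps $x\mapsto m(k_x)$ and $\tx\mapsto m(\tk_{\tx})$ to produce a coupling $\pi$ of $\mu$ and $\tmu$. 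Concretely, write $\mu(dx)=(m(k_\cdot)\#\mu)(dz)\,\mu_z(dx)$ for a disintegration $\mu_z$ of $\mu$ over its image $\cI_\rho(\mu,\nu)$ under $z=m(k_x)$, and similarly $\tmu(d\tx)=\cI_\rho(\tmu,\tnu)(d\tz)\,\tmu_{\tz}(d\tx)$; then set $\pi(dx,d\tx)=\int\chi(dz,d\tz)\,\mu_z(dx)\,\tmu_{\tz}(d\tx)$. By construction $\pi\in\Pi(\mu,\tmu)$, and $\chi$ is recovered as the image of $\pi$ under $(x,\tx)\mapsto(m(k_x),m(\tk_{\tx}))$ because $\mu_z$ is supported (for $\cI_\rho(\mu,\nu)$-a.e.\ $z$) on $\{x:m(k_x)=z\}$ and similarly for $\tmu_{\tz}$. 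Hence $\int|m(k_x)-m(\tk_{\tx})|^\rho\pi(dx,d\tx)=\int|z-\tz|^\rho\chi(dz,d\tz)=\cW_\rho^\rho(\cI_\rho(\mu,\nu),\cI_\rho(\tmu,\tnu))$, and this particular $\pi$ witnesses the claimed inequality.

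I expect the main obstacle to be the measurability and disintegration bookkeeping in the reverse direction: one must argue that the regular conditional laws $\mu_z$ and $\tmu_{\tz}$ exist (which holds on Polish spaces), that they concentrate on the correct fibers $\{m(k_\cdot)=z\}$ up to null sets, and that the resulting $\pi$ is a genuine probability measure with the right marginals — this uses the disintegration theorem and a Fubini-type argument for the marginal check. Everything else (the pushforward computation, the change of variables, and combining the two inequalities into the equality \eqref{wwp}) is elementary. Note also that the lemma is stated for arbitrary $k,\tk$ with the stated pushforward property, not merely the optimal ones from Theorem \ref{thm_ACJ}, so no optimality of $k$ or $\tk$ is used — only that they realize $\cI_\rho(\mu,\nu)$ and $\cI_\rho(\tmu,\tnu)$ as the laws of the barycentric maps.
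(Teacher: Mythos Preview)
Your proposal is correct and follows essentially the same argument as the paper: both directions are proved exactly as you describe, with the reverse inequality obtained by disintegrating $\mu$ and $\tmu$ over the barycentric maps and gluing via an optimal coupling of $\cI_\rho(\mu,\nu)$ and $\cI_\rho(\tmu,\tnu)$. The paper's notation differs slightly (it writes the disintegration kernels as $\chi_y$, $\tchi_{\ty}$ rather than $\mu_z$, $\tmu_{\tz}$) but the construction and the verification that the fibers carry the correct value of the barycentric map are identical.
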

\begin{proof}
  The existence of $k$ and $\tilde{k}$ is given by Theorem~\ref{thm_ACJ}. Since for $\pi \in \Pi(\mu,\tmu)$, the image of $\pi$ by $\R^{d}\times\R^d\ni(x,\tx)\mapsto (m(k_x),m(\tk_{\tx}))$ belong to $\Pi(\cI_\rho(\mu,\nu),\cI_\rho(\tmu,\tnu))$, the left-hand side in \eqref{wwp} is not greater the the right-hand side. To prove the converse inequality, we choose $\hat\pi_\star\in\Pi(\cI_\rho(\mu,\nu),\cI_\rho(\tmu,\tnu))$ as an optimal $\cW_\rho$ coupling between $\cI_\rho(\mu,\nu)$ and $\cI_\rho(\tmu,\tnu)$. We have $\mu(dx)\delta_{m(k_x)}(dy)=\cI_\rho(\mu,\nu)(dy)\chi_y(dx)$ and $\tmu(d\tx)\delta_{m(\tk_{\tx})}(d\ty)=\cI_\rho(\tmu,\tnu)(d\ty)\tchi_{\ty}(d\tx)$ for some Markov kernels $\chi_\cdot(\cdot)$ and $\tchi_\cdot(\cdot)$. Then $\pi_\star(dx,d\tx):=\int_{(y,\ty)\in\R^d\times\R^d}\chi_y(dx)\tchi_{\ty}(d\tx)\hat\pi_\star(dy,d\ty)\in\Pi(\mu,\tmu)$ and since $$\int_{(\tx,\ty)\in\R^d\times\R^d}\chi_y(dx)\tchi_{\ty}(d\tx)\hat\pi_\star(dy,d\ty)=\mu(dx)\delta_{m(k_x)}(dy),$$ $\chi_y(dx)\tchi_{\ty}(d\tx)\hat\pi_\star(dy,d\ty)$ a.e., $m(k_x)=y$ and, in a symmetric way, $m(\tk_{\tx})=\ty$. Therefore
  \begin{align*}
   \int_{(x,\tx)\in\R^{2d}}|m(k_x)-m(\tk_{\tx})|^\rho\pi_\star(dx,d\tx)&=\int_{(x,\tx,y,\ty)\in\R^{4d}}|y-\ty|^\rho\chi_y(dx)\tchi_{\ty}(d\tx)\hat\pi_\star(dy,d\ty)\\&=\cW_\rho^\rho(\cI_\rho(\mu,\nu),\cI_\rho(\tmu,\tnu)).
  \end{align*}
\end{proof}

\begin{proof}[Proof of Proposition \ref{lipmu}]
Let $k(dx,dy)=\mu(dx)k_x(dy)\in\Pi(\mu,\nu)$ (resp. $\tilde{k}(d\tx,dy)=\tmu(d\tx)\tk_{\tx}(dy)\in \Pi(\tmu,\nu)$) such that $\mathcal{I}_2(\mu,\nu)=m(k_\cdot)\#\mu$ (resp. $\mathcal{I}_2(\tmu,\nu)=m({\tilde k}_\cdot)\#\tmu$) by given by Theorem~\ref{thm_ACJ}. The kernel $k$ minimizes $\int |x-m(\pi_x)|^2\mu(dx)$ over $\pi\in\Pi(\mu,\nu)$ and $\cW_2^2(\mu,\cI(\mu,\nu))=\int|x-m(k_x)|^2\mu(dx)$. 
  For $\pi\in\Pi(\mu,\nu)$, we have $\varepsilon \pi +(1-\varepsilon)k\in\Pi(\mu,\nu)$ and $m((\varepsilon \pi +(1-\varepsilon)k)_x)=\varepsilon m(\pi_x)+(1-\varepsilon)m(k_x)$ for each $\varepsilon\in[0,1]$ and $x\in\R$. Moreover,
  \begin{align*}
   |x-m(k_x)|^2-|x-m((\varepsilon \pi +(1-\varepsilon)k)_x)|^2=2\varepsilon\langle m(k_x)-m(\pi_x),m(k_x)-x\rangle-\varepsilon^2|m(k_x)-m(\pi_x)|^2.
  \end{align*}
Therefore, the inequality $\int |x-m(k_x)|^2-|x-m((\varepsilon \pi +(1-\varepsilon)k)_x)|^2\mu(dx)\le 0$ implies in the limit $\varepsilon\to 0$ that
\begin{equation}
   \forall \pi\in\Pi(\mu,\nu),\;\int \langle m(k_x)-m(\pi_x),m(k_x)-x\rangle \mu(dx)\le 0.\label{optik}
\end{equation}
In the same way,
\begin{equation}
   \forall \tpi\in\Pi(\tmu,\nu),\;\int \langle m(\tk_{\tx})-m(\tpi_{\tx}),m(\tk_{\tx})-\tx\rangle \tmu(d\tx)\le 0.\label{optitk}
\end{equation}
 Let now $\chi\in\Pi(\mu,\tmu)$ and $\tchi$ denotes its image by $(x,\tx)\mapsto (\tx,x)$. We have
 \begin{align*}
  \int |m(k_x)-m(\tk_{\tx})|^2\chi(dx,d\tx)=&\int \langle m(k_x)-m(\tk_{\tx}),m(k_x)-x+\tx-m(\tk_{\tx})+x-\tx \rangle \chi(dx,d\tx)\\
  =&
  \int\langle m(k_x)- m({\chi\tk}_x), m(k_x)-x\rangle\mu(dx)\\&+\int\langle m(\tk_{\tx})- m(\tchi k_{\tx}), m(\tk_{\tx})-\tx\rangle\tmu(d\tx)\\&+\int\langle m(k_x)-m(\tk_{\tx}), x-\tx\rangle\chi(dx,d\tx).
\end{align*}
Since $\chi\tk\in\Pi(\mu,\nu)$ and $\tchi k\in\Pi(\tmu,\nu)$, the first two terms in the right-hand side are non-positive according to \eqref{optik} and \eqref{optitk} respectively.
Applying Cauchy-Schwarz inequality to the third term, and choosing $\chi$ as an optimal $\cW_2$ coupling $\chi_\star$ between $\mu$ and $\tmu$, we deduce that
$$\int |m(k_x)-m(\tk_{\tx})|^2\chi_\star(dx,d\tx)\le \left(\int |m(k_x)-m(\tk_{\tx})|^2\chi_\star(dx,d\tx)\right)^{1/2} \cW_2(\mu,\tmu),$$
so that $\int |m(k_x)-m(\tk_{\tx})|^2\chi_\star(dx,d\tx)\le \cW^2_2(\mu,\tmu)$.
With Lemma~\ref{lemma_wwp}, we deduce that $$\cW_2(\cI_2(\mu,\nu),\cI_2(\tmu,\nu))\le\cW_2(\mu,\tmu).$$ Since, by Proposition \ref{propmoyproj}, $\cI_2(\mu,\nu)=\cI_2(\tau_{m(\nu)-m(\mu)}\#\mu,\nu)$ and $\cI_2(\tmu,\nu)=\cI_2(\tau_{m(\nu)-m(\tmu)}\#\tmu,\nu)$, replacing $(\mu,\tmu)$ by $(\tau_{m(\nu)-m(\mu)}\#\mu,\tau_{m(\nu)-m(\tmu)}\#\tmu)$, we conclude that $$\cW_2(\cI_2(\mu,\nu),\cI_2(\tmu,\nu))\le\cW_2(\tau_{m(\nu)-m(\mu)}\#\mu,\tau_{m(\nu)-m(\tmu)}\#\tmu)=\bW(\mu,\tmu).$$
  \end{proof}

  \begin{remark}\label{rk_lift}
    An alternative proof for Proposition~\ref{lipmu} would be to lift on an atomless probability space $(\Omega,\mathcal{F},\PP)$ and to define, for $X,Y\in L^2(\Omega,\R^d)$, ${\rm I}_2(X,Y)$ as the $L^2$ projection of~$X$ on the closed convex set $\{Z \in L^2(\Omega,\R^d): {\mathcal L}(Z)\le_{cx} {\mathcal L}(Y) \}$. One easily checks that for $X\sim\mu\in\cP_2(\R^d)$ and $Y\sim\nu\in\cP_2(\R^d)$,  $\mathcal{L}({\rm I}_2(X,Y))=\cI_2(\mu,\nu)$, since $\cI_2(\mu,\nu)$ is the unique minimizer of $\cW_2(\mu,\eta)$, $\eta \le_{cx} \nu$. Then, the one-Lipschitz property of $L^2$-projections on closed convex sets gives 
    $$\E[|{\rm I}_2(X,Y)-{\rm I}_2(X',Y)|^2]^{1/2}\le  \E[|X-X'|^2]^{1/2}.$$
    We also observe that ${\rm I}_2(X+c,Y)={\rm I}_2(X,Y)$ for any $c\in \R^d$, so that  ${\rm I}_2(X,Y)={\rm I}_2(X-\E[X],Y)$. Combined with the previous inequality, we get precisely Proposition~\ref{lipmu}. 

    It is interesting to note that $\{Z \in L^2(\Omega,\R^d): {\mathcal L}(Y)\le_{cx} {\mathcal L}(Z) \}$ may not be convex, so that there may be different random variables minimizing $\E[|X-Z|^2]$ in this closed set. Thus, it is not clear how to get an analogous result for $\cJ_2(\nu,\mu)$.  
  \end{remark}

\begin{proposition}\label{holdnu}
   For $\mu,\nu,\tnu\in \mathcal{P}_2(\R^d)$, \begin{equation}
      \bW^2(\mathcal{I}_2(\mu,\nu)
,\mathcal{I}_2(\mu,\tnu))\le \left(\bW(\mu,\mathcal{I}_2(\mu,\nu))+\bW(\mu,\mathcal{I}_2(\mu,\tnu))\right)\bW(\nu,\tnu).\label{regnu}
   \end{equation}
 \end{proposition}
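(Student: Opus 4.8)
The plan is to run the Hilbert-space projection argument used for Proposition~\ref{lipmu}, now perturbing the target $\nu$ instead of the source $\mu$, while carefully bookkeeping barycenters so that the bias terms disappear exactly when one passes from $\cW_2$ to $\bW$. First I would take, via Theorem~\ref{thm_ACJ}, the minimizer $k\in\Pi(\mu,\nu)$ of $\pi\mapsto\int|x-m(\pi_x)|^2\mu(dx)$, so that $\cW_2^2(\mu,\cI_2(\mu,\nu))=\int|x-m(k_x)|^2\mu(dx)$ and $\cI_2(\mu,\nu)=m(k_\cdot)\#\mu$, and the analogous minimizer $\tk\in\Pi(\mu,\tnu)$. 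Since $x\mapsto(m(k_x),m(\tk_x))$ pushes $\mu$ forward to a coupling of $\cI_2(\mu,\nu)$ and $\cI_2(\mu,\tnu)$, we get $\cW_2^2(\cI_2(\mu,\nu),\cI_2(\mu,\tnu))\le\int|m(k_x)-m(\tk_x)|^2\mu(dx)$, so it suffices to bound this integral. Arguing exactly as in the derivation of \eqref{optik}, the optimality of $k$ (resp.\ $\tk$) gives the variational inequalities $\int\langle m(k_x)-m(\pi_x),m(k_x)-x\rangle\mu(dx)\le 0$ for all $\pi\in\Pi(\mu,\nu)$ and $\int\langle m(\tk_x)-m(\tpi_x),m(\tk_x)-x\rangle\mu(dx)\le 0$ for all $\tpi\in\Pi(\mu,\tnu)$.

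Next I would write $\int|m(k_x)-m(\tk_x)|^2\mu(dx)=(I)-(II)$, where $(I):=\int\langle m(k_x)-m(\tk_x),m(k_x)-x\rangle\mu(dx)$ and $(II):=\int\langle m(k_x)-m(\tk_x),m(\tk_x)-x\rangle\mu(dx)$, fix an optimal $\cW_2$-coupling $\gamma\in\Pi(\nu,\tnu)$ and let $\gamma'\in\Pi(\tnu,\nu)$ be its image under $(y,z)\mapsto(z,y)$. Testing the first variational inequality with $\pi=\tk\gamma'\in\Pi(\mu,\nu)$ gives $(I)\le\int\langle v_x,m(k_x)-x\rangle\mu(dx)$ with $v_x:=m((\tk\gamma')_x)-m(\tk_x)=\int(m(\gamma'_z)-z)\,\tk_x(dz)$, and testing the second with $\tpi=k\gamma\in\Pi(\mu,\tnu)$ gives $-(II)\le\int\langle w_x,m(\tk_x)-x\rangle\mu(dx)$ with $w_x:=m((k\gamma)_x)-m(k_x)=\int(m(\gamma_y)-y)\,k_x(dy)$. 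Writing $u_x:=m(k_x)-x$, $s_x:=m(\tk_x)-x$, and subtracting the $\mu$-averages $\bar u=m(\nu)-m(\mu)$, $\bar v=m(\nu)-m(\tnu)$, $\bar w=m(\tnu)-m(\nu)$, $\bar s=m(\tnu)-m(\mu)$, Cauchy--Schwarz in $L^2(\mu)$ yields
\[
(I)\le\|v-\bar v\|_{L^2(\mu)}\,\|u-\bar u\|_{L^2(\mu)}+\langle\bar v,\bar u\rangle,\qquad -(II)\le\|w-\bar w\|_{L^2(\mu)}\,\|s-\bar s\|_{L^2(\mu)}+\langle\bar w,\bar s\rangle.
\]

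To conclude I would identify the four $L^2$-norms. Since $\int|u_x|^2\mu(dx)=\cW_2^2(\mu,\cI_2(\mu,\nu))$ and $m(\cI_2(\mu,\nu))=m(\nu)$, we get $\|u-\bar u\|_{L^2(\mu)}^2=\cW_2^2(\mu,\cI_2(\mu,\nu))-|m(\mu)-m(\nu)|^2=\bW^2(\mu,\cI_2(\mu,\nu))$, and likewise $\|s-\bar s\|_{L^2(\mu)}^2=\bW^2(\mu,\cI_2(\mu,\tnu))$. For $v$, two applications of Jensen's inequality (first to $\tk_x$, then to $\gamma'_z$) together with $\int\tk_x(dz)\,\mu(dx)=\tnu(dz)$ and the optimality of $\gamma$ give $\int|v_x|^2\mu(dx)\le\int|y-z|^2\gamma'(dz,dy)=\cW_2^2(\nu,\tnu)$, hence $\|v-\bar v\|_{L^2(\mu)}^2\le\cW_2^2(\nu,\tnu)-|m(\nu)-m(\tnu)|^2=\bW^2(\nu,\tnu)$, and symmetrically $\|w-\bar w\|_{L^2(\mu)}^2\le\bW^2(\nu,\tnu)$. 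Collecting the pieces,
\[
\int|m(k_x)-m(\tk_x)|^2\mu(dx)\le\big(\bW(\mu,\cI_2(\mu,\nu))+\bW(\mu,\cI_2(\mu,\tnu))\big)\bW(\nu,\tnu)+\langle\bar v,\bar u\rangle+\langle\bar w,\bar s\rangle,
\]
and the residual bias telescopes: $\langle\bar v,\bar u\rangle+\langle\bar w,\bar s\rangle=\langle m(\nu)-m(\tnu),(m(\nu)-m(\mu))-(m(\tnu)-m(\mu))\rangle=|m(\nu)-m(\tnu)|^2$. Since $m(\cI_2(\mu,\nu))=m(\nu)$ and $m(\cI_2(\mu,\tnu))=m(\tnu)$, we have $\bW^2(\cI_2(\mu,\nu),\cI_2(\mu,\tnu))=\cW_2^2(\cI_2(\mu,\nu),\cI_2(\mu,\tnu))-|m(\nu)-m(\tnu)|^2$, so subtracting $|m(\nu)-m(\tnu)|^2$ from the last display yields exactly \eqref{regnu}. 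The delicate point is the barycenter bookkeeping in the two Cauchy--Schwarz steps: centering \emph{both} factors is precisely what upgrades $\cW_2$ to $\bW$, and one must check that the leftover inner products recombine to exactly $|m(\nu)-m(\tnu)|^2$ so that this term is absorbed by the passage from $\cW_2^2$ to $\bW^2$ on the left-hand side; everything else is routine.
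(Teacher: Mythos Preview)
Your proof is correct and follows essentially the same approach as the paper: both use the variational inequality \eqref{optik} tested against the composed kernels $\tk\gamma'$ and $k\gamma$ (the paper's $\tk\tsigma$ and $k\sigma$), then Cauchy--Schwarz and two applications of Jensen to extract the $\bW(\nu,\tnu)$ factor. The only cosmetic difference is that the paper centers up front by expanding $\int|m(k_x)-m(\nu)-m(\tk_x)+m(\tnu)|^2\mu(dx)$ directly, whereas you work with the uncentered $\int|m(k_x)-m(\tk_x)|^2\mu(dx)$ and verify at the end that the residual bias terms telescope to exactly $|m(\nu)-m(\tnu)|^2$; the mathematical content is identical.
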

\begin{remark}
  Let $\mu,\nu,\tnu\in \mathcal{P}_2(\R^d)$ be such that $\mu\le_{cx}\nu$. Then $\mathcal{I}_2(\mu,\nu)=\mu$ so that the estimation \eqref{regnu} implies that $\bW(\mu,\mathcal{I}_2(\mu,\tnu))\le \bW(\nu,\tnu)$. Since $\mu$ and $\nu$ (resp. $\mathcal{I}_2(\mu,\tnu)$ and $\tnu$) share the same expectation, we deduce that $\cW_{2}(\mu,\mathcal{I}_2(\mu,\tnu))\le \cW_{2}(\nu,\tnu)$ which also is a consequence of Proposition 3.1 \cite{AlCoJo20}. 
\end{remark}
\begin{proof}
Let $k(dx,dy)=\mu(dx)k_x(dy)\in\Pi(\mu,\nu)$ (resp. $\tilde{k}(dx,d\ty)=\mu(dx)\tk_{x}(d\ty)\in \Pi(\mu,\tnu)$) such that $\mathcal{I}_2(\mu,\nu)=m(k_\cdot)\#\mu$ (resp. $\mathcal{I}_2(\mu,\tnu)=m({\tilde k}_\cdot)\#\mu$) be given by Theorem~\ref{thm_ACJ}.
   Let also $\sigma\in\Pi(\nu,\tnu)$ by optimal for $\cW_2(\nu,\tnu)$ and $\tilde{\sigma}$ denote its image by $(y,\ty)\mapsto (\ty,y)$. Using $\int m(k_x)\mu(dx)=m(\nu)$ and $\int m(\tk_x)\mu(dx)=m(\tnu)$, we have 
   \begingroup
  \allowdisplaybreaks 
   \begin{align}
     \int|m(k_x)-m(\nu)&-m(\tk_{x})+m(\tnu)|^2\mu(dx)\notag\\=& \int\langle m(k_x)-m(\tk_{x}),m(k_x)-m(\nu)-m(\tk_{x})+m(\tnu)\rangle\mu(dx)\notag\\
     =& \int\langle m(k_x)-m(\tk_{x}),m(k_x)-m(\nu)-x+m(\mu)\rangle\mu(dx)\notag\\&+ \int\langle m(k_x)-m(\tk_{x}),x-m(\mu)-m(\tk_{x})+m(\tnu)\rangle\mu(dx)\notag\\
     =&\int\langle m(k_x)-m({\tk\tilde\sigma}_x),m(k_x)-m(\nu)-x+m(\mu)\rangle\mu(dx)\notag\\
     &+\int\langle m({\tk\tilde\sigma}_x)-m({\tk}_x),m(k_x)-m(\nu)-x+m(\mu)\rangle\mu(dx)\notag\\
     &
     +\int\langle m(k_x)-m({k\sigma}_x),x-m(\mu)-m({\tk}_x)+m(\tnu)\rangle\mu(dx)\notag\\
     &
       +\int\langle m({k\sigma}_x)-m({\tk}_x),x-m(\mu)-m({\tk}_x)+m(\tnu)\rangle\mu(dx).\label{majoprojnu}
\end{align}
\endgroup
Since $\int m(k_x)\mu(dx)=m(\nu)=\int m({\tk\tilde\sigma}_x) \mu(dx)$, the first term in the right-hand side is equal to $\int\langle m(k_x)-m({\tk\tilde\sigma}_x),m(k_x)-x\rangle\mu(dx)$ and is non-positive by \eqref{optik} applied with $\pi=\tk\tilde\sigma$. In the same way, the fourth term in the right-hand side is non-positive. Since $\int (m(k_x)-m(\nu)-x+m(\mu))\mu(dx)=0$, the second term is equal to
$\int\langle m({\tk\tilde\sigma}_x)-m(\nu)-m({\tk}_x)+m(\tnu),m(k_x)-m(\nu)-x+m(\mu)\rangle\mu(dx)$ and, by Cauchy-Schwarz inequality, it is bounded from above by $$\left(\int|m({\tk\tilde\sigma}_x)-m(\nu)-m({\tk}_x)+m(\tnu)|^2\mu(dx)\right)^{1/2}\bW(\mu,\mathcal{I}_2(\mu,\nu)).$$
Using that $m({\tk\tsigma}_x)=\int_{\ty\in\R^d}m(\tsigma_y)\tk_x(d\ty)$ and applying twice Jensen's inequality, we get
\begingroup
  \allowdisplaybreaks 
     \begin{align*}
   \int|m({\tk\tsigma}_x)&-m(\nu)-m({\tk}_x)+m(\tnu)|^2\mu(dx)\\=&\int_{x\in\R^d}\left|\int_{\ty\in\R^d} (m(\tsigma_{\ty})-m(\nu)-\ty+m(\tnu))\tk_x(d\ty)\right|^2\mu(dx)\\&\le \int_{(x,\ty)\in\R^{2d}}   \left|m(\tsigma_{\ty})-m(\nu)-\ty+m(\tnu)\right|^2\tk(dx,d\ty)\\
     &=\int_{\ty\in\R^{d}}\left|\int_{y\in\R^d}(y-m(\nu)-\ty+m(\tnu))\tsigma_{\ty}(dy)\right|^2\tnu(d\ty)\\&\le \int_{(y,\ty)\in\R^{2d}}|y-m(\nu)-\ty+m(\tnu)|^2\tsigma(d\ty,dy)=\bW^2(\nu,\tnu).\end{align*}
     \endgroup
   Therefore $\int\langle m({\tk\tilde\sigma}_x)-m({\tk}_x),m(k_x)-m(\nu)-x+m(\mu)\rangle\mu(dx)\le\bW(\nu,\tnu)\bW(\mu,\mathcal{I}_2(\mu,\nu))$. In the same way, $\int\langle m(k_x)-m({k\sigma}_x),x-m(\mu)-m({\tk}_x)+m(\tnu)\rangle\mu(dx) \le\bW(\nu,\tnu)\bW(\mu,\mathcal{I}_2(\mu,\nu))$.
   Plugging the upper-bounds for the four terms in the right-hand side of \eqref{majoprojnu} in this equation, we deduce that
$$\int|m(k_x)-m(\nu)-m(\tk_{x})+m(\tnu)|^2\mu(dx)\le \left(\bW(\mu,\mathcal{I}_2(\mu,\nu))+\bW(\mu,\mathcal{I}_2(\mu,\nu))\right)\bW(\nu,\tnu).$$

  We conclude with the estimation \begin{align*}
   \bW^2(\mathcal{I}_2(\mu,\nu)
                                    ,\mathcal{I}_2(\mu,\tnu))&=\cW^2(\mathcal{I}_2(\mu,\nu)
  ,\mathcal{I}_2(\mu,\tnu))-|m(\nu)-m(\tnu)|^2\\&\le \int|m(k_x)-m({\tk}_x)|^2\mu(dx)-|m(\nu)-m(\tnu)|^2\\&=\int|m(k_x)-m(\nu)-m({\tk}_x)+m(\tnu)|^2\mu(dx)
  \end{align*} deduced from \eqref{wwp} for the coupling $\pi(dx,d\tx)=\mu(dx)\delta_{x}(d\tx)\in\Pi(\mu,\mu)$.
\end{proof}

\section{Projections of Gaussian distributions}

Let $\mu,\nu\in\cP_2(\R^d)$ with expectations $m_\mu,m_\nu\in\R^d$ and covariance matrices $\Sigma_\mu,\Sigma_\nu\in\cS^+_d$. For any coupling $\pi$ between $\mu$ and $\nu$, the covariance matrix of $\pi$ writes $\left(\begin{array}{cc}\Sigma_\mu & \Theta_\pi\\\Theta_\pi^* & \Sigma_{\nu}\end{array}\right)\in\cS^+_{2d}$ for some $\Theta_\pi\in\R^{d\times d}$ and, by bias variance decomposition,
\begin{align}
  \int_{\R^d\times \R^d}&|x-y|^2\pi(dx,dy)=|m_\mu-m_\nu|^2+\int_{\R^d\times \R^d}|(x-m_\mu)-(y-m_\nu)|^2\pi(dx,dy)\notag\\&=|m_\mu-m_\nu|^2+\int_{\R^d\times \R^d}|x-m_\mu|^2+|y-m_\nu)|^2-2(x-m_\mu)^*(y-m_\nu)\pi(dx,dy)\notag\\&=|m_\mu-m_\nu|^2+\tr(\Sigma_\mu+\Sigma_\nu-2\Theta_\pi).\label{w22}\end{align}
As a consequence, minimizing $\int_{\R^d\times \R^d}|x-y|^2\pi(dx,dy)$ with respect to $\pi\in\Pi(\mu,\nu)$ amounts to maximizing $\tr(\Theta_\pi)$. 
For each $\Theta\in\cS^+_d$ such that $\Gamma_\Theta:=\left(\begin{array}{cc}\Sigma_\mu & \Theta\\\Theta^* & \Sigma_{\nu}\end{array}\right)\in\cS^+_{2d}$, $\cN_{2d}\left(\left(\begin{array}{c}m_\mu\\ m_\nu\end{array}\right),\Gamma_\Theta\right)$ is a coupling between $\cN_d(m_\mu,\Sigma_\mu)$ and $\cN_d(m_\nu,\Sigma_\nu)$ so that
\begin{align*}
  \cW_2^2\left(\cN_d(m_\mu,\Sigma_\mu),\cN_d(m_\nu,\Sigma_\nu)\right)&=|m_\mu-m_\nu|^2+\tr(\Sigma_\mu+\Sigma_\nu)-2\sup_{\Theta\in\R^{d\times d}:\Gamma_\Theta\in\cS^+_d}\tr(\Theta)
\end{align*}
This equality was exploited by \cite{Dowlan,GiSho,Olpuk} to derive
\begin{equation}
   \cW_2^2\left(\cN_d(m_\mu,\Sigma_\mu),\cN_d(m_\nu,\Sigma_\nu)\right)=|m_\mu-m_\nu|^2+\tr\left(\Sigma_\mu+\Sigma_\nu-2(\Sigma_\mu^{1/2}\Sigma_\nu\Sigma_\mu^{1/2})^{1/2}\right)\label{w2gauss}
\end{equation}
with optimal $\Theta$ equal to $\Sigma_\mu^{1/2}(\Sigma_\mu^{1/2}\Sigma_\nu\Sigma_\mu^{1/2})^{1/2}\Sigma_\mu^{-1/2}$ (resp. $\Sigma_\nu^{-1/2}(\Sigma_\nu^{1/2}\Sigma_\mu\Sigma_\nu^{1/2})^{1/2}\Sigma_\nu^{1/2}$) when $\Sigma_\mu$ is positive definite (resp. $\Sigma_\nu$ is positive definite). Of course $\tr\left((\Sigma_\mu^{1/2}\Sigma_\nu\Sigma_\mu^{1/2})^{1/2}\right)= \tr\left((\Sigma_\nu^{1/2}\Sigma_\mu\Sigma_\nu^{1/2})^{1/2}\right)$ for all $\Sigma_\mu,\Sigma_\nu\in\cS^+_d$.\\Since $\{\Theta_\pi\in\R^{d\times d}:\pi\mbox{ coupling between }\mu\mbox{ and }\nu\}\subset\{\Theta\in\R^{d\times d}:\Gamma_\Theta\in\cS^+_d\}$, one also has
\begin{equation}
   \cW_2^2(\mu,\nu)\ge |m_\mu-m_\nu|^2+\tr\left(\Sigma_\mu+\Sigma_\nu-2(\Sigma_\mu^{1/2}\Sigma_\nu\Sigma_\mu^{1/2})^{1/2}\right)\label{minow2gaus}.\end{equation}

\subsection{{Existence of Gaussian projections}}
\begin{proposition}\label{prop_projG}
  Let $m_\mu,m_\nu\in\R^d$, $\Sigma_\mu,\Sigma_\nu\in\cS^+_d$. Then $\cI_2(\cN_d(m_\mu,\Sigma_\mu),\cN_d(m_\nu,\Sigma_\nu))=\cN_d(m_\nu,\Sigma_{\cI})$ where $\Sigma_{\cI}\in\cS^+_d$ is the unique solution of $\inf_{\Sigma\in\cS^+_d:\Sigma\le\Sigma_\nu}{\rm tr}\left(\Sigma-2(\Sigma_\mu^{1/2}\Sigma\Sigma_\mu^{1/2})^{1/2}\right)$. Any $\cW_2$-projection of $\cN_d(m_\nu,\Sigma_\nu)$ on the set of probability measures dominating $\cN_d(m_\mu,\Sigma_\mu)$ has mean $m_\mu$ and a covariance matrix solving $\inf_{\Sigma\in\cS^+_d:\Sigma_\mu\le\Sigma}{\rm tr}\left(\Sigma-2(\Sigma_\nu^{1/2}\Sigma\Sigma_\nu^{1/2})^{1/2}\right)$, and for any solution $\Sigma_\cJ$, $\cN_d(m_\mu,\Sigma_\cJ)$ is such a projection. This is the unique projection when $d=1$ or $\Sigma_\nu$ is positive definite and uniqueness of the solutions $\Sigma_\cJ$ holds in these cases. Moreover, for all $\mu,\nu\in\cP_2(\R^d)$ with respective expectations $m_\mu,m_\nu$ and covariance matrices $\Sigma_\mu,\Sigma_\nu$,
\begin{align*}
   \cW_2^2(\mu,\cI_2(\mu,\nu))&\ge |m_\mu-m_\nu|^2+\inf_{\Sigma\in\cS^+_d:\Sigma\le\Sigma_\nu}{\rm tr}\left(\Sigma_\mu+\Sigma-2(\Sigma_\mu^{1/2}\Sigma\Sigma_\mu^{1/2})^{1/2}\right)
  \\\inf_{\eta\in\cP_2(\R^d):\mu\le_{cx}\eta}\cW^2_2(\nu,\eta)&\ge |m_\mu-m_\nu|^2+ \inf_{\Sigma\in\cS^+_d:\Sigma_\mu\le\Sigma}{\rm tr}\left(\Sigma_\nu+\Sigma-2(\Sigma_\nu^{1/2}\Sigma\Sigma_\nu^{1/2})^{1/2}\right).
\end{align*}
\end{proposition}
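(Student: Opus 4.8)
The plan is to combine the explicit Gaussian formula \eqref{w2gauss}, the general lower bound \eqref{minow2gaus}, elementary properties of the convex order, and the uniqueness statements of \cite{AlCoJo20}. Throughout, the matrix objectives I manipulate differ from those in the statement only by the additive constants $\tr(\Sigma_\mu)$ resp. $\tr(\Sigma_\nu)$, so they have the same minimizers.

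I would first record two elementary facts. (i) If $\eta\le_{cx}\theta$ with $\eta,\theta\in\cP_2(\R^d)$, then testing \eqref{eq:def.convex_order} against the convex functions $x\mapsto\pm a^*x$ and $x\mapsto(a^*x)^2$ ($a\in\R^d$) gives $m(\eta)=m(\theta)$ and $\Sigma_\eta\le\Sigma_\theta$. (ii) For $m\in\R^d$ and $\Sigma\le\Sigma'$ in $\cS^+_d$, one has $\cN_d(m,\Sigma)\le_{cx}\cN_d(m,\Sigma')$: realizing $\cN_d(m,\Sigma')$ as the law of $X+Y$ with $X\sim\cN_d(m,\Sigma)$ and $Y\sim\cN_d(0,\Sigma'-\Sigma)$ independent, Jensen's inequality conditionally on $X$ gives $\int f\,d\cN_d(m,\Sigma)\le\int f\,d\cN_d(m,\Sigma')$ for $f$ convex. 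Together, (i) and (ii) show that the covariance matrices of the elements of $\{\eta\in\cP_2(\R^d):\eta\le_{cx}\cN_d(m_\nu,\Sigma_\nu)\}$ are exactly the $\Sigma\in\cS^+_d$ with $\Sigma\le\Sigma_\nu$, that all such $\eta$ have mean $m_\nu$, and analogously for $\{\eta:\cN_d(m_\mu,\Sigma_\mu)\le_{cx}\eta\}$ with the constraint $\Sigma_\mu\le\Sigma$ and common mean $m_\mu$.

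For $\cI_2$: for any $\eta\le_{cx}\cN_d(m_\nu,\Sigma_\nu)$, fact (i) and \eqref{minow2gaus} give $\cW_2^2(\cN_d(m_\mu,\Sigma_\mu),\eta)\ge|m_\mu-m_\nu|^2+\tr\big(\Sigma_\mu+\Sigma_\eta-2(\Sigma_\mu^{1/2}\Sigma_\eta\Sigma_\mu^{1/2})^{1/2}\big)$, hence at least $|m_\mu-m_\nu|^2+\inf_{\Sigma\in\cS^+_d,\ \Sigma\le\Sigma_\nu}\tr\big(\Sigma_\mu+\Sigma-2(\Sigma_\mu^{1/2}\Sigma\Sigma_\mu^{1/2})^{1/2}\big)$. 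This infimum is attained, since the objective is continuous (the matrix square root being continuous on $\cS^+_d$) and $\{\Sigma\in\cS^+_d:\Sigma\le\Sigma_\nu\}$ is compact; pick a minimizer $\Sigma_{\cI}$. By (ii), $\cN_d(m_\nu,\Sigma_{\cI})$ lies in the constraint set, and by \eqref{w2gauss} it realizes the lower bound with equality, so it is a $\cW_2$-projection of $\cN_d(m_\mu,\Sigma_\mu)$; by uniqueness of $\cI_2$ (\cite[Theorem 2.1]{AlCoJo20}, valid since $\rho=2>1$) it equals $\cI_2(\cN_d(m_\mu,\Sigma_\mu),\cN_d(m_\nu,\Sigma_\nu))$, and $\Sigma_{\cI}$ is forced to be unique (two minimizers would yield two distinct Gaussian projections). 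Running the same chain with $\eta=\cI_2(\mu,\nu)\le_{cx}\nu$ for general $\mu,\nu\in\cP_2(\R^d)$ gives the first displayed inequality.

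For $\cJ_2$ the constraint set $\{\Sigma\in\cS^+_d:\Sigma_\mu\le\Sigma\}$ is unbounded, so instead of proving attainment of $\inf_{\Sigma\ge\Sigma_\mu}\tr\big(\Sigma-2(\Sigma_\nu^{1/2}\Sigma\Sigma_\nu^{1/2})^{1/2}\big)$ directly I would start from a $\cW_2$-projection $\eta^*$ of $\cN_d(m_\nu,\Sigma_\nu)$ onto $\{\eta:\cN_d(m_\mu,\Sigma_\mu)\le_{cx}\eta\}$, which exists by \cite{AlCoJo20}. By (i), $\eta^*$ has mean $m_\mu$ and covariance $\Sigma^*\ge\Sigma_\mu$. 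On one hand \eqref{minow2gaus} gives $\cW_2^2(\cN_d(m_\nu,\Sigma_\nu),\eta^*)\ge|m_\mu-m_\nu|^2+\tr\big(\Sigma_\nu+\Sigma^*-2(\Sigma_\nu^{1/2}\Sigma^*\Sigma_\nu^{1/2})^{1/2}\big)$; on the other hand, comparing with the competitors $\cN_d(m_\mu,\Sigma)$ for $\Sigma\ge\Sigma_\mu$ (admissible by (ii)) and using \eqref{w2gauss} gives $\cW_2^2(\cN_d(m_\nu,\Sigma_\nu),\eta^*)\le|m_\mu-m_\nu|^2+\inf_{\Sigma\ge\Sigma_\mu}\tr\big(\Sigma_\nu+\Sigma-2(\Sigma_\nu^{1/2}\Sigma\Sigma_\nu^{1/2})^{1/2}\big)$. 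Since $\Sigma^*$ is admissible, all these quantities coincide; in particular $\Sigma^*$ attains the infimum and $\cW_2^2(\cN_d(m_\nu,\Sigma_\nu),\eta^*)=|m_\mu-m_\nu|^2+\inf_{\Sigma\ge\Sigma_\mu}\tr\big(\Sigma_\nu+\Sigma-2(\Sigma_\nu^{1/2}\Sigma\Sigma_\nu^{1/2})^{1/2}\big)$. This yields the second displayed inequality (infimize over all $\eta$ with $\mu\le_{cx}\eta$, bounding each via \eqref{minow2gaus}) and shows every such projection has mean $m_\mu$ and covariance solving the matrix infimum; conversely, for any solution $\Sigma_\cJ$, fact (ii) and \eqref{w2gauss} show $\cN_d(m_\mu,\Sigma_\cJ)$ dominates $\cN_d(m_\mu,\Sigma_\mu)$ and satisfies $\cW_2^2(\cN_d(m_\nu,\Sigma_\nu),\cN_d(m_\mu,\Sigma_\cJ))=\cW_2^2(\cN_d(m_\nu,\Sigma_\nu),\eta^*)$, so it is a projection. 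Finally, when $d=1$ or $\Sigma_\nu\in\cS^{+,*}_d$ the projection is unique (\cite[Theorem 4.1]{AlCoJo20}), hence equal to $\cN_d(m_\mu,\Sigma_\cJ)$, and uniqueness of the Gaussian projection forces uniqueness of $\Sigma_\cJ$. I expect the main obstacle to be precisely this $\cJ$ part — pinning down the covariance of an a priori arbitrary, possibly non-Gaussian, projection while the matrix optimization runs over a non-compact set — which I resolve by bootstrapping from the already-known existence of a projection rather than establishing attainment of the matrix infimum from scratch.
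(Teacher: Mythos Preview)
Your proof is correct and follows essentially the same approach as the paper: both rely on the facts that convex ordering forces equality of means and ordering of covariances, that Gaussians with ordered covariances are convex ordered, and that the general lower bound \eqref{minow2gaus} (equivalently, the Gaussian-coupling trick via \eqref{w22}) is saturated by Gaussian competitors thanks to \eqref{w2gauss}. The only cosmetic difference is that for $\cI_2$ you first secure a matrix minimizer by compactness and then invoke uniqueness of the projection, whereas the paper starts from the (already existing) projection, replaces it by the Gaussian with the same covariance, and reads off the matrix characterization; for $\cJ_2$ the two arguments coincide.
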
                                                              \begin{proof}
 Let $\Gamma=\left(\begin{array}{cc}\Sigma_\mu & \Theta\\\Theta^* & \Sigma_{\cI}
                   \end{array}\right)$ denote the covariance matrix of the optimal coupling between $\cN_d(m_\mu,\Sigma_\mu)$ and  $\cI_2(\cN_d(m_\mu,\Sigma_\mu),\cN_d(m_\nu,\Sigma_\nu))$. The inequality $$\cI_2(\cN_d(m_\mu,\Sigma_\mu),\cN_d(m_\nu,\Sigma_\nu))\le_{cx}\cN_d(m_\nu,\Sigma_\nu)$$ implies that $\Sigma_\cI\le\Sigma_\nu$ and therefore $\cN_d(m_\nu,\Sigma_\cI)\le_{cx}\cN_d(m_\nu,\Sigma_\nu)$ and that the expectation of $\cI_2(\cN_d(m_\mu,\Sigma_\mu),\cN_d(m_\nu,\Sigma_\nu))$ is $m_\nu$, so that, by \eqref{w22}, $$\cW_2^2(\cN_d(m_\mu,\Sigma_\mu),\cI_2(\cN_d(m_\mu,\Sigma_\mu),\cN_d(m_\nu,\Sigma_\nu)))=|m_\mu-m_\nu|^2+{\rm tr}\left(\Sigma_\mu+\Sigma_{\cI}-2\Theta\right).$$
                 Since $\cN_{2d}\left(\left(\begin{array}{c}m_\mu\\ m_\nu\end{array}\right),\Gamma\right)$ is a coupling between $\cN_d(m_\mu,\Sigma_\mu)$ and $\cN_d(m_\nu,\Sigma_\cI)$, with \eqref{w22} again, we deduce that
                   $$\cW_2^2(\cN_d(m_\mu,\Sigma_\mu),\cI_2(\cN_d(m_\mu,\Sigma_\mu),\cN_d(m_\nu,\Sigma_\nu)))\ge \cW_2^2(\cN_d(m_\mu,\Sigma_\mu),\cN_d(m_\nu,\Sigma_\cI))$$
                   and therefore $\cI_2(\cN_d(m_\mu,\Sigma_\mu),\cN_d(m_\nu,\Sigma_\nu))=\cN_d(m_\nu,\Sigma_\cI)$. As a consequence, $$\cW_2^2(\cN_d(m_\mu,\Sigma_\mu),\cN_d(m_\nu,\Sigma_\cI))=\inf_{\Sigma\in\cS^+_d:\Sigma\le\Sigma_\nu}\cW_2^2(\cN_d(m_\mu,\Sigma_\mu),\cN_d(m_\nu,\Sigma))$$ and, by \eqref{w2gauss}, $\Sigma_\cI$ solves $\inf_{\Sigma\in\cS^+_d:\Sigma\le\Sigma_\nu}{\rm tr}\left(\Sigma-2(\Sigma_\mu^{1/2}\Sigma\Sigma_\mu^{1/2})^{1/2}\right)$. Since for any other solution $\Sigma$, $\cW_2^2(\cN_d(m_\mu,\Sigma_\mu),\cN_d(m_\nu,\Sigma))=\cW_2^2(\cN_d(m_\mu,\Sigma_\mu),\cN_d(m_\nu,\Sigma_\cI))$, the uniqueness of the projection $\cI_2(\cN_d(m_\mu,\Sigma_\mu),\cN_d(m_\nu,\Sigma_\nu))$ implies that there is a unique solution.
                   Last, since $\cI_2(\mu,\nu)\le_{cx}\nu$ implies that the covariance matrix $\Sigma_{\cI_2(\mu,\nu)}$ of $\cI_2(\mu,\nu)$ satisfies $\Sigma_{\cI_2(\mu,\nu)}\le \Sigma_\nu$, the inequality $$\cW_2(\mu,\cI_2(\mu,\nu))\ge |m_\mu-m_\nu|^2+\inf_{\Sigma\in\cS^+_d:\Sigma\le\Sigma_\nu}{\rm tr}\left(\Sigma_\mu+\Sigma-2(\Sigma_\mu^{1/2}\Sigma\Sigma_\mu^{1/2})^{1/2}\right) 
                   $$ follows from \eqref{minow2gaus} applied with $\nu$ replaced by $\cI_2(\mu,\nu)$.
                   
      Let now $\Gamma=\left(\begin{array}{cc}\Sigma_\nu & \Theta\\\Theta^* & \Sigma_{\cJ}
                   \end{array}\right)$ denote the covariance matrix of an optimal coupling between $\cN_d(m_\nu,\Sigma_\nu)$ and some $\cW_2$-projection (unique when $d=1$ or $\Sigma_\nu\in\cS^{+,*}_d$) $\cJ$ of this measure on the set of probability measures dominating $\cN_d(m_\mu,\Sigma_\mu)$ in the convex order. The inequality $$\cN_d(m_\mu,\Sigma_\mu)\le_{cx} \cJ$$ implies that $\Sigma_\mu\le\Sigma_\cJ$ and therefore $\cN_d(m_\mu,\Sigma_\mu)\le_{cx}\cN_d(m_\mu,\Sigma_\cJ)$ and that the expectation of $\cJ$ is $m_\mu$, so that, by \eqref{w22}, $$\cW_2^2(\cN_d(m_\nu,\Sigma_\nu),\cJ)=|m_\mu-m_\nu|^2+{\rm tr}\left(\Sigma_\nu+\Sigma_{\cJ}-2\Theta\right).$$
                 Since $\cN_{2d}\left(\left(\begin{array}{c}m_\nu\\ m_\mu\end{array}\right),\Gamma\right)$ is a coupling between $\cN_d(m_\nu,\Sigma_\nu)$ and $\cN_d(m_\mu,\Sigma_\cJ)$, with \eqref{w22} again, we deduce that
                   $$\cW_2^2(\cN_d(m_\nu,\Sigma_\nu),\cJ)\ge \cW_2^2(\cN_d(m_\nu,\Sigma_\nu),\cN_d(m_\mu,\Sigma_\cJ))$$
                   and therefore $\cN_d(m_\mu,\Sigma_\cJ)$ is a $\cW_2$-projection $\cJ$ of $\cN_d(m_\nu,\Sigma_\nu)$ on the set of probability measures dominating $\cN_d(m_\mu,\Sigma_\mu)$ in the convex order. As a consequence, $$\cW_2^2(\cN_d(m_\nu,\Sigma_\nu),\cN_d(m_\mu,\Sigma_\cJ))=\inf_{\Sigma\in\cS^+_d:\Sigma_\mu\le\Sigma}\cW_2^2(\cN_d(m_\nu,\Sigma_\nu),\cN_d(m_\mu,\Sigma))$$ and, by \eqref{w2gauss}, $\Sigma_\cJ$ solves $\inf_{\Sigma\in\cS^+_d:\Sigma_\mu\le\Sigma}{\rm tr}\left(\Sigma-2(\Sigma_\nu^{1/2}\Sigma\Sigma_\nu^{1/2})^{1/2}\right)$.
Last, since the covariance matrix of any $\cW_2$-projection of $\nu$ and the set of probability measures dominating $\mu$ is greater than $\Sigma_\mu$, the inequality $$\inf_{\eta\in\cP_2(\R^d):\mu\le_{cx}\eta}\cW^2_2(\nu,\eta)\ge |m_\mu-m_\nu|^2+ \inf_{\Sigma\in\cS^+_d:\Sigma_\mu\le\Sigma}{\rm tr}\left(\Sigma_\nu+\Sigma-2(\Sigma_\nu^{1/2}\Sigma\Sigma_\nu^{1/2})^{1/2}\right)$$ follows from \eqref{minow2gaus} applied with $\mu$ replaced by some projection.           \end{proof}
                 In the next section, we discuss the uniqueness of $\cW_2$-projections of $\cN_d(0,\Sigma_\nu)$ on the set of probability measures dominating $\cN_d(0,\Sigma_\mu)$ in the convex order when $\Sigma_\nu$ is singular. We restrict the analysis to the case of centered distributions since $\eta\in\cP_2(\R^d)$ is a $\cW_2$-projection in this case if and only if the image of $\eta$ by $\R^d\ni x\mapsto x+m_\mu$ is a $\cW_2$-projection of $\cN_d(m_\nu,\Sigma_\nu)$ on the set of probability measures dominating $\cN_d(m_\mu,\Sigma_\mu)$ in the convex order.
                 \subsection{Discussion of the uniqueness of  $\cJ_2(\cN_d(0,\Sigma_\nu),\cN_d(0,\Sigma_\mu))$ when $\Sigma_\nu$ is singular}
                 When $\Sigma_\nu$ is singular, $\cN_d(0,\Sigma_\nu)$ is not absolutely continuous with respect to the Lebesgue measure and the uniqueness result for $\cJ_2(\cN_d(0,\Sigma_\nu),\cN_d(0,\Sigma_\mu))$ in \cite[Theorem 4.1]{AlCoJo20}  does not apply. Of course, for each $\eta\in\cP_2(\R^d)\setminus\{\cN_d(0,\Sigma_\mu)\}$ such that $\cN_d(0,\Sigma_\mu)\le_{cx}\eta$, $$\cW^2_2(\cN_d(0,0),\cN_d(0,\Sigma_\mu))=\int_{\R^d}|x|^2 \cN_d(0,\Sigma_\mu)(dx)<\int_{\R^d}|x|^2 \eta(dx)=\cW^2_2(\cN_d(0,0),\eta),$$
                 so that the unique $\cW_2$-projection of $\cN_d(0,0)$ on the set of probability measures dominating $\cN_d(0,\Sigma_\mu)$ in the convex order is $\cN_d(0,\Sigma_\mu)$.
In the next proposition, we characterize the $\cW_2$-projections of $\cN_d(0,\Sigma_\nu)$ when $\Sigma_\nu$ is singular but not equal to $0$ and check that there always is a unique projection that is Gaussian.

\begin{proposition}\label{propj2} Let $\Sigma_\mu,\Sigma_\nu\in{\mathcal S}^+_d$ with $\Sigma_\nu$ with rank $d'\in\{1,\cdots,d-1\}$ so that $\Sigma_\nu=O{\rm diag}(\lambda_1,\cdots,\lambda_{d'},0,\cdots,0)O^*$ for $\lambda_1,\cdots,\lambda_{d'}>0$ and $O\in{\mathcal O}_d$. Let $\Gamma\in{\mathcal S}_{d'}^+$ denote the covariance matrix of $\cJ_2(\cN_{d'}(0,{\rm diag}(\lambda_1,\cdots,\lambda_{d'})),\cN_{d'}(0,(O^*\Sigma_\mu O)_{1:d',1:d'}))$ and $\Sigma_\star\in\R^{d\times d}$ denote the matrix such that $(O^*\Sigma_\star O)_{1:d',1:d'}=\Gamma$ and all other entries of $O^*\Sigma_\star O$ are equal to those of $O^*\Sigma_\mu O$. Let $g:\R^d \to \R^{d'}$ be defined by $g(x)=O^*_{1:d',1:d}x$. 

   Then, $\eta_\star$ is a $\cW_2$-projection of $\cN_{d}(0,\Sigma_\nu)$ on the set of distributions dominating $\cN_{d}(0,\Sigma_\mu)$ in the convex order (i.e. $\eta_\star \in \arg \inf_{\cN_{d}(0,\Sigma_\mu)\le_{cx} \eta} \cW_2(\cN_{d}(0,\Sigma_\nu),\eta) $) if, and only if the covariance matrix of $\eta_\star$ is $\Sigma_\star$,  $g\#\eta_\star=\mathcal{N}_{d'}(0,\Gamma)$ and $\cN_{d}(0,\Sigma_\mu)\le_{cx} \eta_\star$.    
   The distribution $\cN_{d}(0,\Sigma_\star)$ is one of these projection and is thus the unique projection among Gaussian distributions. Moreover, $\Sigma_\star$ is the unique minimizer of $\inf_{\Sigma\in\cS^+_d:\Sigma_\mu\le\Sigma}{\rm tr}\left(\Sigma-2(\Sigma_\nu^{1/2}\Sigma\Sigma_\nu^{1/2})^{1/2}\right)$.
\end{proposition}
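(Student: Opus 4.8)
First I would conjugate everything by $O$: the map $x\mapsto O^*x$ is a linear isometry of $\R^d$, hence preserves $\cW_2$, the convex order and Gaussianity and commutes with $\cJ_2$; it sends $\cN_d(0,\Sigma_\nu)$ to $\cN_d(0,{\rm diag}(\lambda_1,\dots,\lambda_{d'},0,\dots,0))$, $\cN_d(0,\Sigma_\mu)$ to $\cN_d(0,O^*\Sigma_\mu O)$, and $g$ to the projection onto the first $d'$ coordinates. So I may assume $\Sigma_\nu=\begin{pmatrix}\Lambda&0\\0&0\end{pmatrix}$ with $\Lambda={\rm diag}(\lambda_1,\dots,\lambda_{d'})\in\cS^{+,*}_{d'}$, write $x=(x',x'')\in\R^{d'}\times\R^{d-d'}$, and $g(x)=x'$. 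The crux is that, $\cN_d(0,\Sigma_\nu)$ being carried by $\R^{d'}\times\{0\}$,
\begin{equation}\label{plan:split}
\cW_2^2(\cN_d(0,\Sigma_\nu),\eta)=\int_{\R^d}|y''|^2\,\eta(dy)+\cW_2^2\big(\cN_{d'}(0,\Lambda),g\#\eta\big),\qquad\eta\in\cP_2(\R^d),
\end{equation}
which follows by writing $|x-y|^2=|x'-y'|^2+|y''|^2$ under any coupling (the $|y''|^2$-term is coupling-independent, and couplings of $\cN_{d'}(0,\Lambda)$ with $\eta$, resp.\ with $g\#\eta$, correspond cost-preservingly via gluing with the disintegration of $\eta$ along $g$).

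Next I would bound \eqref{plan:split} from below over $\{\eta:\cN_d(0,\Sigma_\mu)\le_{cx}\eta\}$. Such $\eta$ is centered (test against $\pm a^*x$); testing the convex order against $x\mapsto(a^*x)^2$ gives $\Sigma_\eta\ge\Sigma_\mu$, hence $\int|y''|^2\eta(dy)={\rm tr}\big((\Sigma_\eta)_{d'+1:d,d'+1:d}\big)\ge{\rm tr}\big((\Sigma_\mu)_{d'+1:d,d'+1:d}\big)$; testing against convex functions of $x'$ gives $\cN_{d'}(0,(\Sigma_\mu)_{1:d',1:d'})\le_{cx}g\#\eta$, so $\cW_2^2(\cN_{d'}(0,\Lambda),g\#\eta)\ge\cW_2^2(\cN_{d'}(0,\Lambda),\cN_{d'}(0,\Gamma))$ because $\Lambda$ is nonsingular and therefore, by Proposition~\ref{prop_projG}, $\cN_{d'}(0,\Gamma)=\cJ_2(\cN_{d'}(0,\Lambda),\cN_{d'}(0,(\Sigma_\mu)_{1:d',1:d'}))$ is the unique $\cW_2$-minimizer over that set. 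Adding, the right-hand side of \eqref{plan:split} is $\ge m_\star:={\rm tr}\big((\Sigma_\mu)_{d'+1:d,d'+1:d}\big)+\cW_2^2(\cN_{d'}(0,\Lambda),\cN_{d'}(0,\Gamma))$. Since $\Sigma_\star-\Sigma_\mu$ has top-left block $\Gamma-(\Sigma_\mu)_{1:d',1:d'}\ge 0$ (Proposition~\ref{prop_projG} again) and vanishes elsewhere, $\Sigma_\mu\le\Sigma_\star$, so $\cN_d(0,\Sigma_\mu)\le_{cx}\cN_d(0,\Sigma_\star)$; plugging $\eta=\cN_d(0,\Sigma_\star)$ into \eqref{plan:split} and using $(\Sigma_\star)_{d'+1:d,d'+1:d}=(\Sigma_\mu)_{d'+1:d,d'+1:d}$ together with $g\#\cN_d(0,\Sigma_\star)=\cN_{d'}(0,\Gamma)$ gives value exactly $m_\star$. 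Hence the infimum equals $m_\star$ and is attained at $\cN_d(0,\Sigma_\star)$.

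For the characterization, suppose $\eta_\star$ is a projection. Then equality holds in both lower-bound steps: $\int|y''|^2\eta_\star(dy)={\rm tr}\big((\Sigma_\mu)_{d'+1:d,d'+1:d}\big)$ and $g\#\eta_\star$ is the unique minimizer $\cN_{d'}(0,\Gamma)$, so in particular $(\Sigma_{\eta_\star})_{1:d',1:d'}=\Gamma$. The first equality says that the positive semi-definite matrix $\Sigma_{\eta_\star}-\Sigma_\mu$ has a principal block of zero trace, hence that block is $0$; since a positive semi-definite matrix has the entire row and column through any zero diagonal entry equal to zero, $\Sigma_{\eta_\star}-\Sigma_\mu$ vanishes outside its top-left $d'\times d'$ block, and combined with $(\Sigma_{\eta_\star})_{1:d',1:d'}=\Gamma$ this forces $\Sigma_{\eta_\star}=\Sigma_\star$. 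Conversely, if the three listed conditions hold, then by \eqref{plan:split} and $(\Sigma_\star)_{d'+1:d,d'+1:d}=(\Sigma_\mu)_{d'+1:d,d'+1:d}$ we get $\cW_2^2(\cN_d(0,\Sigma_\nu),\eta_\star)=m_\star$, i.e.\ $\eta_\star$ is a projection. In particular a Gaussian projection has covariance $\Sigma_\star$, hence equals $\cN_d(0,\Sigma_\star)$, which is a projection, so it is the unique Gaussian one. Finally, by Proposition~\ref{prop_projG} the covariance of every projection solves $\inf_{\Sigma\in\cS^+_d:\Sigma_\mu\le\Sigma}{\rm tr}(\Sigma-2(\Sigma_\nu^{1/2}\Sigma\Sigma_\nu^{1/2})^{1/2})$ and every solution is the covariance of a projection; since all projections have covariance $\Sigma_\star$, $\Sigma_\star$ is the unique minimizer.

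The conceptual heart is the orthogonal reduction together with the splitting \eqref{plan:split}, which decouples the degenerate directions (a fixed cost $\int|y''|^2\eta(dy)$ minimized under $\Sigma_\eta\ge\Sigma_\mu$) from a genuine $\cJ_2$-problem in dimension $d'$ that has a unique solution because $\Lambda$ is nonsingular. I expect the only delicate point afterwards to be reading off the exact value $\Sigma_{\eta_\star}=\Sigma_\star$ from the two equality cases, which rests on the elementary fact that a positive semi-definite matrix is zero on every row and column passing through a zero diagonal entry.
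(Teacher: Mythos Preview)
Your proposal is correct and follows essentially the same route as the paper's proof: reduce by the orthogonal conjugation $x\mapsto O^*x$, establish the cost splitting \eqref{plan:split} (the paper's \eqref{decwassd'}), bound the two pieces from below using $\Sigma_\eta\ge\Sigma_\mu$ and the $d'$-dimensional $\cJ_2$-problem with nonsingular $\Lambda$, verify that $\cN_d(0,\Sigma_\star)$ attains the bound, and read off $\Sigma_{\eta_\star}=\Sigma_\star$ from the equality cases via the zero-diagonal-row argument for positive semidefinite matrices. The only minor differences are presentational: you set $O=I_d$ up front and invoke Proposition~\ref{prop_projG} explicitly for both the $d'$-dimensional uniqueness and the final ``$\Sigma_\star$ is the unique minimizer'' claim, whereas the paper carries $O$ throughout and leaves the latter implicit.
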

 Elaborating on this characterization and taking into account results derived in the next section (namely Corollary~\ref{cormgn} and Theorem~\ref{thmprojgauss}), we are able to give a necessary and sufficient condition for $\cN_{d}(0,\Sigma_\star)$ to be the unique projection in $\cP_2(\R^d)$ and not only among Gaussian distributions.\begin{proposition}\label{propcnsunic}
   Under the assumptions and notation of Proposition \ref{propj2}, $\cN_d(0,\Sigma_\star)$  is the unique $\cW_2$-projection of $\cN_{d}(0,\Sigma_\nu)$ on the set of distributions dominating $\cN_{d}(0,\Sigma_\mu)$ in the convex order if and only if ${\rm rk}(\Sigma_\star)={\rm rk}(\Sigma_\nu)$ or $\Sigma_\nu\le (\Sigma_\nu^{1/2}\Sigma_\mu\Sigma_\nu^{1/2})^{1/2}$.
\end{proposition}

 \begin{proof}[Proof of Proposition \ref{propj2}]
The definition of $\Sigma_\star$ clearly ensures that $O^*\Sigma_\mu O\le O^*\Sigma_\star O$ so that $\Sigma_\mu\le \Sigma_\star$ and $\cN_d(0,\Sigma_\mu)\le \cN_d(0,\Sigma_\star)$.
   
 Let $\eta\in\cP_2(\R^d)$. For $\R^d\ni x\mapsto f(x)=O^*x\in\R^{d}$, we have $|f(x)-f(y)|=|x-y|$ for all $x,y\in\R^d$ and therefore
$$\cW^2_2(\cN_d(0,\Sigma_\nu),\eta)=\cW^2_2(f\#\cN_d(0,\Sigma_\nu),f\#\eta)=\cW^2_2(\cN_d(0,{\rm diag}(\lambda_1,\cdots,\lambda_{d'},0,\cdots,0)),f\#\eta).$$
Since the image of any coupling $\pi$ between $\cN_d(0,{\rm diag}(\lambda_1,\cdots,\lambda_{d'},0,\cdots,0))$ and $f\#\eta$ by $\R^d\times\R^d\ni(x_1,\cdots,x_d,y_1,\cdots,y_d)\mapsto (x_1\cdots,x_{d'},y_1,\cdots,y_{d'})\in\R^{d'}\times\R^{d'}$ is some coupling between $\cN_{d'}(0,{\rm diag}(\lambda_1,\cdots,\lambda_{d'}))$ and $g\#\eta$, and $\pi(dx,dy)$ a.e. $|x-y|^2=|(x_1\cdots,x_{d'})-(y_1\cdots,y_{d'})|^2+\sum_{i=d'+1}^dy_i^2$, we deduce that \begin{align*}\cW^2_2(\cN_d(0,\Sigma_\nu),\eta)\ge  \cW^2_2(\cN_{d'}(0,{\rm diag}(\lambda_1,\cdots,\lambda_{d'})),g\#\eta)+\int_{\R^d}\sum_{i=d'+1}^d y_i^2f\#\eta(dy).\end{align*}
Let $f\#\eta_{y_{1:d'}}(dy_{d'+1:d})$ denote the conditional distribution of the last $d-d'$ variables given that the $d'$ first are equal to $y_{1:d'}$ under $f\#\eta$. Since for any coupling $\pi'$ between $\cN_{d'}(0,{\rm diag}(\lambda_1,\cdots,\lambda_{d'}))$ and $g\#\eta$,  $\pi'(dx_{1:d'},dy_{1:d'})\delta_0(dx_{d'+1:d})f\#\eta_{y_{1:d'}}(dy_{d'+1:d})$ is a coupling between $\cN_d(0,{\rm diag}(\lambda_1,\cdots,\lambda_{d'},0,\cdots,0))$ and $f\#\eta$, we even have
\begin{align}
  \cW^2_2(\cN_d(0,\Sigma_\nu),\eta)= \cW^2_2(\cN_{d'}(0,{\rm diag}(\lambda_1,\cdots,\lambda_{d'})),g\#\eta)+\int_{\R^d}\sum_{i=d'+1}^d y_i^2f\#\eta(dy).\label{decwassd'}\end{align}
We now suppose that $\cN_d(0,\Sigma_\mu)\le_{cx}\eta$. Then, by linearity of $f$ and $g$, $\cN_d(0,O^*\Sigma_\mu O)=f\#\cN_d(0,\Sigma_\mu)\le_{cx}f\#\eta$ and $\cN_{d'}(0,(O^*\Sigma_\mu O)_{1:d',1:d'})=g\#\cN_d(0,\Sigma_\mu)\le_{cx}g\#\eta$. We deduce that \begin{align}
   \int_{\R^d}\sum_{i=d'+1}^d y_i^2f\#\eta(dy)\ge \tr((O^*\Sigma_\mu O)_{d'+1:d,d'+1:d})=\tr((O^*\Sigma_\star O)_{d'+1:d,d'+1:d})
,\label{opt1}\end{align}
 by convexity of $\R^d\ni (y_1,\cdots,y_d)\mapsto \sum_{i=d'+1}^d y_i^2$ and 
\begin{align}
   \cW^2_2(\cN_{d'}(0,{\rm diag}(\lambda_1,\cdots,\lambda_{d'})),g\#\eta)\ge\cW^2_2(\cN_{d'}(0,{\rm diag}(\lambda_1,\cdots,\lambda_{d'})),\cN_{d'}(0,\Gamma)),\label{opt2}
\end{align} by definition of $\cJ_2(\cN_{d'}(0,{\rm diag}(\lambda_1,\cdots,\lambda_{d'})),\cN_{d'}(0,(O^*\Sigma_\mu O)_{1:d',1:d'}))=\cN_{d'}(0,\Gamma)$. Plugging those two inequalities in the right-hand side of \eqref{decwassd'}, we obtain that
\begin{align*}\cW^2_2(\cN_d(0,\Sigma_\nu),\eta)&\ge \cW^2_2(\cN_{d'}(0,{\rm diag}(\lambda_1,\cdots,\lambda_{d'})),\cN_{d'}(0,\Gamma))+\tr((O^*\Sigma_\star O)_{d'+1:d,d'+1:d})\\&=\cW^2_2(\cN_{d'}(0,{\rm diag}(\lambda_1,\cdots,\lambda_{d'})),g\#\cN_{d}(0,\Sigma_\star))+\int_{\R^d}\sum_{i=d'+1}^d y_i^2f\#\cN_{d}(0,\Sigma_\star)(dy)
  \\&=\cW^2_2(\cN_d(0,\Sigma_\nu),\cN_d(0,\Sigma_\star)),
   \end{align*}
   where the last equality follows from \eqref{decwassd'} applied with $\eta$ replaced by $\cN_d(0,\Sigma_\star)$. Since $\cN_d(0,\Sigma_\mu)\le_{cx} \cN_d(0,\Sigma_\star)$, we conclude that $$\inf_{\eta\in\cP_2(\R^d):\cN_d(0,\Sigma_\mu)\le_{cx}\eta}\cW^2_2(\cN_d(0,\Sigma_\nu),\eta)=\cW^2_2(\cN_d(0,\Sigma_\nu),\cN_d(0,\Sigma_\star)).$$
   Moreover, a measure $\eta_\star\in\cP_2(\R^d)$ such that $\cN_d(0,\Sigma_\mu)\le_{cx}\eta_\star$ achieves the infimum if and only if the inequalities \eqref{opt2} and \eqref{opt1} are equalities, i.e. $g\#\eta_\star=\cN_{d'}(0,\Gamma)$ and $\int_{\R^d}\sum_{i=d'+1}^d y_i^2f\#\eta_\star(dy)=\tr((O^*\Sigma_\mu O)_{d'+1:d,d'+1:d})$. Therefore, the covariance matrix $\Sigma_{\eta_\star}$ of $\eta_\star$ must satisfy $O^*\Sigma_\mu O\le O^*\Sigma_{\eta_\star}O$, $(O^*\Sigma_{\eta_\star}O)_{1:d',1:d'}=\Gamma$ and $\tr((O^*\Sigma_{\eta_\star}O)_{d'+1:d,d'+1:d})=\tr((O^*\Sigma_\mu O)_{d'+1:d,d'+1:d})$. Since two ordered matrices in ${\mathcal S}^+_{d-d'}$ with the same trace are equal, we deduce that $(O^*\Sigma_{\eta_\star}O)_{d'+1:d,d'+1:d}=(O^*\Sigma_\mu O)_{d'+1:d,d'+1:d}$. With the inequality $O^*\Sigma_\mu O\le O^*\Sigma_{\eta_\star}O$, this implies that all the entries of $O^*\Sigma_\mu O$ and $O^*\Sigma_{\eta_\star}O$ with indices $ij$ such that $i\vee j\ge d'+1$ coincide. Since the equality $g\#\eta_\star=\cN_{d'}(0,\Gamma)$ implies $(O^*\Sigma_{\eta_\star}O)_{1:d',1:d'}=\Gamma$, we conclude that $O^*\Sigma_{\eta_\star}O=O^*\Sigma_\star O$ so that $\Sigma_{\eta_\star}=\Sigma_\star$. Conversely, it is clear that for a measure $\eta_\star \ge_{cx} \cN_{d}(0,\Sigma_\mu)$ with covariance matrix~$\Sigma_\star$ and  $g\#\eta_\star=\cN_{d'}(0,\Gamma)$, \eqref{opt1} and~\eqref{opt2} are equalities, which shows the necessary and sufficient condition for~$\eta_\star$ to be a $\cW_2$-projection. \end{proof}
\begin{proof}[Proof of Proposition \ref{propcnsunic}]
We first check the sufficient condition. Let $\eta_\star$ be a projection. When $\Sigma_\nu\le (\Sigma_\nu^{1/2}\Sigma_\mu\Sigma_\nu^{1/2})^{1/2}$, we have by Corollary \ref{cormgn} below that $\Sigma_\star=\Sigma_\mu$ so that, by Proposition \ref{propj2}, the covariance matrix of $\eta_\star$ is $\Sigma_\mu$. Since $\cN_d(0,\Sigma_\mu)\le_{cx}\eta_\star$ and two probability distributions ordered for the convex order and with the same covariance matrix are equal, 
we deduce that when $\Sigma_\nu\le (\Sigma_\nu^{1/2}\Sigma_\mu\Sigma_\nu^{1/2})^{1/2}$, then $\eta_\star=\cN_d(0,\Sigma_\mu)$. 

Let us now assume that ${\rm rk }(\Sigma_\star)=d'$. Then ${\rm rk }(O^*\Sigma_\star O)=d'$ and there exists $U\in\cO_d$ with columns $U_1,\cdots,U_d$ and $\theta_1,\cdots,\theta_{d'}>0$ such that $O^*\Sigma_\star O=U{\rm diag}(\theta_1,\cdots,\theta_{d'},0,\cdots,0)U^*$. Let $X$ be some centered $d$-dimensional random vector with covariance matrix $O^*\Sigma_\star O$. Since the covariance matrix of $U^*X$ is ${\rm diag}(\theta_1,\cdots,\theta_{d'},0,\cdots,0)$, we have ${\mathbb P}(U_{d'+1}^*X=\cdots=U_{d}^*X=0)=1$ and, since $X=UU^*X$, we deduce that $X=\sum_{i=1}^{d'}(U_i^*X)U_i$. Let for $i\in\{1,\cdots,d'\}$, $\tilde U_i$ denote the vector in $\R^{d'}$ with components equal to the $d'$ first components of $U_i$. The equality of the covariance matrices of both sides of the equality $X_{1:d'}=\sum_{i=1}^{d'}(U_i^*X)\tilde U_i$, writes $\Gamma=\sum_{i=1}^{d'}\theta_i \tilde U_i\tilde U_i^*$. By Theorem~\ref{thmprojgauss}, the rank of $\Gamma$ is $d'$. We deduce that $\bigcap_{i=1}^{d'}\tilde U_i^\perp=\{0\}$ so that the kernel of $\left(\begin{array}{c}\tilde U_1^*
   \\\vdots\\\tilde U_{d'}^*
\end{array}\right)\in\R^{d'\times d'}$ is $\{0\}$ and this matrix has rank $d'$. Hence $\tilde U_1,\cdots,\tilde U_{d'}$ is a basis of $\R^{d'}$. Therefore there exists $A\in\R^{d\times d'}$ such that $X=AX_{1:d'}$. If $Y\sim \eta_\star$, then by Proposition \ref{propj2}, $(O^*Y)_{1:d'}\sim\cN_{d'}(0,\Gamma)$ and the covariance matrix of $O^*Y$ is $O^*\Sigma_\star O$ so that $O^*Y=A(O^*Y)_{1:d'}$. Therefore $O^*Y$ and $Y$ are Gaussian and $\eta_\star=\cN_d(0,\Sigma_\star)$.\\

We now turn to the proof of the necessary condition and suppose that ${\rm rk}(\Sigma_\star)\ne d'$ and $\Sigma_\nu\not\le (\Sigma_\nu^{1/2}\Sigma_\mu\Sigma_\nu^{1/2})^{1/2}$. Then since, by Theorem~\ref{thmprojgauss} below, ${\rm rk}(\Gamma)=d'$, ${\rm rk}(\Sigma_\star)>d'$ and, by Corollary \ref{cormgn} below,  $\Sigma_\star \ne \Sigma_\mu$, i.e.  $(O^*\Sigma_\mu O)_{1:d',1:d'}\ne \Gamma$. We distinguish two cases.

  \paragraph{\bf Case ${\rm rk}(O^*\Sigma_\mu O)>{\rm rk}((O^*\Sigma_\mu O)_{1:d',1:d'})$:} When $\tilde \xi\in\R^{d'}$ is such that $\tilde \xi^*(O^*\Sigma_\mu O)_{1:d',1:d'}\tilde\xi=0$, then for $W$ centered $d$-dimensional random vector with covariance matrix $O^*\Sigma_\mu O$, $\tilde \xi^*W_{1:d'}=0$ so that $0=\E[\tilde\xi^*W_{1:d'}W^*_{d'+1:d}]=\tilde\xi^*O^*\Sigma_\mu O_{1:d',d'+1:d}$. Therefore ${\rm rk}((O^*\Sigma_\mu O)_{1:d',1:d})={\rm rk}((O^*\Sigma_\mu O)_{1:d',1:d'})$ and there exists $\xi\in\R^d$ such that $(O^*\Sigma_\mu O\xi)_{1:d'}=0$ and $\xi^*O^*\Sigma_\mu O\xi>0$. Since by definition   $(O^*\Sigma_\mu O)_{1:d',1:d'}\le \Gamma$ and by assumption $(O^*\Sigma_\mu O)_{1:d',1:d'}\ne \Gamma$, there exists an eigenvector $\tilde{\chi}\in\R^{d'}$ with $|\tilde{\chi}|=1$ of $\Gamma-(O^*\Sigma_\mu O)_{1:d',1:d'}$ associated with a positive eigenvalue $\theta>0$. We can take $\lambda \in (0,1)$ such that 
  $$ \lambda^2 +\lambda \xi^*_{1:d'} \tilde{\chi}\ne 0. $$
  We then set $\chi=\lambda\tilde{\chi}$, so that $ |\chi|^2 +\xi^*_{1:d'} \chi\ne 0$. By construction, we have
$\Gamma-(O^*\Sigma_\mu O)_{1:d',1:d'}-\theta \chi\chi^*\ge 0$ 
. For $X,\varepsilon,Z$ independent with $X\sim\cN_d(0,\Sigma_\mu)$, ${\mathbb P}(\varepsilon=\pm 1)=\frac 12$ and $Z\sim\cN_{d'}(0,\Gamma-(O^*\Sigma_\mu O)_{1:d',1:d'}-\theta \chi\chi^*)$, we define $$Y=X+O_{1:d,1:d'} Z+\left(\varepsilon \sqrt{\frac{\theta}{\xi^*O^*\Sigma_\mu O\xi}}\xi^*O^*X\right)O_{1:d,1:d'} \chi.$$
The random vectors $X$, $Z$ and the scalar random variable $\varepsilon \frac{1}{\sqrt{\xi^*O^*\Sigma_\mu O\xi}} \xi^*O^*X\sim\cN_1(0,1)$ are not correlated so that the covariance matrix of $Y$ is equal to
$$\Sigma_\mu+O_{1:d,1:d'}\left(\Gamma-(O^*\Sigma_\mu O)_{1:d',1:d'}-\theta \chi\chi^*
  +\theta \chi\chi^*\right)O^*_{1:d',1:d}=\Sigma_\star,$$
where we have used the definition of~$\Sigma_\star$ and that for $M\in \R^{d \times d}$ such that $M_{ij}=1_{\{i\vee j\le d'\}}\left(\Gamma_{ij}-(O^*\Sigma_\mu O)_{ij}\right)$, we have  $OMO^*=O_{1:d,1:d'} \left(\Gamma_{ij}-(O^*\Sigma_\mu O)_{ij}\right)O^*_{1:d',1:d}$. 
Moreover, by independence of $(Z,\varepsilon)$ with $X$,
$$\E[Y|X]=X+O_{1:d,1:d'}\left(\E[Z]+\E[\varepsilon]\sqrt{\frac{\theta}{\xi^*O^*\Sigma_\mu O\xi}}\xi^*O^*X\chi\right)=X,$$
so that $\cN_d(0,\Sigma_\mu)={\mathcal L}(X)\le_{cx}{\mathcal L}(Y)$. To show the optimality of ${\mathcal L}(Y)$, according to Proposition \ref{propj2}, it remains to prove that $g\#{\mathcal L}(Y)=\mathcal{N}(0,\Gamma)$. Since $Y$ has the right covariance matrix, it is sufficient to check that  $(O^*Y)_{1:d'}$ is a Gaussian random vector.
Since $(O^*\Sigma_\mu O\xi)_{1:d'}=0$, $(O^*X)_{1:d'}$ is not correlated with $\xi^*O^*X$  so that, since $X$ is Gaussian, $\xi^*O^*X$ is independent of $(O^*X)_{1:d'}$ and so is $(Z,\varepsilon\xi^*O^*X)$. As a consequence, the random vector $((O^*X)_{1:d'},Z,\varepsilon\xi^*O^*X)$ with three independent Gaussian components is Gaussian and so is $(O^*Y)_{1:d'}=(O^*X)_{1:d'}+Z+\varepsilon \sqrt{\frac{\theta}{\xi^*O^*\Sigma_\mu O\xi}}\xi^*O^*X\chi$. We conclude that ${\mathcal L}(Y)$ is a projection of $\cN_d(0,\Sigma_\nu)$ on the set of probability measures dominating $\cN_d(0,\Sigma_\mu)$ in the convex order. Last, \begin{align*}
&(\chi^*O^*_{1:d',1:d}+\xi^*O^*)Y\\&=\chi^*O^*_{1:d',1:d}X+\xi^*O^*X\left(1+\varepsilon \sqrt{\frac{\theta}{\xi^*O^*\Sigma_\mu O\xi}}(|\chi|^2+\xi_{1:d'}^*\chi)\right)+(\chi^*+\xi_{1:d'}^*)Z
\end{align*}
is not Gaussian since the absolute value of the factor of $\xi^*O^*X$ is not constant. Hence ${\mathcal L}(Y)\ne \cN_d(0,\Sigma_\star)$.

\paragraph{\bf Case ${\rm rk}(O^*\Sigma_\mu O)={\rm rk}((O^*\Sigma_\mu O)_{1:d',1:d'})$ :}
Let $X$ be some centered $d$-dimensional random vector with covariance matrix $O^*\Sigma_\mu O$. Then, repeating with $\Sigma_\star$ replaced by $\Sigma_\mu$ the above reasoning leading to the uniqueness of the projection when ${\rm rk}(\Sigma_\star)=d'$, we obtain that $X_{d'+1:d}=BX_{1:d'}$ for some deterministic matrix $B\in\R^{(d-d')\times d'}$. Let us now prove by contradiction the existence of $\tilde \xi\in\R^{d'}$  such that \begin{equation}
   \label{existsxi}\tilde \xi^*(\Gamma-(O^*\Sigma_\mu O)_{1:d',1:d'})\tilde\xi>0\mbox{ and }(O^*\Sigma_\mu O)_{d'+1:d,1:d'}\tilde\xi\ne 0.
\end{equation}Let $\tilde U\in\cO_{d'}$ be such that $\Gamma-(O^*\Sigma_\mu O)_{1:d',1:d'}=\tilde U{\rm diag}(\tilde \theta_1,\cdots,\tilde \theta_{r},0,\cdots,0)\tilde U^*$ with $r\in\{1,\cdots,d'\}$ and $\tilde\theta_1\ge \cdots\ge \tilde\theta_{r}>0$. We suppose that  $(O^*\Sigma_\mu O)_{d'+1:d,1:d'}\tilde U_{1:d',1:r}=0$, which amounts to supposing that $\tilde\xi$ does not exist.  Since $X_{d'+1:d}$ is not correlated with $(\tilde U^*)_{1:r,1:d'}X$, we even have $X_{d'+1:d}=
\tilde B(\tilde U^*)_{r+1:d',1:d'}X_{1:d'}$ for some deterministic matrix $\tilde B\in\R^{(d-d')\times (d'-r)}$. For any centered $d$-dimensional random vector $Z$ with covariance matrix $O^*\Sigma_\star O$, the covariance matrix of $((\tilde U^*)_{r+1:d',1:d'}Z,Z_{d'+1:d})$ is the same as the one of $((\tilde U^*)_{r+1:d',1:d'}X,X_{d'+1:d})$ so that $Z_{d'+1:d}=
\tilde B(\tilde U^*)_{r+1:d',1:d'}Z_{1:d'}$. We deduce that the rank of the covariance matrix of the random vector $(\tilde U^* Z_{1:d'},Z_{d'+1:d})$ is not greater than $d'$. Since this random vector is the image of $Z$ by some orthogonal transformation, $d'\ge {\rm rk}(O^*\Sigma_\star O)={\rm rk}(\Sigma_\star)$, which provides the desired contradiction. 
Setting $\Theta=(O^*\Sigma_\mu O)_{1:d',1:d'}$, we now recall the martingale coupling  introduced in the second point of Remark 1.17 \cite{joupagm} that sends ${\cN}_{d'}(0,\Theta)$ to ${\cN}_{d'}(0,\Gamma)$ but does not consist in the addition of an independent random vector distributed according to ${\cN}_{d'}(0,\Gamma-\Theta)$. 
  The inequality $\Theta\le \Gamma$ implies with the diagonalization of $\Gamma^{-1/2}\Theta\Gamma^{-1/2}$  that $\Theta=\Gamma^{1/2}V{\rm diag}(\zeta^2_1,\cdots,\zeta^2_{d'})V^*\Gamma^{1/2}$ for some $V\in{\mathcal O}(d')$ and $\zeta_1,\cdots,\zeta_{d'}\in [0,1]$. 
The image of $\cN_{d'}(0,I_{d'})$ by the Markov kernel \begin{align*}
   Q(w,d\tilde w)&=\otimes_{i=1}^{d'}\left(\frac{1-\zeta_i}{2}\delta_{-\zeta_iw_i}+\frac{1+\zeta_i}{2}\delta_{\zeta_iw_i}\right)(d\tilde w_i)\\&=\sum_{(s_1,\cdots,s_{d'})\in\{-1,1\}^{d'}}\prod_{i=1}^{d'}\frac{1+s_i\zeta_i}{2}\delta_{(\zeta_1s_1 w_1,\cdots,\zeta_{d'}s_{d'} w_{d'})}(d\tilde w)
\end{align*}
is $\cN_{d'}(0,{\rm diag}(\zeta_1^2,\cdots,\zeta_{d'}^2))$. The image of $\cN_{d'}(0,I_{d'})(dw)Q(w,d\tilde w)$ by $\R^{d'}\times\R^{d'}\ni (w,\tilde w)\mapsto (\Gamma^{1/2}V\tilde w,\Gamma^{1/2}Vw)\in\R^{d'}\times\R^{d'}$ is a martingale coupling $\pi(dx,d\tilde x)=\cN_{d'}(0,\Theta)(dx)\pi_x(d\tilde x)$ between $\cN_{d'}(0,\Theta)$ and $\cN_{d'}(0,\Gamma)$.  
Let now $X\sim\cN_d(0,O^*\Sigma_\mu O)$, $\tilde X$ be distributed according to $\pi_{X_{1:d'}}(d\tilde x)$ and $Y$ denote the $\R^d$-valued random vector such that $Y_{1:d'}=\tilde X$ and $Y_{d'+1:d}=BX_{1:d'}=X_{d'+1:d}$. One has $\E[\tilde X|X]=\E[\E[\tilde X|X_{1:d'}]|X]=X_{1:d'}$ by the martingale property so that $\E[Y|X]=\left( \begin{array}{c}
  X_{1:d'}\\
  BX_{1:d'}
\end{array}\right)=X$ and thus  $\E[OY|OX]=OX$. Moreover $\E[X_{d'+1:d}\tilde X^*]=\E[X_{d'+1:d}\E[\tilde X|X]^*]=\E[X_{d'+1:d}X_{1:d'}^*]$ so that the covariance matrix of $Y$ is $O^*\Sigma_\star O$. Then, the covariance matrix of $OY$ is $\Sigma_\star$, $(O^*OY)_{1:d'}=Y_{1:d'}=\tilde X\sim\cN_{d'}(0,\Gamma)$ and $\cN_d(0,\Sigma_\mu)={\mathcal L}(OX)\le_{cx} {\mathcal L}(OY)$ by Strassen's theorem. As a consequence, by Proposition \ref{propj2},  ${\mathcal L}(OY)$ is a $\cW_2$-projection of $\cN_d(0,\Sigma_\nu)$ on the set of probability measures dominating $\cN_d(0,\Sigma_\mu)$ in the convex order. For $\tilde\xi\in\R^{d'}$ such that \eqref{existsxi} holds, the conditional law of $\tilde \xi^* Y_{1:d'}$ given $Y_{d'+1:d}$ is not Gaussian: indeed, $(O^*\Sigma_\mu O)_{d'+1:d,1:d'}\tilde\xi\ne 0$ gives that $Y_{d'+1:d}=X_{d'+1:d}$ is not independent of $X_{1:d'}$ and $\tilde \xi^*(\Gamma-(O^*\Sigma_\mu O)_{1:d',1:d'})\tilde\xi>0$ gives that $\tilde \xi^* Y_{1:d'}$ is distinct from $\tilde \xi^* X_{1:d'}$. Therefore ${\mathcal L}(Y)$ is not Gaussian. Hence ${\mathcal L}(OY)\ne \cN_d(0,\Sigma_\star)$. 
  \end{proof} 
\begin{example}
  Let $\nu=\cN_2\left(0,\left(\begin{array}{cc}2 &0\\0 & 0\end{array}\right)\right)$. If $\mu=\cN_2\left(0,I_2\right)$, then $\cN_2\left(0,\left(\begin{array}{cc}2 &0\\0 & 1\end{array}\right)\right)$ and $\eta(dx_1,dx_2)=\int_{(y_1,y_2)\in\R^2} \frac 12\left(\delta_{(y_1-y_2,y_2)}+\delta_{(y_1+y_2,y_2)}\right)(dx_1,dx_2)\mu(dy_1,dy_2)$ both 
  are $\cW_2$ projections of $\nu$ on the set of probability measures dominating $\mu$ in the convex order.
 If $\mu=\cN_2\left(0,\left(\begin{array}{cc}1 &1\\1 & 1\end{array}\right)\right)$ then $\cN_2\left(0,\left(\begin{array}{cc}2 &1\\1 & 1\end{array}\right)\right)$ and $\eta(dx_1,dx_2)=\int_{(y_1,y_2)\in\R^2} \left(\frac{\sqrt{2}-1}{2\sqrt{2}}\delta_{(-\sqrt{2}y_1,y_2)}+\frac{\sqrt{2}+1}{2\sqrt{2}}\delta_{(\sqrt{2}y_1,y_2)}\right)(dx_1,dx_2)\mu(dy_1,dy_2)$ both 
  are $\cW_2$ projections of $\nu$ on the set of probability measures dominating $\mu$ in the convex order. The example $\mu=\cN_2\left(0,I_2\right)$ is prototypical of the case ${\rm rk}(O^*\Sigma_\mu O)>{\rm rk}((O^*\Sigma_\mu O)_{1:d',1:d'})$ in the previous proof while the example $\mu=\cN_2\left(0,\left(\begin{array}{cc}1 &1\\1 & 1\end{array}\right)\right)$ is prototypical of the case ${\rm rk}(O^*\Sigma_\mu O)={\rm rk}((O^*\Sigma_\mu O)_{1:d',1:d'})$.
\end{example}

\section{Characterization of the Wasserstein projections for Gaussian distributions}

In this section, we only work with centered Gaussian distributions. We thus use the following shorthand notation, for $\Sigma_\mu,\Sigma_\nu \in \cS_d^+$ 
\begin{align*}
  &\BW^2(\Sigma_\mu,\Sigma_\nu)=\cW^2_2(\cN_d(0,\Sigma_\mu),\cN_d(0,\Sigma_\nu))=\tr\left(\Sigma_\mu+\Sigma_\nu-2(\Sigma_\mu^{1/2}\Sigma_\nu\Sigma_\mu^{1/2})^{1/2}\right),\\
  &\cI_2(\Sigma_\mu,\Sigma_\nu)\in \cS_d^+ \text{ is such that } \cI_2(\cN_d(0,\Sigma_\mu),\cN_d(0,\Sigma_\nu))=\cN_d(0,\cI_2(\Sigma_\mu,\Sigma_\nu)),\\
  &\cJ_2(\Sigma_\nu,\Sigma_\mu)\in \cS_d^+ \text{ is such that } \cJ_2(\cN_d(0,\Sigma_\nu),\cN_d(0,\Sigma_\mu))=\cN_d(0,\cJ_2(\Sigma_\nu,\Sigma_\mu)).
\end{align*} 
The distance $\BW$ is called the Bures-Wasserstein distance on semidefinite positive matrices, see e.g.~\cite{BhJaLi19}. Note that the existence and uniqueness of $\cJ_2(\Sigma_\nu,\Sigma_\mu)$ (resp. $\cI_2(\Sigma_\mu,\Sigma_\nu)$) is guaranteed by Proposition~\ref{propj2} (resp.~\ref{prop_projG}).

We now state the main result of this section.

\begin{theorem}\label{thmprojgauss}                         Let $\Sigma_\mu,\Sigma_\nu\in\cS^+_d$. There exists $O\in{\mathcal O}_d$ such that $DO^*\Sigma_\mu OD\le O^*\Sigma_\nu O$ where $D={\rm diag}\left(1\wedge\sqrt{\frac{(O^*\Sigma_\nu O)_{11}}{(O^*\Sigma_\mu O)_{11}}},\cdots,1\wedge\sqrt{\frac{(O^*\Sigma_\nu O)_{dd}}{(O^*\Sigma_\mu O)_{dd}}}\right)$ (here and afterwards, we use the convention that $1\wedge\sqrt{\frac{(O^*\Sigma_\nu O)_{ii}}{(O^*\Sigma_\mu O)_{ii}}}=1$ when $(O^*\Sigma_\mu O)_{ii}=0$). Moreover, for any such $O\in{\mathcal O}_d$,\begin{align}\cI_2(\Sigma_\mu,\Sigma_\nu)&=ODO^*\Sigma_\mu ODO^*,
  \notag\\
 \BW^2(\Sigma_\mu,\cI_2(\Sigma_\mu,\Sigma_\nu))
&=\sum_{i=1}^d\left(\sqrt{(O^*\Sigma_\mu O)_{ii}}-\sqrt{(O^*\Sigma_\nu O)_{ii}}\right)_+^2=\BW^2(\Sigma_\nu,\cJ_2(\Sigma_\nu,\Sigma_\mu)),\notag\\\cJ_2(\Sigma_\nu,\Sigma_\mu)&=O\tilde \Sigma_{\cJ}O^*\mbox{ where }\notag\\(\tilde \Sigma_{\cJ})_{ij}=&1_{\{(O^*\Sigma_\nu O)_{ii}(O^*\Sigma_\nu O)_{jj}=0\}}(O^*\Sigma_\mu O)_{ij}+1_{\{(O^*\Sigma_\nu O)_{ii}(O^*\Sigma_\nu O)_{jj}>0\}}\frac{(O^*\Sigma_\nu O)_{ij}}{D_{ii}D_{jj}}\label{deftsigj}.\end{align}
\end{theorem}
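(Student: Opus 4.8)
The plan is to isolate two elementary facts, reduce the whole statement to the existence of a single ``good'' orthogonal matrix, and then produce such a matrix by a Karush--Kuhn--Tucker (KKT) analysis in the non-singular case and by approximation in general. \emph{Tool~1 (a lower bound).} For all $\Sigma_1,\Sigma_2\in\cS_d^+$ one has $\BW^2(\Sigma_1,\Sigma_2)\ge\sum_{i=1}^d(\sqrt{(\Sigma_1)_{ii}}-\sqrt{(\Sigma_2)_{ii}})^2$: writing $M=\Sigma_1^{1/2}\Sigma_2^{1/2}$ we have $(\Sigma_1^{1/2}\Sigma_2\Sigma_1^{1/2})^{1/2}=(MM^*)^{1/2}$, so $\tr((\Sigma_1^{1/2}\Sigma_2\Sigma_1^{1/2})^{1/2})=\|M\|_1=\max_{O\in\cO_d}\tr(O\Sigma_2^{1/2}\Sigma_1^{1/2})$ and, for each $O$, $\tr(O\Sigma_2^{1/2}\Sigma_1^{1/2})=\sum_i\langle\Sigma_1^{1/2}e_i,O\Sigma_2^{1/2}e_i\rangle\le\sum_i|\Sigma_1^{1/2}e_i|\,|\Sigma_2^{1/2}e_i|=\sum_i\sqrt{(\Sigma_1)_{ii}(\Sigma_2)_{ii}}$ by Cauchy--Schwarz, which is the claim after expanding $\BW^2$. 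Since $\BW$ is invariant under $\Sigma\mapsto O^*\Sigma O$ and $(\sqrt a-\sqrt c)^2\ge(\sqrt a-\sqrt b)_+^2$ whenever $0\le c\le b$, we deduce that for every $O\in\cO_d$, with $A:=O^*\Sigma_\mu O$, $B:=O^*\Sigma_\nu O$ and $\Phi(O):=\sum_i(\sqrt{A_{ii}}-\sqrt{B_{ii}})_+^2$, both $\inf_{\Sigma\le\Sigma_\nu}\BW^2(\Sigma_\mu,\Sigma)\ge\Phi(O)$ and $\inf_{\Sigma\ge\Sigma_\mu}\BW^2(\Sigma_\nu,\Sigma)\ge\Phi(O)$. \emph{Tool~2 (an algebraic identity).} For $D=D(O)$ as in the statement, $D\ge0$, so $A^{1/2}DA^{1/2}\ge0$ with square $A^{1/2}DADA^{1/2}$; hence $\BW^2(\Sigma_\mu,ODO^*\Sigma_\mu ODO^*)=\BW^2(A,DAD)=\tr A+\tr(DAD)-2\tr(DA)=\Phi(O)$, the last equality being the coordinatewise check $A_{ii}+d_i^2A_{ii}-2d_iA_{ii}=(\sqrt{A_{ii}}-\sqrt{B_{ii}})_+^2$ for $d_i=1\wedge\sqrt{B_{ii}/A_{ii}}$. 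A similar computation gives $\BW^2(\Sigma_\nu,O\tilde\Sigma_{\cJ}O^*)=\Phi(O)$, while by construction $\tilde\Sigma_{\cJ}-A$ vanishes off the index block $\{B_{ii}B_{jj}>0\}$ and equals $D^{-1}BD^{-1}-A$ there, so that $\tilde\Sigma_{\cJ}\ge A\iff DAD\le B$.

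\emph{Reduction.} Call $O\in\cO_d$ \emph{good} if $DO^*\Sigma_\mu OD\le O^*\Sigma_\nu O$, equivalently $ODO^*\Sigma_\mu ODO^*\le\Sigma_\nu$. Given a good $O$, the matrix $ODO^*\Sigma_\mu ODO^*$ is admissible in $\inf_{\Sigma\le\Sigma_\nu}\BW^2(\Sigma_\mu,\Sigma)$ with objective value $\Phi(O)$ by Tool~2, while Tool~1 gives that this infimum is $\ge\Phi(O)$; hence it realises the infimum, and by uniqueness of the solution of $\inf_{\Sigma\le\Sigma_\nu}\tr(\Sigma-2(\Sigma_\mu^{1/2}\Sigma\Sigma_\mu^{1/2})^{1/2})$ (Proposition~\ref{prop_projG}) it equals $\cI_2(\Sigma_\mu,\Sigma_\nu)$, and $\BW^2(\Sigma_\mu,\cI_2(\Sigma_\mu,\Sigma_\nu))=\Phi(O)$. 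Likewise $O\tilde\Sigma_{\cJ}O^*\ge\Sigma_\mu$ (goodness of $O$ plus Tool~2), it is admissible in $\inf_{\Sigma\ge\Sigma_\mu}\BW^2(\Sigma_\nu,\Sigma)$ with value $\Phi(O)$, the infimum being $\ge\Phi(O)$, so by Propositions~\ref{prop_projG} and~\ref{propj2} it equals $\cJ_2(\Sigma_\nu,\Sigma_\mu)$. Finally $\BW^2(\Sigma_\mu,\cI_2(\Sigma_\mu,\Sigma_\nu))=\BW^2(\Sigma_\nu,\cJ_2(\Sigma_\nu,\Sigma_\mu))$ is \eqref{fb} applied to the Gaussian projections. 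Thus all three formulas follow once a good $O$ is exhibited, and the whole proof reduces to proving existence of such an $O$.

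\emph{Existence of a good $O$: non-singular case.} Assume $\Sigma_\mu,\Sigma_\nu\in\cS^{+,*}_d$ and put $\Sigma_{\cI}=\cI_2(\Sigma_\mu,\Sigma_\nu)$. First $\Sigma_{\cI}\in\cS^{+,*}_d$: if $\Sigma_{\cI}v=0$ with $v\ne0$, then $\Sigma_{\cI}+\varepsilon vv^*\le\Sigma_\nu$ for small $\varepsilon>0$ (since $v^*(\Sigma_\nu-\Sigma_{\cI})v=v^*\Sigma_\nu v>0$), and concavity of $\Sigma\mapsto\tr((\Sigma_\mu^{1/2}\Sigma\Sigma_\mu^{1/2})^{1/2})$ together with its $\sqrt\varepsilon$ behaviour when a new direction is added makes $\BW^2(\Sigma_\mu,\Sigma_{\cI}+\varepsilon vv^*)<\BW^2(\Sigma_\mu,\Sigma_{\cI})$, contradicting optimality. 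Hence $\BW^2(\Sigma_\mu,\cdot)$ is differentiable at $\Sigma_{\cI}$ with gradient $I-T^{-1}$, where $T:=\Sigma_\mu^{-1/2}(\Sigma_\mu^{1/2}\Sigma_{\cI}\Sigma_\mu^{1/2})^{1/2}\Sigma_\mu^{-1/2}\in\cS^{+,*}_d$ is the Brenier map from $\cN_d(0,\Sigma_\mu)$ to $\cN_d(0,\Sigma_{\cI})$. The convex problem $\inf_{0\le\Sigma\le\Sigma_\nu}\BW^2(\Sigma_\mu,\Sigma)$ satisfies Slater's condition (take $\tfrac12\Sigma_\nu$), so KKT holds at $\Sigma_{\cI}$; as $\Sigma_{\cI}\in\cS^{+,*}_d$, the multiplier of $\Sigma\ge0$ vanishes, leaving $\Lambda:=T^{-1}-I\ge0$ (so $T\le I$) with complementary slackness $\Lambda(\Sigma_\nu-\Sigma_{\cI})=0$. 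Diagonalising $T=ODO^*$ with $D={\rm diag}(d_1,\dots,d_d)$, $0<d_i\le1$, we get $\Sigma_{\cI}=ODO^*\Sigma_\mu ODO^*$ and thus $O^*\Sigma_{\cI}O=DAD$; the constraint $\Sigma_{\cI}\le\Sigma_\nu$ becomes $DAD\le B$ and slackness becomes $(D^{-1}-I)(B-DAD)=0$, which forces the $i$-th row of $B-DAD$ to vanish whenever $d_i<1$, in particular $B_{ii}=d_i^2A_{ii}$, i.e. $d_i=\sqrt{B_{ii}/A_{ii}}<1$; while for $d_i=1$, $DAD\le B$ gives $A_{ii}\le B_{ii}$. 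In all cases $d_i=1\wedge\sqrt{B_{ii}/A_{ii}}$, so $O$ is good and $D=D(O)$.

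\emph{Existence of a good $O$: singular case, and the main obstacle.} If $\Sigma_\mu\in\cS^{+,*}_d$ but $\Sigma_\nu$ is singular, apply the previous paragraph to $\Sigma_\nu+n^{-1}I$, use continuity of $\cI_2$ (Proposition~\ref{propcont}) and compactness of $\cO_d$ to extract a limiting $O$; the diagonal entries of $D$ converge because the denominators $(O_n^*\Sigma_\mu O_n)_{ii}$ stay bounded away from $0$, and passing $D_nA_nD_n\le B_n$ to the limit shows $O$ is good; the $\cJ$-formula for singular $\Sigma_\nu$ is then obtained from the dimensional reduction of Proposition~\ref{propj2}, which turns it into a non-singular instance of dimension ${\rm rk}(\Sigma_\nu)$. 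If $\Sigma_\mu$ is singular, approximate by $\Sigma_\mu+n^{-1}I$ and pass to the limit likewise, observing that at indices $i$ with $(O^*\Sigma_\mu O)_{ii}=0$ the $i$-th row of $O^*\Sigma_\mu O$ vanishes, so the value of $d_i$ is immaterial in $ODO^*\Sigma_\mu ODO^*$ and the convention $d_i=1$ is consistent; the matching $\cJ$-formula follows either from continuity of $\cJ_2$ in its second argument (Proposition~\ref{propcont}, valid when $\Sigma_\nu\in\cS^{+,*}_d$) or again from Proposition~\ref{propj2}. I expect the KKT step of the non-singular case — differentiating the fidelity term, proving $\Sigma_{\cI}\in\cS^{+,*}_d$, and extracting from complementary slackness the exact value $d_i=1\wedge\sqrt{B_{ii}/A_{ii}}$ — to be the crux, with the limiting arguments and the bookkeeping around the $1\wedge\sqrt{\,\cdot\,}$ convention at degenerate coordinates forming the main secondary difficulty.
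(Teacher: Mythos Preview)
Your overall strategy is sound and, in places, more self-contained than the paper's. The paper splits the result into Proposition~\ref{prop_corot} (given a good $O$, derive the $\cI_2$ formula via a first-order inequality and the convexity of $\BW^2$) and Proposition~\ref{prop_existsO} (produce a good $O$ by taking an orthogonal matrix that makes $O^*\Sigma_\nu O$ and $O^*\cJ_2(\Sigma_\nu,\Sigma_\mu)O$ share a correlation matrix---Theorem~\ref{thm_BW}---and then applying first-order optimality to the $\cJ$-problem). You replace the first step by a matching lower/upper bound argument (your Tool~1 trace-norm inequality plus the explicit computation of Tool~2), and the second by a KKT analysis of the $\cI$-problem that diagonalises the Brenier map. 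Your route avoids invoking the shared-correlation theorem and yields the good $O$ directly as an eigenbasis of $T$; the paper's route ties the construction of $O$ to the structure of $\cJ_2$, which makes the $\cJ$-formula almost immediate. Both approaches amount to the same first-order condition, read off from $\cI_2$ in your case and from $\cJ_2$ in the paper's.

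There is one genuine slip in your KKT paragraph: from $\Sigma_{\cI}v=0$ and $v^*(\Sigma_\nu-\Sigma_{\cI})v=v^*\Sigma_\nu v>0$ you conclude that $\Sigma_{\cI}+\varepsilon vv^*\le\Sigma_\nu$ for small $\varepsilon$. This is false in general: you need $v$ to lie in the range of $\Sigma_\nu-\Sigma_{\cI}$, not merely outside its kernel. For instance $\Sigma_{\cI}=\left(\begin{smallmatrix}1&1\\1&1\end{smallmatrix}\right)$, $\Sigma_\nu=\left(\begin{smallmatrix}2&1\\1&1\end{smallmatrix}\right)$, $v=(1,-1)^*$ give $\Sigma_\nu-\Sigma_{\cI}=\left(\begin{smallmatrix}1&0\\0&0\end{smallmatrix}\right)$, and no $\varepsilon>0$ works. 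The fix is simple: replace $\Sigma_{\cI}+\varepsilon vv^*$ by the convex combination $\Sigma_t=(1-t)\Sigma_{\cI}+t\Sigma_\nu$, which is always admissible; the trace of the square root of $\Sigma_\mu^{1/2}\Sigma_t\Sigma_\mu^{1/2}$ picks up a $c\sqrt t$ contribution from the kernel direction (your ``$\sqrt\varepsilon$ behaviour'') while $\tr(\Sigma_t)$ moves only linearly, giving the same contradiction. With that repair, your argument goes through.
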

\begin{remark}\label{rk_thm_main}
  \begin{itemize}
  \item One has $\tr(\cI_2(\Sigma_\mu,\Sigma_\nu))=\tr(DO^*\Sigma_\mu OD)=\sum_{i=1}^d (O^*\Sigma_\mu O)_{ii}\wedge (O^*\Sigma_\nu O)_{ii}$ while $\tr(\cJ_2(\Sigma_\nu,\Sigma_\mu))=\sum_{i=1}^d (O^*\Sigma_\mu O)_{ii}\vee (O^*\Sigma_\nu O)_{ii}$. As a consequence,
    \begin{align*}
     \tr(\cI_2(\Sigma_\mu,\Sigma_\nu))+\tr(\cJ_2(\Sigma_\nu,\Sigma_\mu))&=\sum_{i=1}^d (O^*\Sigma_\mu O)_{ii}+\sum_{i=1}^d (O^*\Sigma_\nu O)_{ii}\\&=\tr(O^*\Sigma_\mu O)+\tr(O^*\Sigma_\nu O)=\tr(\Sigma_\mu)+\tr(\Sigma_\nu). 
    \end{align*}
    We deduce that the sum of the integrals of the function $\R^d\ni x\mapsto |x|^2$ againsts $\cI_2(\cN_d(m_\mu,\Sigma_\mu),\cN_d(m_\nu,\Sigma_\nu))$ and $\cJ_2(\cN_d(m_\nu,\Sigma_\nu),\cN_d(m_\mu,\Sigma_\mu))$ is equal to the sum of its integrals against $\cN_d(m_\mu,\Sigma_\mu)$ and $\cN_d(m_\nu,\Sigma_\nu)$. Note that the same property holds in dimension $d=1$ even for non Gaussian probability measures
    $$\forall  \mu,\nu\in\cP_2(\R),\;\int_\R x^2(\cI(\mu,\nu)+\cJ(\nu,\mu))(dx)=\int_\R x^2(\mu+\nu)(dx).$$
This follows from \eqref{eq:quantileProjections}
combined with the equalities $\int_\R x^2\eta(dx)=\int_0^1(F_\eta^{-1}(u))^2du$, consequence of the inverse transform sampling and $\int_0^1 \partial_-G(u)\partial_-\co(G)(u)du=\int_0^1(\partial_-\co(G)(u))^2du$ (see Theorem 5 \cite{CLRM}). Still in dimension $1$, $$W_2^2(\mu,\cJ(\nu,\mu))=\int_0^1(\partial_- G(u)-\partial_-\co(G)(u))^2du=W_2^2(\nu,\cI(\mu,\nu))$$
equality which is preserved in the multidimensional Gaussian case when one can choose the orthogonal matrix $O$ in Theorem \ref{thmprojgauss}  such that $O^*\Sigma_\mu O$ and $O^*\Sigma_\nu O$ share the same correlation matrix (see Corollary~\ref{cor_DDhat} below for a discussion of this setting) so that $$\BW^2(\Sigma_\mu,\cJ_2(\Sigma_\nu,\Sigma_\mu))=\sum_{i=1}^d\left(\sqrt{(O^*\Sigma_\nu O)_{ii}}-\sqrt{(O^*\Sigma_\mu O)_{ii}}\right)_+^2=\BW^2(\Sigma_\nu,\cI_2(\Sigma_\mu,\Sigma_\nu)).$$
\item When $\Sigma_\mu,\Sigma_\nu\in\cS^+_d$ commute, then $O^*\Sigma_\mu O={\rm diag}(\mu_1,\cdots,\mu_d)$ and $O^*\Sigma_\nu O={\rm diag}(\nu_1,\cdots,\nu_d)$ for some $O\in\cO_d$ and $\mu_1,\cdots,\mu_d,\nu_1,\cdots,\nu_d\in\R_+$. The associated diagonal matrix is $D={\rm diag}\left(1\wedge \sqrt{\frac{\nu_1}{\mu_1}},\cdots,1\wedge\sqrt{\frac{\nu_d}{\mu_d}}\right)$ and since $DO^*\Sigma_\mu OD={\rm diag}(\mu_1\wedge \nu_1,\cdots,\mu_d\wedge \nu_d)\le O^*\Sigma_\nu O$, Theorem \ref{thmprojgauss} implies that \begin{align*}&\cI_2(\Sigma_\mu,\Sigma_\nu)=O{\rm diag}(\mu_1\wedge \nu_1,\cdots,\mu_d\wedge \nu_d)O^*,\\&\cJ_2(\Sigma_\nu,\Sigma_\mu)=O{\rm diag}(\mu_1\vee \nu_1,\cdots,\mu_d\vee\nu_d)O^*,\end{align*} and $\BW^2(\Sigma_\mu,\cI_2(\Sigma_\mu,\Sigma_\nu))=\sum_{i=1}^d\left(\sqrt{\mu_i}-\sqrt{\nu_i}\right)_+^2$.

\item From Theorem~\ref{thmprojgauss}, we get that $\R^d\ni x\mapsto O^*DOx$ is the optimal transport map  between $\cN_d(0,\Sigma_\mu)$ and $\cN_d(0,\cI_2(\Sigma_\mu,\Sigma_\nu))$. It is the gradient of $ x\mapsto \frac 12 x^* O^*DOx$ which is a convex contraction, in line with~\cite[Theorem 7.4]{KimRuan}.
 When in addition $(O^*\Sigma_\nu O)_{ii}>0$ for all $i\in\{1,\cdots,d\}$ and in particular when $\Sigma_\nu$ is positive definite, then $D$ is positive definite and $O\tilde\Sigma_\cJ O^*=OD^{-1}O^*\Sigma_\nu OD^{-1}O^*$. The optimal transport map  between $\cN_d(0,\Sigma_\nu)$ and $\cN_d(0,\cJ_2(\Sigma_\nu,\Sigma_\mu))$ is $\R^d\ni x\mapsto OD^{-1}O^* x$. It is a convex expansion as expected from~\cite[Theorem 7.6]{KimRuan}, and it is the inverse of the optimal transport map between $\cN_d(0,\Sigma_\mu)$ and $\cN_d(0,\cI_2(\Sigma_\mu,\Sigma_\nu))$, as given by~\cite[Corollary 8.5]{KimRuan}.  Let us note however that we only need here that $(O^*\Sigma_\nu O)$ has positive diagonal coefficients while the application of \cite[Theorem 7.6 and Corollary 8.5]{KimRuan} would require to have $\Sigma_\nu \in \cS_d^{+,*}$. 
\item Still when $(O^*\Sigma_\nu O)_{ii}>0$ for all $i\in\{1,\cdots,d\}$, we have equality of  the images of the optimal couplings between $\cN_d(0,\Sigma_\mu)$ and $\cN_d(0,\cI_2(\Sigma_\mu,\Sigma_\nu))$ and between $\cN_d(0,\Sigma_\nu)$ and $\cN_d(0,\cJ_2(\Sigma_\nu,\Sigma_\mu))$ by $\R^{d}\times\R^d\ni(x,y)\mapsto y-x$. These images are centered Gaussian distributions and their respective covariance matrices $(I_d-ODO^*)\Sigma_\mu(I_d-ODO^*)$ and $(I_d-OD^{-1} O^*)\Sigma_\nu(I_d-OD^{-1}O^*)$ are equal, which strengthens the equality of the traces which follows from \eqref{fb}.
Indeed $(I_d-D)O^*\Sigma_\mu O(I_d-D)\le (I_d-D)D^{-1}O^*\Sigma_\nu OD^{-1}(I_d-D)$ so that, with the equality of the traces, \begin{align*}
(I_d-D)O^*\Sigma_\mu O(I_d-D)=(D^{-1}-I_d)O^*\Sigma_\nu O(D^{-1}-I_d).
\end{align*}

\end{itemize}
\end{remark}

\subsection{Some properties of the Bures-Wasserstein distance}

We first present useful results on the Bures-Wasserstein distance, and refer to the companion paper~\cite{AJ_arxiv} for further details. 
We introduce the set of correlation matrices by $\mathfrak{C}_d= \{ C \in \cS^+_d : \forall i, C_{ii}=1 \}$. For $\Sigma\in \cS_d^+$, we 
define $$\textup{dg}(\Sigma)={\rm diag}(\Sigma_{11},\dots,\Sigma_{dd})\in S_d^+$$ the diagonal matrix obtained with the diagonal elements of $\Sigma$.
We will say that two matrices $\Sigma_1,\Sigma_2$ share the correlation matrix $C\in \mathfrak{C}_d$ if $$\Sigma_i =\textup{dg}(\Sigma_i)^{1/2} C\textup{dg}(\Sigma_i)^{1/2} , \ i \in \{1,2\}.$$
In this case, an optimal $\cW_2$ coupling between $\mathcal{N}_d(0,\Sigma_1)$ and $\mathcal{N}_d(0,\Sigma_2)$ can be easily constructed as follows. Let $Z\sim  \mathcal{N}_d(0,C)$ and define $X=\sqrt{{\rm dg}(\Sigma_1)}Z$, $Y=\sqrt{{\rm dg}(\Sigma_2)}Z$. Then, we have $X\sim \mathcal{N}_d(0,\Sigma_1)$, $Y\sim \mathcal{N}_d(0,\Sigma_2)$ and
\begin{align*}
\E[|X-Y|^2]=\sum_{i=1}^d   (\sqrt{(\Sigma_1)_{ii}}-\sqrt{(\Sigma_2)_{ii}})^2 \E[Z_{ii}^2]=\sum_{i=1}^d   \left(\sqrt{(\Sigma_1)_{ii}}-\sqrt{(\Sigma_2)_{ii}}\right)^2.
\end{align*} 
We check by calculations that the latter sum equals $\tr\left(\Sigma_1+\Sigma_2-2(\Sigma_1^{1/2}\Sigma_2\Sigma_1^{1/2})^{1/2}\right)$, which gives the optimality. 

In the general case, we first notice that $\BW^2(\Sigma_1,\Sigma_2)=\BW^2(O^*\Sigma_1 O,O^*\Sigma_2O)$ for any $O\in \mathcal{O}_d$, and show (see~\cite{AJ_arxiv}) that we can find $O$ such that $O^*\Sigma_1 O$ and $O^*\Sigma_2 O$ share the same correlation matrix. When $\Sigma_1\in \mathcal{S}^{+,*}_d$,  one can choose $O$ such that \begin{equation}\label{O_share_correl}
  O^*\Sigma_1^{-1/2}(\Sigma_1^{1/2}\Sigma_2 \Sigma_1^{1/2})^{1/2}\Sigma_1^{-1/2}O \text{ is diagonal.}\end{equation} This leads to the following theorem.
\begin{theorem}\label{thm_BW}
  Let $\Sigma_1,\Sigma_2 \in \mathcal{S}^+_d$. Then, there exists $O\in \mathcal{O}_d$ and $C \in \mathfrak{C}_d$ such that $O^*\Sigma_1 O$ and $O^*\Sigma_2 O$ share the correlation matrix~$C$. Besides, for any such $(O,C)$, we have 
  $$\BW^2(\Sigma_1,\Sigma_2)=\sum_{i=1}^d   \left(\sqrt{(O^*\Sigma_1 O)_{ii}}-\sqrt{(O^*\Sigma_2 O)_{ii}}\right)^2.$$
\end{theorem}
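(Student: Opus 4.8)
The plan is to reduce everything, via the orthogonal invariance of $\BW$, to the situation where the two matrices share a correlation matrix, and then to read off the formula from the explicit coupling already exhibited just before the statement. The invariance $\BW^2(\Sigma_1,\Sigma_2)=\BW^2(O^*\Sigma_1O,O^*\Sigma_2O)$ for every $O\in\cO_d$ is immediate from \eqref{w2gauss}, since $\tr(O^*MO)=\tr(M)$ and $((O^*\Sigma_1O)^{1/2}(O^*\Sigma_2O)(O^*\Sigma_1O)^{1/2})^{1/2}=O^*(\Sigma_1^{1/2}\Sigma_2\Sigma_1^{1/2})^{1/2}O$. So it is enough to prove (i) the existence of $O\in\cO_d$ and $C\in\mathfrak{C}_d$ such that $O^*\Sigma_1O$ and $O^*\Sigma_2O$ share $C$, and (ii) that whenever $S_1,S_2\in\cS_d^+$ share a correlation matrix one has $\BW^2(S_1,S_2)=\sum_{i=1}^d(\sqrt{(S_1)_{ii}}-\sqrt{(S_2)_{ii}})^2$; combining (i) with (ii) applied to $S_i=O^*\Sigma_iO$ then gives the theorem.

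For (i), in the non-degenerate case $\Sigma_1\in\cS_d^{+,*}$ I would take $O$ to diagonalize the symmetric positive semi-definite matrix $T:=\Sigma_1^{-1/2}(\Sigma_1^{1/2}\Sigma_2\Sigma_1^{1/2})^{1/2}\Sigma_1^{-1/2}$ (the matrix of the linear optimal transport map), which satisfies $T\Sigma_1T=\Sigma_2$. Writing $\Lambda=O^*TO$ (diagonal, with nonnegative entries) and $S_i=O^*\Sigma_iO$, one gets $S_2=\Lambda S_1\Lambda$, hence $(S_2)_{ij}=\Lambda_{ii}\Lambda_{jj}(S_1)_{ij}$; since $S_1\succ0$ has positive diagonal, $\Lambda_{ii}=\sqrt{(S_2)_{ii}/(S_1)_{ii}}$ and a short computation (with the usual care at indices where $(S_2)_{ii}=0$, where the corresponding row and column of $S_2$ vanish) shows that $S_1$ and $S_2$ both have correlation matrix $C_{ij}=(S_1)_{ij}/\sqrt{(S_1)_{ii}(S_1)_{jj}}$. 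The general case $\Sigma_1\in\cS_d^+$ follows by applying this to $\Sigma_1+\varepsilon I_d$ and extracting, as $\varepsilon\to0$, convergent subsequences of the resulting orthogonal matrices and correlation matrices (both ranging over compact sets) and passing to the limit; the full details, together with the more explicit prescription \eqref{O_share_correl}, are the object of the companion paper \cite{AJ_arxiv}, so here I would simply cite them.

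For (ii), let $D_i=\textup{dg}(S_i)$, take $Z\sim\cN_d(0,C)$ and set $X=D_1^{1/2}Z$, $Y=D_2^{1/2}Z$; then $X\sim\cN_d(0,S_1)$, $Y\sim\cN_d(0,S_2)$, and $\E[|X-Y|^2]=\sum_i(\sqrt{(S_1)_{ii}}-\sqrt{(S_2)_{ii}})^2$, which gives the bound ``$\le$''. For the matching lower bound (equivalently, optimality of this coupling), I would establish the identity $\tr\big((S_1^{1/2}S_2S_1^{1/2})^{1/2}\big)=\sum_i\sqrt{(S_1)_{ii}(S_2)_{ii}}$ and conclude via \eqref{w2gauss}. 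The key computation is that, setting $E:=(D_1D_2)^{1/2}$ (diagonal, since diagonal matrices commute), one has $\textup{eig}(S_1^{1/2}S_2S_1^{1/2})=\textup{eig}(S_1S_2)=\textup{eig}(CECE)=\textup{eig}((CE)^2)$ by cyclic invariance of the spectrum and $S_i=D_i^{1/2}CD_i^{1/2}$, while $CE$ is similar to the symmetric positive semi-definite matrix $E^{1/2}CE^{1/2}$; hence the eigenvalues of $S_1^{1/2}S_2S_1^{1/2}$ are the squares of those of $E^{1/2}CE^{1/2}$, so $\tr\big((S_1^{1/2}S_2S_1^{1/2})^{1/2}\big)=\tr(E^{1/2}CE^{1/2})=\tr(CE)=\sum_iE_{ii}=\sum_i\sqrt{(S_1)_{ii}(S_2)_{ii}}$ because $C_{ii}=1$. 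Substituting into \eqref{w2gauss} for $(S_1,S_2)$ and using the orthogonal invariance of the first paragraph concludes. The main obstacle is step (i) in the degenerate case: when $\Sigma_1$ is singular the optimal-map matrix $T$ is undefined and one must argue more carefully that a common correlation matrix still exists — precisely the point delegated to \cite{AJ_arxiv}; the spectral identity in (ii), by contrast, is routine once the regrouping $CECE=(CE)^2$ is spotted.
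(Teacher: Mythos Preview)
Your proof is correct and follows essentially the same route as the paper: orthogonal invariance of $\BW$, existence of $O$ via diagonalization of $\Sigma_1^{-1/2}(\Sigma_1^{1/2}\Sigma_2\Sigma_1^{1/2})^{1/2}\Sigma_1^{-1/2}$ in the non-degenerate case (with the singular case deferred to \cite{AJ_arxiv}), and optimality of the diagonal coupling by computing $\tr\big((S_1^{1/2}S_2S_1^{1/2})^{1/2}\big)$. You supply more detail for the trace identity than the paper, which merely says ``we check by calculations,'' but the structure is identical.
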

We now state a Lemma that will be used in some calculations.
\begin{lemma}\label{lem_calcD}
  Let $\Sigma_1,\Sigma_2 \in \cS_d^{+}$. Let $O\in \mathcal{O}_d$ and $C \in \mathfrak{C}_d$ such that $O^*\Sigma_1 O$ and $O^*\Sigma_2 O$ share the correlation matrix~$C$. Let $D$ be the diagonal matrix such that $D_{ii}=\sqrt{\frac{(O^*\Sigma_2 O)_{ii}}{(O^*\Sigma_1 O)_{ii}}}$ if $(O^*\Sigma_1 O)_{ii}>0$ and $D_{ii}=1$ otherwise. Then, we have
  \begin{equation}
   (O^*\Sigma_1 O)^{\frac 12} D(O^*\Sigma_1 O)^{\frac 12} = O^* (\Sigma_1^{\frac 12} \Sigma_2 \Sigma_1^{\frac 12})^{\frac 12}  O.\label{egqd}
  \end{equation}
    If $\Sigma_1 \in \cS_d^{+,*}$, we furthermore have
  \begin{equation}
   D=O^* \Sigma_1^{- \frac 12}\left(\Sigma_1^{\frac 12} \Sigma_2 \Sigma_1^{\frac 12} \right)^{\frac 12}  \Sigma_1^{- \frac 12} O=\textup{dg}(O^*\Sigma_2 O)^{\frac 12}\textup{dg}(O^*\Sigma_1 O)^{- \frac 12}.\label{egqd2}
  \end{equation}
\end{lemma}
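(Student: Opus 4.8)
The plan is to reduce both equalities to an elementary identity relating the matrices $A:=O^*\Sigma_1 O$ and $B:=O^*\Sigma_2 O$, and then to invoke uniqueness of the positive semidefinite square root. Since $O$ is orthogonal, one has $(O^*\Sigma_1 O)^{1/2}=A^{1/2}$, $\Sigma_1^{1/2}=OA^{1/2}O^*$ and $\Sigma_2=OBO^*$, whence $\Sigma_1^{1/2}\Sigma_2\Sigma_1^{1/2}=O\,(A^{1/2}BA^{1/2})\,O^*$ and therefore $O^*(\Sigma_1^{1/2}\Sigma_2\Sigma_1^{1/2})^{1/2}O=(A^{1/2}BA^{1/2})^{1/2}$. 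Thus \eqref{egqd} is equivalent to the purely matrix-theoretic identity $A^{1/2}DA^{1/2}=(A^{1/2}BA^{1/2})^{1/2}$.

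To prove the latter, I would first observe that $A^{1/2}DA^{1/2}$ is symmetric positive semidefinite, because $D$ has nonnegative diagonal and $A^{1/2}DA^{1/2}=(A^{1/2}D^{1/2})(A^{1/2}D^{1/2})^*$; its square equals $A^{1/2}(DAD)A^{1/2}$. By uniqueness of the positive semidefinite square root it then suffices to check $A^{1/2}(DAD)A^{1/2}=A^{1/2}BA^{1/2}$. Here I would use the hypothesis that $A$ and $B$ share the correlation matrix $C$, i.e.\ $A_{ij}=\sqrt{A_{ii}A_{jj}}\,C_{ij}$ and $B_{ij}=\sqrt{B_{ii}B_{jj}}\,C_{ij}$ for all $i,j$. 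Setting $P=\{i:A_{ii}>0\}$, the definition of $D$ gives $D_{ii}A_{ij}D_{jj}=\sqrt{B_{ii}}\,C_{ij}\sqrt{B_{jj}}=B_{ij}$ for $i,j\in P$, so $DAD$ and $B$ agree on the block indexed by $P\times P$. On the other hand, when $A_{ii}=0$ positive semidefiniteness of $A$ forces the whole $i$-th row and column of $A$ to vanish, hence also those of $A^{1/2}$ (since $\|A^{1/2}e_i\|^2=e_i^*Ae_i=0$); consequently $A^{1/2}MA^{1/2}$ depends only on the $P\times P$ block of $M$. Applying this with $M=DAD$ and $M=B$ yields \eqref{egqd}.

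For \eqref{egqd2} I would specialize to $\Sigma_1\in\cS_d^{+,*}$, so that $A=O^*\Sigma_1 O$ is positive definite and $P=\{1,\dots,d\}$. Then, directly from the definition, $D_{ii}=\sqrt{(O^*\Sigma_2O)_{ii}}/\sqrt{(O^*\Sigma_1O)_{ii}}$ for every $i$, which is exactly $D=\textup{dg}(O^*\Sigma_2O)^{1/2}\textup{dg}(O^*\Sigma_1O)^{-1/2}$; and multiplying \eqref{egqd} on both sides by the invertible matrix $A^{-1/2}=O^*\Sigma_1^{-1/2}O$ produces $D=O^*\Sigma_1^{-1/2}(\Sigma_1^{1/2}\Sigma_2\Sigma_1^{1/2})^{1/2}\Sigma_1^{-1/2}O$, the remaining equality.

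The computation is routine; the one point requiring care is the singular case, where $DAD\ne B$ in general (the diagonal entries $B_{ii}$ with $A_{ii}=0$ are lost under conjugation by $D$). One therefore cannot simply claim $DAD=B$ and must instead argue, as above, through the vanishing rows and columns of $A^{1/2}$ — which is precisely why the statement separates the general semidefinite case from the definite case $\Sigma_1\in\cS_d^{+,*}$.
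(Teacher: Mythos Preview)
Your proof is correct and follows essentially the same route as the paper's: both reduce to $A=O^*\Sigma_1O$, $B=O^*\Sigma_2O$, square $A^{1/2}DA^{1/2}$, and use uniqueness of the positive semidefinite square root together with the fact that the rows and columns of $A^{1/2}$ indexed by $\{i:A_{ii}=0\}$ vanish. The paper phrases the latter via the projector identity $A^{1/2}P_1=A^{1/2}$ (where $P_1$ is the diagonal indicator of $\{i:A_{ii}>0\}$), while you argue equivalently that $A^{1/2}MA^{1/2}$ depends only on the $P\times P$ block of $M$; your justification $\|A^{1/2}e_i\|^2=e_i^*Ae_i=0$ is in fact slightly more explicit than the paper's.
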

\begin{proof}
Since $O^*\Sigma_1 O= \textup{dg}(O^*\Sigma_1 O)^{\frac 12}C\textup{dg}(O^*\Sigma_1 O)^{\frac 12}$, we have $DO^*\Sigma_1 OD=P_1 O^*\Sigma_2 O P_1$, where $P_1$ is the diagonal matrix such that $(P_1)_{ii}=1$ if $(O^*\Sigma_1 O)_{ii}>0$ and $(P_1)_{ii}=0$ otherwise. Let $K_1=\{i : (O^*\Sigma_1 O)_{ii}=0\}$.
We have $(O^*\Sigma_1 O)_{ij}=0$ for $i,j \in K_1$ and thus $(O^*\Sigma_1 O)^{\frac 12}_{ij}=0$ for $i,j \in K_1$. This gives  $(O^*\Sigma_1 O)^{\frac 12} P_1=(O^*\Sigma_1 O)^{\frac 12}=O^*\Sigma^{\frac 12}_1 O $ and  $ P_1 (O^*\Sigma_1 O)^{\frac 12} =(O^*\Sigma_1 O)^{\frac 12} =O^*\Sigma^{\frac 12}_1 O$. We therefore have
\begin{align*}
  \left((O^*\Sigma_1 O)^{\frac 12} D(O^*\Sigma_1 O)^{\frac 12}   \right)^2 &=(O^*\Sigma_1 O)^{\frac 12}   P_1 O^*\Sigma_2 O P_1 (O^*\Sigma_1 O)^{\frac 12}
  \\
  &= O^* \Sigma_1^{\frac 12} \Sigma_2 \Sigma_1^{\frac 12}  O.
\end{align*}
This gives the first claim, and the first equality in the second claim easily follows. The  second equality is an immediate consequence of the definition of $D$.
\end{proof}

We now present a Lemma that will be useful to obtain the first order optimality condition. 
\begin{lemma}\label{lem_trace}
Let $\Sigma,\Sigma_1 \in \mathcal{S}_d^+$ be positive definite and $\Delta\in \cS_d$ be a symmetric matrix. Then, $f(t)=\tr((\Sigma^{1/2}(\Sigma_1+t\Delta)\Sigma^{1/2})^{1/2})$ and $g(t)=\BW^2(\Sigma,\Sigma_1+t\Delta)$ are $C^1$ on the open set $\{t\in\R:\Sigma_1+t\Delta\mbox{ is positive definite}\}$ which contains $0$ and we have 
\begin{align*}
  f'(t)&=\frac 12 \tr\left(\Sigma^{1/2}(\Sigma^{1/2}(\Sigma_1+t\Delta)\Sigma^{1/2})^{-1/2}\Sigma^{1/2}\Delta \right)\\
  g'(t)&=\tr\left(\left(I_d-\Sigma^{1/2}(\Sigma^{1/2}(\Sigma_1+t\Delta)\Sigma^{1/2})^{-1/2}\Sigma^{1/2}\right)\Delta \right).
\end{align*}
\end{lemma}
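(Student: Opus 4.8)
The plan is to reduce the statement to the $C^\infty$ regularity of the matrix square root on the cone $\cS_d^{+,*}$ of positive definite matrices, followed by a short trace computation. First I would check that $U:=\{t\in\R:\Sigma_1+t\Delta\in\cS_d^{+,*}\}$ is open and contains $0$: it is the preimage of the open set $\cS_d^{+,*}$ under the continuous affine map $t\mapsto\Sigma_1+t\Delta$, and $0\in U$ since $\Sigma_1$ is positive definite. On $U$, because $\Sigma^{1/2}$ is invertible, congruence shows $M(t):=\Sigma^{1/2}(\Sigma_1+t\Delta)\Sigma^{1/2}\in\cS_d^{+,*}$, and $t\mapsto M(t)$ is affine with $M'(t)=\Sigma^{1/2}\Delta\Sigma^{1/2}$.

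Next I would use that $A\mapsto A^{1/2}$ is $C^\infty$ on $\cS_d^{+,*}$. This can be obtained by viewing $A^{1/2}$ as the inverse of the $C^\infty$ map $X\mapsto X^2$ on $\cS_d^{+,*}$, whose differential at a positive definite $X$ is the Lyapunov operator $H\mapsto XH+HX$ on symmetric matrices; the latter has eigenvalues equal to the pairwise sums of the (strictly positive) eigenvalues of $X$, hence is invertible, so the inverse function theorem applies. (An alternative, avoiding the inverse function theorem, is to differentiate under the integral sign in $A^{1/2}=\frac1\pi\int_0^\infty A(sI_d+A)^{-1}\,s^{-1/2}\,ds$.) Composing, $t\mapsto S(t):=M(t)^{1/2}$ is $C^1$ on $U$, hence so is $f(t)=\tr(S(t))$; and since $g(t)=\tr(\Sigma)+\tr(\Sigma_1)+t\,\tr(\Delta)-2f(t)$ by the Bures--Wasserstein formula \eqref{w2gauss}, $g$ is $C^1$ on $U$ as well.

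It then remains to identify $f'$. Differentiating $S(t)^2=M(t)$ gives $S'(t)S(t)+S(t)S'(t)=\Sigma^{1/2}\Delta\Sigma^{1/2}$. I would left-multiply by $S(t)^{-1}$ (legitimate since $S(t)\in\cS_d^{+,*}$) and take the trace; cyclicity turns $\tr\big(S(t)^{-1}S'(t)S(t)\big)$ into $\tr(S'(t))$, yielding $2\tr(S'(t))=\tr\big(S(t)^{-1}\Sigma^{1/2}\Delta\Sigma^{1/2}\big)$. Since $S(t)^{-1}=\big(\Sigma^{1/2}(\Sigma_1+t\Delta)\Sigma^{1/2}\big)^{-1/2}$, one further use of cyclicity gives $f'(t)=\tfrac12\tr\big(\Sigma^{1/2}(\Sigma^{1/2}(\Sigma_1+t\Delta)\Sigma^{1/2})^{-1/2}\Sigma^{1/2}\Delta\big)$, and then $g'(t)=\tr(\Delta)-2f'(t)=\tr\big((I_d-\Sigma^{1/2}(\Sigma^{1/2}(\Sigma_1+t\Delta)\Sigma^{1/2})^{-1/2}\Sigma^{1/2})\Delta\big)$, which are the claimed formulas.

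I expect the main (and essentially only) obstacle to be writing out carefully the $C^1$ regularity of the matrix square root and the attendant chain rule for matrix-valued functions; everything downstream of the identity $S'S+SS'=M'$ is a two-line manipulation using only the cyclic invariance of the trace.
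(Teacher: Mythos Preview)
Your proposal is correct and follows essentially the same approach as the paper: both use the $C^\infty$ regularity of the square root on $\cS_d^{+,*}$ (justified via invertibility of the Lyapunov operator $H\mapsto XH+HX$), obtain the Sylvester equation $S'S+SS'=\Sigma^{1/2}\Delta\Sigma^{1/2}$ for the derivative of the square root, and extract $\tr(S')$ by multiplying by $S^{-1}$ and invoking trace cyclicity. The paper phrases this by first naming $\delta:=\mathcal L^{-1}(\Sigma^{1/2}\Delta\Sigma^{1/2})$ and then checking $(\tilde\Sigma_1^{1/2}+t\delta)^2=\tilde\Sigma_1+t\Sigma^{1/2}\Delta\Sigma^{1/2}+t^2\delta^2$, but this $\delta$ is exactly your $S'(0)$, so the two arguments coincide.
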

\begin{proof}
  Without loss of generality, it is sufficient to calculate the derivative at $t=0$. 
  We consider $\mathcal{L}:\mathcal{S}_d\to\mathcal{S}_d$ defined by  $\mathcal{L}(Y)=\tilde{\Sigma}_1^{1/2}Y+Y\tilde{\Sigma}_1^{1/2}$, with $\tilde{\Sigma}_1=\Sigma^{1/2}\Sigma_1\Sigma^{1/2}$ positive definite. Then, by~\cite[Lemma B.1]{AKR16}, $\mathcal{L}$ is invertible and $\delta=\mathcal{L}^{-1}(\Sigma^{1/2}\Delta \Sigma^{1/2})$ satisfies  $\delta+\tilde{\Sigma}_1^{-1/2}\delta\tilde{\Sigma}_1^{1/2}= \tilde{\Sigma}_1^{-1/2}\Sigma^{1/2}\Delta \Sigma^{1/2}$ so that $\tr(\delta)=\frac 1 2 \tr(\tilde{\Sigma}_1^{-{1/2}} \Sigma^{1/2}\Delta \Sigma^{1/2})$, by the cyclic property. By definition of $\delta$, we have 
  $$\left( (\tilde{\Sigma}_1)^{1/2} + t\delta\right)^2=\tilde{\Sigma}_1 +t \Sigma^{1/2}\Delta \Sigma^{1/2} +t^2 \delta^2.$$ 
  Since the square root matrix is $C^\infty$ on positive definite matrices (see e.g.~\cite[p.~134]{RoWi00}), we get that 
  $$(\Sigma^{1/2}(\Sigma_1+t\Delta)\Sigma^{1/2})^{1/2} =_{t\to 0^+} \tilde{\Sigma}_1^{1/2} +t \delta +O(t^2),$$
  and thus 
  $$\tr\left((\Sigma^{1/2}(\Sigma_1+t\Delta)\Sigma^{1/2})^{1/2}\right)=_{t\to 0^+} \tr\left((\Sigma^{1/2}\Sigma_1\Sigma^{1/2})^{1/2}\right)+t \frac 1 2 \tr(\tilde{\Sigma}_1^{-1/2} \Sigma^{1/2}\Delta \Sigma^{1/2}) +O(t^2),$$ 
  which gives $f'(0)=\frac 1 2 \tr(\tilde{\Sigma}_1^{-1/2} \Sigma^{1/2}\Delta \Sigma^{1/2})=\frac 1 2 \tr(\Sigma^{1/2} \tilde{\Sigma}_1^{-1/2} \Sigma^{1/2}\Delta )$. 
  
  Since $g(t)=\tr(\Sigma)+\tr(\Sigma_1+t\Delta)-2f(t)$, we then deduce $g'(0)$. 
\end{proof}
\begin{remark}
  Lemma 3.1 can also be deduced from more general differentiability result of the Wasserstein distance, either on the Wasserstein space of probability measures (see~\cite[Section 10.2]{AmGiSa08}) or on a lifted probability space, which we consider here. On an atomless probability space, we consider $Z\sim \mathcal{N}_d(0,I_d)$ and define $X_t=(\Sigma_1+t\Delta)^{1/2} Z \sim \mathcal{N}_d(0,\Sigma_1+t\Delta)$. The map $T(x)=\Sigma^{1/2}(\Sigma^{1/2}\Sigma_1\Sigma^{1/2})^{-1/2}\Sigma^{1/2} x$ is $\cW_2$-optimal between $\mathcal{N}_d(0,\Sigma_1)=\mathcal{L}(X_0)$ and  $\mathcal{N}_d(0,\Sigma)$. With~\cite[Theorem 3.2]{AlJo20} and the chain rule, we deduce that
  \begin{align*}
    g'(0)&=\E[2(I_d-\Sigma^{1/2}(\Sigma^{1/2}\Sigma_1\Sigma^{1/2})^{-1/2}\Sigma^{1/2})\Sigma_1^{1/2} Z \cdot {\bf D} Z ]\\
    &=\tr(2(I_d-\Sigma^{1/2}(\Sigma^{1/2}\Sigma_1\Sigma^{1/2})^{-1/2}\Sigma^{1/2})\Sigma_1^{1/2}{\bf D}),
  \end{align*}
  where ${\bf D}\in \cS_d$ is the derivative of $(\Sigma_1+t\Delta)^{1/2}$ at $t=0$ that thus satisfies ${\bf D} \Sigma_1^{1/2}+\Sigma_1^{1/2} {\bf D}= \Delta$. For any $M\in \cS_d$, we have $\tr(M\Sigma_1^{1/2} {\bf D})=\tr({\bf D} \Sigma_1^{1/2} M)$ by transposition and therefore  $\tr(M\Sigma_1^{1/2} {\bf D})=\frac 12 \tr(M \Delta )$. With the above equation, we finally get $g'(0)=\tr((I_d-\Sigma^{1/2}(\Sigma^{1/2}(\Sigma_1+t\Delta)\Sigma^{1/2})^{-1/2}\Sigma^{1/2})\Sigma_1^{1/2}\Delta)$.
  \end{remark}

 \subsection{Proof of Theorem~\ref{thmprojgauss}}
 
The proof of Theorem \ref{thmprojgauss} relies on the next proposition.
\begin{proposition}\label{prop_corot}
   Let $\Sigma_\mu,\Sigma_\nu\in\cS_d^+$ and $O\in\cO_d$ be such that $DO^*\Sigma_\mu OD\le O^*\Sigma_\nu O$ where $D={\rm diag}\left(1\wedge\sqrt{\frac{(O^*\Sigma_\nu O)_{11}}{(O^*\Sigma_\mu O)_{11}}},\cdots,1\wedge\sqrt{\frac{(O^*\Sigma_\nu O)_{dd}}{(O^*\Sigma_\mu O)_{dd}}}\right)$. Then, we have
\begin{align*}&\cI_2(\Sigma_\mu,\Sigma_\nu)=ODO^*\Sigma_\mu O DO^*,\\& \BW^2(\Sigma_\mu,\cI_2(\Sigma_\mu,\Sigma_\nu))=\sum_{i=1}^d\left(\sqrt{(O^*\Sigma_\mu O)_{ii}}-\sqrt{(O^*\Sigma_\nu O)_{ii}}\right)_+^2.\end{align*}
\end{proposition}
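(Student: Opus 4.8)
The plan is to combine Proposition~\ref{prop_projG} with a coordinatewise scalar estimate. By Proposition~\ref{prop_projG} applied with $m_\mu=m_\nu=0$, the matrix $\cI_2(\Sigma_\mu,\Sigma_\nu)$ is the \emph{unique} minimizer of
$$G(\Sigma):=\tr\bigl(\Sigma-2(\Sigma_\mu^{1/2}\Sigma\Sigma_\mu^{1/2})^{1/2}\bigr)$$
over $\{\Sigma\in\cS^+_d:\Sigma\le\Sigma_\nu\}$, so it suffices to prove that $P:=ODO^*\Sigma_\mu ODO^*$ is this minimizer. Write $A:=O^*\Sigma_\mu O$, $B:=O^*\Sigma_\nu O$, $a_i:=A_{ii}$, $b_i:=B_{ii}$. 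First I would record the reductions to the rotated frame: since $\Sigma_\mu^{1/2}=OA^{1/2}O^*$, the map $\Sigma\mapsto\tilde\Sigma:=O^*\Sigma O$ carries the feasible set bijectively onto $\{\tilde\Sigma\in\cS^+_d:\tilde\Sigma\le B\}$ and satisfies $G(\Sigma)=F(\tilde\Sigma):=\tr\bigl(\tilde\Sigma-2(A^{1/2}\tilde\Sigma A^{1/2})^{1/2}\bigr)$; moreover $O^*PO=DAD$ lies in $\cS^+_d$ and satisfies $DAD\le B$ by the standing hypothesis, so $P$ is feasible. It remains to show $F(\tilde\Sigma)\ge F(DAD)$ for every admissible $\tilde\Sigma$.

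The heart of the proof is the elementary inequality
$$\tr\bigl((A^{1/2}\tilde\Sigma A^{1/2})^{1/2}\bigr)\le\sum_{i=1}^d\sqrt{A_{ii}\tilde\Sigma_{ii}},\qquad A,\tilde\Sigma\in\cS^+_d,$$
which I would derive from the semidefinite representation $\tr((A^{1/2}\tilde\Sigma A^{1/2})^{1/2})=\sup\{\tr(\Theta):\bigl(\begin{smallmatrix}A&\Theta\\\Theta^*&\tilde\Sigma\end{smallmatrix}\bigr)\in\cS^+_{2d}\}$ behind formula~\eqref{w2gauss}: for any admissible $\Theta$, the $2\times2$ principal submatrix of that block matrix on rows and columns $\{i,d+i\}$ is positive semidefinite, forcing $\Theta_{ii}^2\le A_{ii}\tilde\Sigma_{ii}$, whence $\tr(\Theta)=\sum_i\Theta_{ii}\le\sum_i\sqrt{A_{ii}\tilde\Sigma_{ii}}$. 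Since $\tr(\tilde\Sigma)=\sum_i\tilde\Sigma_{ii}$, this yields $F(\tilde\Sigma)\ge\sum_{i=1}^d\phi_i(\tilde\Sigma_{ii})$, where $\phi_i(t):=t-2\sqrt{a_i t}$; the lower bound depends only on the diagonal of $\tilde\Sigma$, so it decouples into $d$ one-dimensional problems.

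Because $\tilde\Sigma\le B$ forces $\tilde\Sigma_{ii}\in[0,b_i]$, I would minimize each $\phi_i$ over $[0,b_i]$: the substitution $u=\sqrt t$ turns $\phi_i$ into $(u-\sqrt{a_i})^2-a_i$, so when $a_i>0$ the minimum over $[0,b_i]$ is attained at $t=a_i\wedge b_i=d_i^2 a_i$ (with $d_i=1\wedge\sqrt{b_i/a_i}$), and when $a_i=0$ at $t=0=d_i^2 a_i$; in every case the minimal value equals $a_i(d_i^2-2d_i)$. On the other hand, $A^{1/2}DADA^{1/2}=(A^{1/2}DA^{1/2})^2$ with $A^{1/2}DA^{1/2}\in\cS^+_d$, hence $(A^{1/2}DADA^{1/2})^{1/2}=A^{1/2}DA^{1/2}$, and by the cyclic property $F(DAD)=\tr(D^2A)-2\tr(DA)=\sum_{i=1}^d a_i(d_i^2-2d_i)$. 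Therefore $F(\tilde\Sigma)\ge F(DAD)$ for all admissible $\tilde\Sigma$, so $P$ minimizes $G$, and by the uniqueness part of Proposition~\ref{prop_projG}, $\cI_2(\Sigma_\mu,\Sigma_\nu)=P=ODO^*\Sigma_\mu ODO^*$. Finally $\BW^2(\Sigma_\mu,\cI_2(\Sigma_\mu,\Sigma_\nu))=\tr(\Sigma_\mu)+G(P)=\tr(A)+F(DAD)=\sum_{i=1}^d a_i(1-d_i)^2$, and a three-case check ($b_i\ge a_i$; $0<b_i<a_i$; $a_i=0$, using the convention on $d_i$) gives $a_i(1-d_i)^2=(\sqrt{a_i}-\sqrt{b_i})_+^2$, which is the claimed formula. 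The only genuine difficulty is isolating the scalar inequality and recognizing that the resulting separable lower bound for $G$ is minimized over the box $\prod_{i=1}^d[0,b_i]$ exactly at the diagonal of $DAD$; the hypothesis $DAD\le B$ enters only to make $P$ feasible and to keep these scalar minimizers inside the box.
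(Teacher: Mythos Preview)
Your proof is correct and takes a genuinely different route from the paper.

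The paper reduces to $O=I_d$ via Lemma~\ref{lemrot}, then treats the positive definite case first: it computes the directional derivative of $\Sigma\mapsto\BW^2(\Sigma_\mu,\Sigma)$ at $\Sigma_\cI=D\Sigma_\mu D$ using Lemma~\ref{lem_trace}, checks that this derivative is nonnegative in every feasible direction, and invokes the convexity Lemma~\ref{lemconvw2} to conclude. To cover singular $\Sigma_\mu,\Sigma_\nu$ it then builds rather intricate perturbations $\Sigma_\mu^{(\varepsilon)},\Sigma_\nu^{(\varepsilon)}$ preserving the inequality $D^{(\varepsilon)}\Sigma_\mu^{(\varepsilon)}D^{(\varepsilon)}\le\Sigma_\nu^{(\varepsilon)}$ and passes to the limit using the continuity of $\cI_2$ (Propositions~\ref{lipmu} and~\ref{holdnu}).

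Your argument bypasses all of this. The key step is the scalar trace inequality $\tr\bigl((A^{1/2}\tilde\Sigma A^{1/2})^{1/2}\bigr)\le\sum_i\sqrt{A_{ii}\tilde\Sigma_{ii}}$, obtained directly from the $2\times 2$ principal submatrices in the block-matrix variational formula for $\tr((A^{1/2}\tilde\Sigma A^{1/2})^{1/2})$. This decouples the problem into $d$ independent one-variable minimizations whose joint optimum is attained precisely at the diagonal of $DAD$, and the identity $(A^{1/2}DADA^{1/2})^{1/2}=A^{1/2}DA^{1/2}$ shows that $DAD$ saturates the bound. The hypothesis $DAD\le B$ is used only for feasibility, and uniqueness in Proposition~\ref{prop_projG} closes the argument. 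This works uniformly for all $\Sigma_\mu,\Sigma_\nu\in\cS_d^+$: no differentiation, no convexity lemma, no approximation by positive definite matrices. The price is that your proof does not produce the first-order optimality condition \eqref{derw22}, which the paper reuses elsewhere (notably in the projected gradient algorithm and in the proof of Proposition~\ref{prop_existsO}); but for the proposition itself your approach is shorter and more elementary.
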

The proof of Proposition \ref{prop_corot} relies on the following lemma and is postponed after its proof.  
\begin{lemma}\label{lemrot}
  For $\Sigma_\mu,\Sigma_\nu\in\cS^+_d$ and $O\in\cO_d$, we have $$\cI_2(\Sigma_\mu,\Sigma_\nu)=O\cI_2(O^*\Sigma_\mu O,O^*\Sigma_\nu O)O^* \text{ and } \cJ_2(\Sigma_\mu,\Sigma_\nu)=O\cJ_2(O^*\Sigma_\mu O,O^*\Sigma_\nu O)O^*.$$
\end{lemma}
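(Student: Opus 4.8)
The plan is to exploit that $f\colon x\mapsto O^*x$ is a linear isometry of $\R^d$ with inverse $x\mapsto Ox$, so that the entire minimization problem defining each projection is transported without change: linear maps preserve the convex order, isometries preserve $\cW_2$, and $f\#\cN_d(0,\Sigma)=\cN_d(0,O^*\Sigma O)$ for every $\Sigma\in\cS^+_d$. Since the projections $\cI_2$ and $\cJ_2$ on the (centered) matrix level are defined through the measure-level projections, the sought identities will drop out once the measure-level projections are shown to commute with $f\#$.

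First I would record three elementary facts. (i) The pushforward $f\#$ is a bijection of $\cP_2(\R^d)$ onto itself, with inverse the pushforward by $f^{-1}$, because $|f(x)|=|x|$ keeps second moments finite. (ii) For $\eta,\theta\in\cP_2(\R^d)$ one has $\eta\le_{cx}\theta$ if and only if $f\#\eta\le_{cx}f\#\theta$: indeed, if $g\colon\R^d\to\R$ is convex then $g\circ f$ is convex and $\int g\,d(f\#\eta)=\int g\circ f\,d\eta$, which gives one implication, and the converse follows by applying this to $f^{-1}$, which is also linear. (iii) $\cW_2(f\#\eta,f\#\theta)=\cW_2(\eta,\theta)$, because $\pi\mapsto (f,f)\#\pi$ is a bijection from $\Pi(\eta,\theta)$ onto $\Pi(f\#\eta,f\#\theta)$ preserving the transport cost, since $|f(x)-f(y)|=|f(x-y)|=|x-y|$.

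Combining (i)--(iii) with $f\#\cN_d(0,\Sigma_\mu)=\cN_d(0,O^*\Sigma_\mu O)$ and $f\#\cN_d(0,\Sigma_\nu)=\cN_d(0,O^*\Sigma_\nu O)$, the map $f\#$ restricts to a bijection between $\{\eta\le_{cx}\cN_d(0,\Sigma_\nu)\}$ and $\{\tilde\eta\le_{cx}\cN_d(0,O^*\Sigma_\nu O)\}$ along which $\cW_2(\cN_d(0,\Sigma_\mu),\cdot)$ and $\cW_2(\cN_d(0,O^*\Sigma_\mu O),\cdot)$ have the same value; hence the two infima coincide and $f\#$ is a bijection between their sets of minimizers. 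By the uniqueness of the minimizer on each side (Proposition~\ref{prop_projG}), this forces $\cI_2(\cN_d(0,O^*\Sigma_\mu O),\cN_d(0,O^*\Sigma_\nu O))=f\#\cI_2(\cN_d(0,\Sigma_\mu),\cN_d(0,\Sigma_\nu))=\cN_d(0,O^*\cI_2(\Sigma_\mu,\Sigma_\nu)O)$, i.e. $\cI_2(O^*\Sigma_\mu O,O^*\Sigma_\nu O)=O^*\cI_2(\Sigma_\mu,\Sigma_\nu)O$; conjugating back by $O$ yields $\cI_2(\Sigma_\mu,\Sigma_\nu)=O\cI_2(O^*\Sigma_\mu O,O^*\Sigma_\nu O)O^*$.

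For $\cJ_2$ the same transport argument shows that $f\#$ maps the set of $\cW_2$-projections of $\cN_d(0,\Sigma_\nu)$ on $\{\eta:\cN_d(0,\Sigma_\mu)\le_{cx}\eta\}$ bijectively onto the set of $\cW_2$-projections of $\cN_d(0,O^*\Sigma_\nu O)$ on $\{\eta:\cN_d(0,O^*\Sigma_\mu O)\le_{cx}\eta\}$. The only point requiring care is that these minimizers need not be unique when $\Sigma_\nu$ is singular; however, $\eta$ is Gaussian if and only if $f\#\eta$ is, so $f\#$ also restricts to a bijection between the Gaussian members of the two sets, and by Proposition~\ref{propj2} there is exactly one such on each side. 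Identifying it with $\cN_d(0,\cJ_2(\Sigma_\nu,\Sigma_\mu))$ (resp. $\cN_d(0,\cJ_2(O^*\Sigma_\nu O,O^*\Sigma_\mu O))$) and reading off covariance matrices gives $\cJ_2(O^*\Sigma_\nu O,O^*\Sigma_\mu O)=O^*\cJ_2(\Sigma_\nu,\Sigma_\mu)O$, hence the claimed identity after conjugation by $O$. There is no genuine obstacle beyond this bookkeeping around non-uniqueness, which the compatibility of $f\#$ with Gaussianity settles immediately.
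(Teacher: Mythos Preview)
Your argument is correct. Both your proof and the paper's rest on the same principle---orthogonal invariance of the optimization problem---but you apply it at the measure level (invariance of $\cW_2$ and of the convex order under the isometry $x\mapsto O^*x$, then reading off covariances), whereas the paper works directly at the matrix level: since $\cI_2(\Sigma_\mu,\Sigma_\nu)$ and $\cJ_2(\Sigma_\nu,\Sigma_\mu)$ are defined as the unique minimizers of $\BW^2(\Sigma_\mu,\cdot)$ over $\{\Sigma\le\Sigma_\nu\}$ and of $\BW^2(\Sigma_\nu,\cdot)$ over $\{\Sigma\ge\Sigma_\mu\}$ (Propositions~\ref{prop_projG} and~\ref{propj2}), it suffices to observe that $\Sigma\le\tilde\Sigma\iff O^*\Sigma O\le O^*\tilde\Sigma O$ and $\BW^2(\Sigma,\tilde\Sigma)=\BW^2(O^*\Sigma O,O^*\tilde\Sigma O)$. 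The paper's route is shorter because the reduction to matrices has already absorbed the measure-level subtleties; in particular, the possible non-uniqueness of the $\cJ$ projection that you carefully handle via the unique Gaussian representative is already resolved by the matrix-level uniqueness in Proposition~\ref{propj2}. Your approach has the virtue of being self-contained at the measure level and making the role of Gaussianity in the $\cJ_2$ case fully explicit.
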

\begin{proof}[Proof of Lemma \ref{lemrot}]
  This directly follows from the equivalence $\Sigma\le\tilde\Sigma\iff O^*\Sigma O\le O^*\tilde \Sigma O$ and the equality $\BW^2(\Sigma,\tilde \Sigma)=\BW^2(O^*\Sigma O,O^*\tilde \Sigma O)$ that can  be deduced from~\eqref{w2gauss}, the cyclic property of the trace and the equality $(O^*\Sigma O)^{1/2}=O^*\Sigma^{1/2}O$.
\end{proof}
\begin{proof}[Proof of Proposition \ref{prop_corot}]
From Lemma~\ref{lemrot}, it is sufficient to prove the result for $O=I_d$, which we assume now.  

We set $\Sigma_{\cI}=D\Sigma_\mu D$. Let us first conclude when $\Sigma_\mu$, $\Sigma_\nu$ and therefore $\Sigma_{\cI}$ are positive definite before checking that we can get rid of this additional assumption. Since $\Sigma_\mu^{1/2}D\Sigma_\mu D\Sigma_\mu^{1/2}=\Sigma_\mu^{1/2}D\Sigma_\mu^{1/2}\Sigma_\mu^{1/2}D\Sigma_\mu^{1/2}$, $(\Sigma_\mu^{1/2}\Sigma_{\cI}\Sigma_\mu^{1/2})^{1/2}=\Sigma_\mu^{1/2}D\Sigma_\mu^{1/2}
 $.
By the cyclic property of the trace, \begin{align*}
                             \BW^2(\Sigma_\mu,\Sigma_\cI)&={\rm tr}\left(\Sigma_\mu+\Sigma_{\cI}-2D\Sigma_\mu\right)\\&=\sum_{i=1}^d\left((\Sigma_\mu)_{ii}+(\Sigma_\mu)_{ii}\wedge (\Sigma_\nu)_{ii}-2\sqrt{(\Sigma_\mu)_{ii}}\left(\sqrt{(\Sigma_\mu)_{ii}}\wedge \sqrt{(\Sigma_\nu)_{ii}}\right)\right)\\&=\sum_{i=1}^d\left(\sqrt{(\Sigma_\mu)_{ii}}-\sqrt{(\Sigma_\nu)_{ii}}\right)_+^2.
\end{align*}
Let $\Sigma\in\cS^+_d$ be such that $\Sigma\le\Sigma_\nu$ and $\Delta=\Sigma-\Sigma_{\cI}$. Since $\Sigma_\mu^{1/2}(\Sigma_\mu^{1/2}\Sigma_{\cI}\Sigma_\mu^{1/2})^{-1/2}\Sigma_\mu^{1/2}=D^{-1}$, by Lemma \ref{lem_trace},  
\begin{align}\frac{d}{dt}\BW^2(\Sigma_\mu,\Sigma_{\cI}+t\Delta)\big|_{t=0}&={\rm tr}((I_d-D^{-1})\Delta)\label{derw22}\\&={\rm tr}((D^{-1}-I_d)(\Sigma_{\cI}-\Sigma_\nu))+{\rm tr}\left((D^{-1}-I_d)(\Sigma_\nu-\Sigma)\right).\notag\end{align}
The definitions of $\Sigma_{\cI}$ and $D$ imply that for $i\in\{1,\cdots,d\}$, when $D_{ii}\ne 1$, then $(\Sigma_{\cI})_{ii}=(\Sigma_{\nu})_{ii}$. Therefore ${\rm tr}((D^{-1}-I_d)(\Sigma_{\cI}-\Sigma_\nu))=0$. Also using $D^{-1}-I_d\in\cS^+_d$, we deduce that 
\begin{align*}&\frac{d}{dt}\BW^2(\Sigma_\mu,\Sigma_{\cI}+t\Delta)\big|_{t=0}={\rm tr}\left((\Sigma_\nu-\Sigma)^{1/2}(D^{-1}-I_d)(\Sigma_\nu-\Sigma)^{1/2}\right)\ge 0.\end{align*}With the convexity of $\cS^+_d\ni\tilde\Sigma\mapsto \BW^2(\Sigma_\mu,\tilde\Sigma)$ stated in Lemma~\ref{lemconvw2}, we conclude that $\BW^2(\Sigma_\mu,\Sigma)\ge \BW^2(\Sigma_\mu,\Sigma_{\cI})$.

To get rid of the additional assumption that $\Sigma_\mu$ and $\Sigma_\nu$ are positive definite, we now construct for $\varepsilon>0$ positive definite matrices $\Sigma_\mu^{(\varepsilon)}$ and $\Sigma_\nu^{(\varepsilon)}$ converging to $\Sigma_\mu$ and $\Sigma_{\nu}$ as $\varepsilon\to 0$ and such that $D^{(\varepsilon)}\Sigma^{(\varepsilon)}_\mu D^{(\varepsilon)}\le \Sigma^{(\varepsilon)}_\nu$ for \begin{equation}
 D^{(\varepsilon)}={\rm diag}\left(1\wedge\sqrt{\frac{(\Sigma^{(\varepsilon)}_\nu)_{11}}{(\Sigma^{(\varepsilon)}_\mu)_{11}}},\cdots,1\wedge\sqrt{\frac{(\Sigma^{(\varepsilon)}_\nu)_{dd}}{(\Sigma^{(\varepsilon)}_\mu)_{dd}}}\right).\label{defdeps}  
\end{equation}
For $\varepsilon>0$, we define the positive definite matrix $\Sigma_\mu^{(\varepsilon)}=I^{(\varepsilon)}\Sigma_\mu I^{(\varepsilon)}+\varepsilon \Delta^{(\varepsilon)}$ where $I^{(\varepsilon)},\Delta^{(\varepsilon)}\in\cS_d^+$ are the diagonal matrices defined by \begin{align}\forall i\in\{1,\cdots,d\},\;I^{(\varepsilon)}_{ii}&=1_{\{(\Sigma_\nu)_{ii}=0\}}+1_{\{(\Sigma_\nu)_{ii}>0\}}D_{ii}\left(1\vee\sqrt{\frac{(\Sigma_\mu)_{ii}+\varepsilon}{(\Sigma_\nu)_{ii}+\varepsilon}}\right)\notag\\\Delta^{(\varepsilon)}_{ii}&=1_{\{(\Sigma_\nu)_{ii}=0\}}+1_{\{(\Sigma_\nu)_{ii}>0\}}\left(1\vee{\frac{(\Sigma_\mu)_{ii}+\varepsilon}{(\Sigma_\nu)_{ii}+\varepsilon}}\right)\label{defhatd}\end{align} and the matrix $\Sigma^{(\varepsilon)}_\nu$ with diagonal entries $(\Sigma^{(\varepsilon)}_\nu)_{ii}=(\Sigma_\nu)_{ii}+\varepsilon$ and extradiagonal entries ($i\not= j$)\begin{align*}
   (\Sigma^{(\varepsilon)}_\nu)_{ij}=1_{\{(\Sigma_\nu)_{ii}(\Sigma_\nu)_{jj}>0\}}(\Sigma_\nu)_{ij}&+1_{\{(\Sigma_\nu)_{ii}=0,(\Sigma_\nu)_{jj}>0\}}(\Sigma_\mu)_{ij}D_{jj}\sqrt{\frac{\varepsilon}{(\Sigma_\mu)_{ii}+\varepsilon}}\\&+1_{\{(\Sigma_\nu)_{ii}>0,(\Sigma_\nu)_{jj}=0\}}(\Sigma_\mu)_{ij}D_{ii}\sqrt{\frac{\varepsilon}{(\Sigma_\mu)_{jj}+\varepsilon}}\\&+1_{\{(\Sigma_\nu)_{ii}=0,(\Sigma_\nu)_{jj}=0\}}(\Sigma_\mu)_{ij}\sqrt{\frac{\varepsilon}{(\Sigma_\mu)_{ii}+\varepsilon}}\sqrt{\frac{\varepsilon}{(\Sigma_\mu)_{jj}+\varepsilon}}.
\end{align*} 
Since $(\Sigma_\mu)_{ij}=0$ when $(\Sigma_\mu)_{ii}(\Sigma_\mu)_{jj}=0$, $\Sigma^{(\varepsilon)}_\nu$ converges to  $\Sigma_\nu$  as $\varepsilon\to 0$. On the other hand, $\Delta^{(\varepsilon)}$ goes to $\Delta^{(0)}$ defined by replacing $\varepsilon$ by $0$ in the right-hand side of \eqref{defhatd} and $I^{(\varepsilon)}$ goes to $I_d$ so that $\Sigma_\mu^{(\varepsilon)}$ goes to $\Sigma_\mu$.  For $i\in\{1,\cdots,d\}$, if $(\Sigma_\mu)_{ii}\le(\Sigma_\nu)_{ii}$ or $(\Sigma_\nu)_{ii}=0$, then $I^{(\varepsilon)}_{ii}=1=\Delta^{(\varepsilon)}_{ii}$ and $(\Sigma_\mu^{(\varepsilon)})_{ii}=(\Sigma_\mu)_{ii}+\varepsilon$. When $0<(\Sigma_\nu)_{ii}<(\Sigma_\mu)_{ii}$, then $
I^{(\varepsilon)}_{ii}=\sqrt{\frac{(\Sigma_\nu)_{ii}\times ((\Sigma_\mu)_{ii}+\varepsilon)}{(\Sigma_\mu)_{ii}\times ((\Sigma_\nu)_{ii}+\varepsilon)}}$ and $\Delta^{(\varepsilon)}_{ii}=\frac{(\Sigma_\mu)_{ii}+\varepsilon}{(\Sigma_\nu)_{ii}+\varepsilon}$ so that 
$$(\Sigma^{(\varepsilon)}_\mu)_{ii}=\frac{(\Sigma_\nu)_{ii}\times ((\Sigma_\mu)_{ii}+\varepsilon)}{(\Sigma_\nu)_{ii}+\varepsilon}+\varepsilon\frac{(\Sigma_\mu)_{ii}+\varepsilon}{(\Sigma_\nu)_{ii}+\varepsilon}=(\Sigma_\mu)_{ii}+\varepsilon.$$
Therefore the matrix $D^{(\varepsilon)}$ defined in \eqref{defdeps} satisfies $D^{(\varepsilon)}={\rm diag}\left(1\wedge\sqrt{\frac{(\Sigma_\nu)_{11}+\varepsilon}{(\Sigma_\mu)_{11}+\varepsilon}},\cdots,1\wedge\sqrt{\frac{(\Sigma_\nu)_{dd}+\varepsilon}{(\Sigma_\mu)_{dd}+\varepsilon}}\right)$ and, for $i\in\{1,\cdots,d\}$, we have \begin{align*}
   (D^{(\varepsilon)}I^{(\varepsilon)})_{ii}&=1_{\{(\Sigma_\nu)_{ii}=0\}}\sqrt{\frac{\varepsilon}{(\Sigma_\mu)_{ii}+\varepsilon}}+1_{\{(\Sigma_\nu)_{ii}>0\}}D_{ii} 
                                              \notag\\(D^{(\varepsilon)}\Delta^{(\varepsilon)}D^{(\varepsilon)})_{ii}&=1_{\{(\Sigma_\nu)_{ii}=0\}}\frac{\varepsilon}{(\Sigma_\mu)_{ii}+\varepsilon}+1_{\{(\Sigma_\nu)_{ii}>0\}}.
                                                                                                                                                                                                                                                                                                                                           \end{align*}
 Moreover, $D^{(\varepsilon)}\Sigma^{(\varepsilon)}_\mu D^{(\varepsilon)}=(D^{(\varepsilon)}I^{(\varepsilon)})\Sigma_\mu(D^{(\varepsilon)}I^{(\varepsilon)})+\varepsilon(D^{(\varepsilon)}\Delta^{(\varepsilon)}D^{(\varepsilon)})$. 
For $i\in\{1,\cdots,d\}$ such that $(\Sigma_{\nu})_{ii}=0$, we thus have $$(D^{(\varepsilon)}\Sigma^{(\varepsilon)}_\mu D^{(\varepsilon)})_{ii}=(\Sigma^{(\varepsilon)}_\mu)_{ii}\frac{\varepsilon}{(\Sigma_\mu)_{ii}+\varepsilon} 
=\varepsilon=(\Sigma_\nu)_{ii}+\varepsilon=(\Sigma^{(\varepsilon)}_\nu)_{ii}.$$
For $i,j\in\{1,\cdots,d\}$ such that $i\ne j$ and $(\Sigma_{\nu})_{ii}(\Sigma_{\nu})_{jj}=0$, we have $(D^{(\varepsilon)}\Sigma^{(\varepsilon)}_\mu D^{(\varepsilon)})_{ij}=(\Sigma^{(\varepsilon)}_\nu)_{ij}$.
For $i,j\in\{1,\cdots,d\}$ such that $(\Sigma_{\nu})_{ii}(\Sigma_{\nu})_{jj}>0$, we have $(D^{(\varepsilon)}\Sigma^{(\varepsilon)}_\mu D^{(\varepsilon)})_{ij}=(D\Sigma_\mu D)_{ij}+\varepsilon 1_{\{i=j\}}$ while $(\Sigma^{(\varepsilon)}_\nu)_{ij}=(\Sigma_\nu)_{ij}+\varepsilon 1_{\{i=j\}}$. Therefore the inequality $D\Sigma_\mu D+\varepsilon I_d\le\Sigma_\nu+\varepsilon I_d$ used on the block corresponding to the indices $i\in\{1,\cdots,d\}$ such that $(\Sigma_\nu)_{ii}>0$ implies that $D^{(\varepsilon)}\Sigma^{(\varepsilon)}_\mu D^{(\varepsilon)}\le \Sigma^{(\varepsilon)}_\nu$, which in turn yields that $\Sigma^{(\varepsilon)}_\nu$ is positive definite.

Since $D^{(\varepsilon)}={\rm diag}\left(1\wedge\sqrt{\frac{(\Sigma_\nu)_{11}+\varepsilon}{(\Sigma_\mu)_{11}+\varepsilon}},\cdots,1\wedge\sqrt{\frac{(\Sigma_\nu)_{dd}+\varepsilon}{(\Sigma_\mu)_{dd}+\varepsilon}}\right)$, $D^{(\varepsilon)}$ converges to $D$ as $\varepsilon \to 0$. By taking the limit $\varepsilon \to 0$ in the equalities $$\cI_2(\Sigma^{(\varepsilon)}_\mu,\Sigma^{(\varepsilon)}_\nu)=D^{(\varepsilon)}\Sigma^{(\varepsilon)}_\mu D^{(\varepsilon)},\;\BW^2(\Sigma^{(\varepsilon)}_\mu,\cI_2(\Sigma^{(\varepsilon)}_\mu,\Sigma^{(\varepsilon)}_\nu))=\sum_{i=1}^d\left(\sqrt{(\Sigma^{(\varepsilon)}_\mu)_{ii}}-\sqrt{(\Sigma^{(\varepsilon)}_\nu)_{ii}}\right)_+^2, $$ and using the continuity of $\cI_2$ stated in Propositions \ref{lipmu} and \ref{holdnu}, we obtain $$\cI_2(\Sigma_\mu,\Sigma_\nu)=D\Sigma_\mu D\mbox{ and }\BW^2(\Sigma_\mu,\cI_2(\Sigma_\mu,\Sigma_\nu))=\sum_{i=1}^d\left(\sqrt{(\Sigma_\mu)_{ii}}-\sqrt{(\Sigma_\nu)_{ii}}\right)_+^2. $$
 \end{proof}

\begin{proposition}\label{prop_existsO}
  Let $\Sigma_\mu,\Sigma_\nu\in\cS_d^+$. Then, there exists $O\in\cO_d$ such that $DO^*\Sigma_\mu OD\le O^*\Sigma_\nu O$ where $D={\rm diag}\left(1\wedge\sqrt{\frac{(O^*\Sigma_\nu O)_{11}}{(O^*\Sigma_\mu O)_{11}}},\cdots,1\wedge\sqrt{\frac{(O^*\Sigma_\nu O)_{dd}}{(O^*\Sigma_\mu O)_{dd}}}\right)$.
\end{proposition}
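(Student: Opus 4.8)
The plan is to obtain $O$ as a matrix whose columns form an eigenbasis of
$$\textstyle S:=\Sigma_\nu^{-1/2}\big(\Sigma_\nu^{1/2}\cJ_2(\Sigma_\nu,\Sigma_\mu)\Sigma_\nu^{1/2}\big)^{1/2}\Sigma_\nu^{-1/2}$$
(which, in the regular case, is the optimal transport map from $\cN_d(0,\Sigma_\nu)$ to $\cN_d(0,\cJ_2(\Sigma_\nu,\Sigma_\mu))$), rather than to work with $\cI_2$. This is the right choice because $\cJ_2(\Sigma_\nu,\Sigma_\mu)\ge\Sigma_\mu$, so in the regular case the matrices involved are automatically positive definite, Lemma~\ref{lem_trace} applies at once, and no delicate positivity argument is needed.

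\textbf{Reduction to the regular case.} I would first treat $\Sigma_\mu,\Sigma_\nu\in\cS^{+,*}_d$. The general case then follows by approximation: put $\Sigma_\mu^{(n)}=\Sigma_\mu+\frac1n I_d$ and $\Sigma_\nu^{(n)}=\Sigma_\nu+\frac1n I_d$, let $O^{(n)}$ and $D^{(n)}$ be given by the regular case, extract by compactness of $\cO_d$ a subsequence with $O^{(n_k)}\to O$, and pass to the limit in $D^{(n_k)}(O^{(n_k)*}\Sigma_\mu^{(n_k)}O^{(n_k)})D^{(n_k)}\le O^{(n_k)*}\Sigma_\nu^{(n_k)}O^{(n_k)}$. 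The only point to check is the behaviour of $D^{(n_k)}_{ii}$ when $a_i:=(O^*\Sigma_\mu O)_{ii}\to 0$: if $b_i:=(O^*\Sigma_\nu O)_{ii}>0$ then $D^{(n_k)}_{ii}\to 1$, which matches the convention in the statement, while if $b_i=0$ as well then the $i$-th row and column of $O^*\Sigma_\mu O$ and of $O^*\Sigma_\nu O$ both vanish, so the $i$-th row and column of the limiting inequality read $0\le 0$ whatever $D_{ii}$ is and one may simply set $D_{ii}=1$. When $a_i>0$, $D^{(n_k)}_{ii}$ converges to $1\wedge\sqrt{b_i/a_i}$, again the value prescribed in the statement.

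\textbf{The regular case.} Assume $\Sigma_\mu,\Sigma_\nu\in\cS^{+,*}_d$ and write $\Sigma_\cJ:=\cJ_2(\Sigma_\nu,\Sigma_\mu)$, so $\Sigma_\cJ\ge\Sigma_\mu$ is positive definite and, by Proposition~\ref{prop_projG}, $\Sigma_\cJ$ minimises $\Sigma\mapsto\BW^2(\Sigma_\nu,\Sigma)$ over $\{\Sigma\in\cS^+_d:\Sigma\ge\Sigma_\mu\}$ (the objective there differs from $\tr(\Sigma-2(\Sigma_\nu^{1/2}\Sigma\Sigma_\nu^{1/2})^{1/2})$ by the constant $\tr\Sigma_\nu$). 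One checks directly that $S$ is positive definite and $S\Sigma_\nu S=\Sigma_\cJ$, and Lemma~\ref{lem_trace} applied with $(\Sigma,\Sigma_1)=(\Sigma_\nu,\Sigma_\cJ)$ gives that the derivative of $g(\Sigma):=\BW^2(\Sigma_\nu,\Sigma)$ at $\Sigma_\cJ$ in direction $\Delta$ equals $\tr((I_d-S^{-1})\Delta)$. Since $\{\Sigma\ge\Sigma_\mu\}$ is convex and $g$ is differentiable near $\Sigma_\cJ$, minimality yields $\tr((I_d-S^{-1})(\Sigma'-\Sigma_\cJ))\ge 0$ for every $\Sigma'\ge\Sigma_\mu$. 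Taking $\Sigma'=\Sigma_\cJ+vv^*$ with $v\in\R^d$ arbitrary gives $I_d-S^{-1}\ge 0$ (hence $S\ge I_d$); taking $\Sigma'=\Sigma_\mu$ and $\Sigma'=2\Sigma_\cJ-\Sigma_\mu$, both $\ge\Sigma_\mu$, gives $\tr((I_d-S^{-1})(\Sigma_\cJ-\Sigma_\mu))=0$. As $I_d-S^{-1}$ and $\Sigma_\cJ-\Sigma_\mu$ are positive semi-definite with vanishing trace pairing, their product is zero, so ${\rm range}(I_d-S^{-1})\subseteq\ker(\Sigma_\cJ-\Sigma_\mu)$.

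\textbf{Conclusion in the regular case, and the main difficulty.} Diagonalise $S=O\hat E O^*$ with $O\in\cO_d$ and $\hat E={\rm diag}(\hat E_{11},\dots,\hat E_{dd})$, $\hat E_{ii}\ge 1$, and set $D=\hat E^{-1}$, so $S^{-1}=ODO^*$ and $0<D_{ii}\le 1$. From $S^{-1}\Sigma_\cJ S^{-1}=\Sigma_\nu$ we get $D(O^*\Sigma_\cJ O)D=O^*\Sigma_\nu O$, hence $D(O^*\Sigma_\mu O)D\le D(O^*\Sigma_\cJ O)D=O^*\Sigma_\nu O$ because $\Sigma_\mu\le\Sigma_\cJ$; this is the required inequality. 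For the form of $D$: for each $i$ one has $(O^*\Sigma_\mu O)_{ii}>0$ and $(O^*\Sigma_\nu O)_{ii}=D_{ii}^2(O^*\Sigma_\cJ O)_{ii}\ge D_{ii}^2(O^*\Sigma_\mu O)_{ii}$, so $\frac{(O^*\Sigma_\nu O)_{ii}}{(O^*\Sigma_\mu O)_{ii}}\ge D_{ii}^2$; if $D_{ii}=1$ this forces $1\wedge\sqrt{\frac{(O^*\Sigma_\nu O)_{ii}}{(O^*\Sigma_\mu O)_{ii}}}=1=D_{ii}$, and if $D_{ii}<1$ then $Oe_i$ is an eigenvector of $I_d-S^{-1}$ for the nonzero eigenvalue $1-D_{ii}$, hence $Oe_i\in{\rm range}(I_d-S^{-1})\subseteq\ker(\Sigma_\cJ-\Sigma_\mu)$, which gives $(O^*\Sigma_\cJ O)_{ii}=(O^*\Sigma_\mu O)_{ii}$ and therefore $D_{ii}^2=\frac{(O^*\Sigma_\nu O)_{ii}}{(O^*\Sigma_\mu O)_{ii}}<1$, i.e. $D_{ii}=1\wedge\sqrt{\frac{(O^*\Sigma_\nu O)_{ii}}{(O^*\Sigma_\mu O)_{ii}}}$. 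Together with the reduction above this proves the proposition. The main obstacle is exactly this last identification: one must exploit the complementary-slackness identity coming from the minimality of $\Sigma_\cJ$ to show that the coordinates in which the transport map $S$ strictly expands are precisely the ones in which the constraint $\Sigma\ge\Sigma_\mu$ is saturated. Deciding to project onto $\{\Sigma\ge\Sigma_\mu\}$ (rather than onto $\{\Sigma\le\Sigma_\nu\}$, where the extra constraint $\Sigma\ge 0$ could bite and the relevant matrix could be singular) is what makes everything else routine.
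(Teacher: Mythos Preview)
Your proof is correct and follows essentially the same route as the paper: both work with $\Sigma_\cJ=\cJ_2(\Sigma_\nu,\Sigma_\mu)$, use Lemma~\ref{lem_trace} to obtain the first-order optimality condition $\tr((I_d-S^{-1})(\Sigma'-\Sigma_\cJ))\ge 0$, derive the complementary slackness $\tr((I_d-S^{-1})(\Sigma_\cJ-\Sigma_\mu))=0$, and pass to the general case by approximation and compactness. The only presentational difference is that the paper obtains $O$ via Theorem~\ref{thm_BW} (so that $O^*\Sigma_\nu O$ and $O^*\Sigma_\cJ O$ share a correlation matrix) and then invokes Lemma~\ref{lem_calcD} to see that this $O$ diagonalises $S$, whereas you diagonalise $S$ directly; likewise, the paper reads off the slackness coordinatewise from $\sum_i(1-D_{ii})((O^*\Sigma_\cJ O)_{ii}-(O^*\Sigma_\mu O)_{ii})=0$, while you use the cleaner matrix statement $(I_d-S^{-1})(\Sigma_\cJ-\Sigma_\mu)=0$.
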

\begin{proof}
  Let us first assume that $\Sigma_\mu,\Sigma_\nu\in\cS^{+,*}_d$. This implies $\Sigma_{\cJ}:=\cJ_2(\Sigma_\nu,\Sigma_\mu) \in \cS_d^{+,*}$ since $\Sigma_\mu\le \Sigma_{\cJ}$ . By Theorem~\ref{thm_BW}, there exist $O\in \cO_d$ and  $C\in \mathfrak{C}_d$ such that 
  $$O^*\Sigma_\nu O= \sqrt{\textup{dg}(O^*\Sigma_\nu O)}C \sqrt{\textup{dg}(O^*\Sigma_\nu O)},\  O^*\Sigma_\cJ O=\sqrt{\textup{dg}(O^*\Sigma_\cJ O)}C \sqrt{\textup{dg}(O^*\Sigma_\cJ O)}.$$
  This gives $O^*\Sigma_\nu O=DO^*\Sigma_\cJ OD$ with $D={\rm diag}\left(\sqrt{\frac{(O^*\Sigma_\nu O)_{11}}{(O^*\Sigma_\cJ O)_{11}}},\cdots,\sqrt{\frac{(O^*\Sigma_\nu O)_{dd}}{(O^*\Sigma_\cJ O)_{dd}}}\right)$. 
  We note that $D^{-1}=\textup{dg}(O^*\Sigma_\cJ O)^{1/2}\textup{dg}(O^*\Sigma_\nu O)^{-1/2}$ and thus 
  \begin{align}\label{eq_D}
    D^{-1}&=O^*\Sigma_\nu^{-1/2}(\Sigma_\nu^{1/2}\Sigma_\cJ \Sigma_\nu^{1/2})^{1/2}\Sigma_\nu^{-1/2}O,
  \end{align}
by Lemma~\ref{lem_calcD}.

Let $\Sigma \ge \Sigma_\mu$ and $\Sigma(\lambda)=(1-\lambda)\Sigma_\cJ+\lambda \Sigma$. We have by Lemma~\ref{lem_trace}
$$\frac{d}{d\lambda} \BW^2(\Sigma_\nu,\Sigma(\lambda))=\tr\left(\left(I_d-\Sigma_\nu^{1/2}(\Sigma_\nu^{1/2}\Sigma(\lambda)\Sigma_\nu^{1/2})^{-1/2}\Sigma_\nu^{1/2}\right)(\Sigma-\Sigma_\cJ)\right),$$
for all $\lambda \in [0,1]$, since $\Sigma(\lambda)\in \cS_d^{+,*}$. 
 Then using the optimality of $\Sigma_\cJ$, we get by taking this derivative at $\lambda=0$ and using~\eqref{eq_D}:
$$\tr((I_d-ODO^*)(\Sigma-\Sigma_\cJ))=\tr((I_d-D)(O^*\Sigma O-O^* \Sigma_\cJ O ))\ge 0.$$
By taking $\Sigma=\Sigma_\cJ+ O\Delta O^*$ with any diagonal matrix $\Delta \in \cS_d^+$, we get $\tr((I_d-D)\Delta)\ge 0$ and thus $D_{ii}\in [0,1]$ for all $i$. 

Besides, for $\Sigma=\Sigma_\mu$, we can take $\lambda\le 0$ since then $\Sigma(\lambda)=\Sigma_\cJ+\lambda(\Sigma_\mu-\Sigma_\cJ)\ge \Sigma_\cJ\ge \Sigma_\mu$, which gives $\tr((I_d-D)(O^*\Sigma_\mu O-O^* \Sigma_\cJ O))=0$. Therefore, $D_{ii}=1$ if $(O^*\Sigma_\cJ O)_{ii}>(O^*\Sigma_\mu O)_{ii}$, which gives that $(O^*\Sigma_\cJ O)_{ii}=(O^*\Sigma_\nu O)_{ii}$. Otherwise we have $(O^*\Sigma_\cJ O)_{ii}=(O^*\Sigma_\mu O)_{ii}\ge(O^*\Sigma_\nu O)_{ii}$ since $D_{ii}\le 1$. This gives $(O^*\Sigma_\cJ O)_{ii}=(O^*\Sigma_\nu O)_{ii}\vee (O^*\Sigma_\mu O)_{ii}$ for $i\in\{1,\cdots,d\}$ and thus
$$D={\rm diag}\left(1\wedge\sqrt{\frac{(O^*\Sigma_\nu O)_{11}}{(O^*\Sigma_\mu O)_{11}}},\cdots,1\wedge\sqrt{\frac{(O^*\Sigma_\nu O)_{dd}}{(O^*\Sigma_\mu O)_{dd}}}\right).$$
We conclude by observing that $O^*\Sigma_\nu O=DO^*\Sigma_\cJ OD\ge DO^*\Sigma_\mu OD$.

Last, we now consider the case $\Sigma_\mu,\Sigma_\nu \in \cS_d^+$ and define, for $n\ge 1$, $\Sigma_\mu^n = \Sigma_\mu + \frac 1n I_d$, $\Sigma_\nu^n = \Sigma_\nu + \frac 1n I_d$. We note $O_n \in \mathcal{O}_d$ and $C_n\in \mathfrak{C}_d$ such that $O^*_n\Sigma_\nu^n O $ and $O^*_n\mathcal{J}_2(\Sigma_\nu^n, \Sigma_\mu^n) O_n$ share the same correlation matrix $C_n$.  We have $D_n O^*_n \Sigma_\mu^n O_n D_n\le   O^*_n \Sigma_\nu^n O_n $ with 
$$D_n={\rm diag}\left(1\wedge\sqrt{\frac{(O^*_n\Sigma^n_\nu O_n)_{11}}{(O_n^*\Sigma^n_\mu O_n)_{11}}},\cdots,1\wedge\sqrt{\frac{(O_n^*\Sigma^n_\nu O_n)_{dd}}{(O_n^*\Sigma^n_\mu O_n)_{dd}}}\right).$$
By compacity of ${\mathcal O}_d\times {\mathfrak C}_d$ and the set of diagonal matrices with coefficients in $[0,1]$, we can extract a subsequence of $(O_n,C_n,D_n)$ that converges to $(O,C,\tilde{D})$ and we have $\tilde{D}O^*\Sigma_\mu O \tilde{D} \le O^* \Sigma_\nu O$. For the matrix $D$ in the statement, we clearly have $\tilde{D}_{ii}=D_{ii}$ if $(O^*\Sigma_\mu O)_{ii}>0$. We also note that $\tilde{D}O^*\Sigma_\mu O \tilde{D}$ does not depend on the value of $\tilde{D}_{ii}$ if $(O^*\Sigma_\mu O)_{ii}=0$ which implies $(O^*\Sigma_\mu O)_{ij}=(O^*\Sigma_\mu O)_{ji}=0$ for all $j\in\{1,\cdots,d\}$, so that we also have $DO^*\Sigma_\mu O D \le O^* \Sigma_\nu O$.
\end{proof}

 \begin{proof}[Proof of Theorem \ref{thmprojgauss}]
From Proposition~\ref{prop_existsO}, there exists 
$O\in\cO_d$ such that $DO^*\Sigma_\mu OD\le O^*\Sigma_\nu O$ where $D={\rm diag}\left(1\wedge\sqrt{\frac{(O^*\Sigma_\nu O)_{11}}{(O^*\Sigma_\mu O)_{11}}},\cdots,1\wedge\sqrt{\frac{(O^*\Sigma_\nu O)_{dd}}{(O^*\Sigma_\mu O)_{dd}}}\right)$. Then, by Proposition~\ref{prop_corot}, we get $\cI_2(\Sigma_\mu,\Sigma_\nu)=ODO^*\Sigma_\mu ODO^*$ and the formula for $\BW^2(\Sigma_\mu,\cI_2(\Sigma_\mu,\Sigma_\nu))$ holds by Theorem~\ref{thm_BW}.
Let now $\tilde\Sigma_{\cJ}$ be defined by \eqref{deftsigj}. By left and right-multiplications by $\tilde D={\rm diag}\left(\frac{1_{\{(O^*\Sigma_\nu O)_{11}>0\}}}{D_{11}},\cdots,\frac{1_{\{(O^*\Sigma_\nu O)_{dd}>0\}}}{D_{dd}}\right)$, the inequality $DO^*\Sigma_\mu OD\le O^*\Sigma_\nu O$ implies that \begin{align}
   \left(1_{\{(O^*\Sigma_\nu O)_{ii}(O^*\Sigma_\nu O)_{jj}>0\}}(O^*\Sigma_\mu O)_{ij}\right)_{1\le i,j\le d}&\le \tilde D O^*\Sigma_\nu O \tilde D=\left(1_{\{(O^*\Sigma_\nu O)_{ii}(O^*\Sigma_\nu O)_{jj}>0\}}(\tilde\Sigma_\cJ)_{ij}\right)_{1\le i,j\le d}.\label{egalnonnul}
\end{align}
 Since for $1\le i,j\le d$,  $(\tilde\Sigma_{\cJ})_{ij}=(O^*\Sigma_\mu O)_{ij}$ when  $(O^*\Sigma_\nu O)_{ii}(O^*\Sigma_\nu O)_{jj}=0$, we deduce that $O^*\Sigma_\mu O\le\tilde\Sigma_{\cJ}$ and therefore $\Sigma_\mu\le O\tilde\Sigma_{\cJ}O^*$. In view of \eqref{fb}, to deduce that $\cJ_2(\Sigma_\nu,\Sigma_\mu)=O\tilde\Sigma_\cJ O^*$, it is enough to check that \begin{equation}
   \BW^2(\Sigma_\nu,O\tilde\Sigma_\cJ O^*)=\sum_{i=1}^d\left(\sqrt{(O^*\Sigma_\mu O)_{ii}}-\sqrt{(O^*\Sigma_\nu O)_{ii}}\right)_+^2.\label{w22nj}
 \end{equation} 
 In fact, let $C \in \mathfrak{C}_d$ be a correlation matrix of $\tilde{\Sigma}_\cJ$, i.e. $\tilde{\Sigma}_\cJ=\textup{dg}(\tilde{\Sigma}_\cJ)^{1/2} C \textup{dg}(\tilde{\Sigma}_\cJ)^{1/2}$. Then, $C$ is also a correlation matrix of $O^*\Sigma_\nu O=D \tilde{\Sigma}_\cJ D$, and we get~\eqref{w22nj} by Theorem~\ref{thm_BW}. 
\end{proof}

\begin{remark}
  We note that for the matrix $O\in \mathcal{O}_d$ given by Proposition~\ref{prop_existsO}, there exists $C,C'\in \mathfrak{C}_d$ such that $O^*\Sigma_\nu O$ and $O^* \cJ_2(\Sigma_\nu,\Sigma_\mu)O=\tilde{\Sigma}_\cJ$ share the correlation matrix $C$, and $O^*\Sigma_\mu O$ and $O^* \cI_2(\Sigma_\mu,\Sigma_\nu)O$ share the correlation matrix $C'$.
\end{remark}

\subsection{Further results on the projections}

\begin{corollary}\label{cormgn} Let $\Sigma_\mu,\Sigma_\nu\in\cS^+_d$. Then,
 \begin{align*}
   \cI_2(\Sigma_\mu,\Sigma_\nu)=\Sigma_\nu&\iff\cJ_2(\Sigma_\nu,\Sigma_\mu)=\Sigma_\mu \\ &\iff \exists C\in \mathfrak{C}_d,O\in \mathcal{O}_d
   \text{ such that }   O^*\Sigma_\mu O \text { and }O^* \Sigma_\nu O \text{ share the}\\& \text{ correlation matrix }C \text{ and } \ \textup{dg}(O^*\Sigma_\nu O)\le\textup{dg}(O^*\Sigma_\mu O) 
 \\
&\iff \Sigma_\nu\le (\Sigma_\nu^{1/2}\Sigma_\mu\Sigma_\nu^{1/2})^{1/2}\\
     &\;\;\Longrightarrow (\Sigma^{1/2}_\mu\Sigma_\nu\Sigma_\mu^{1/2})^{1/2}\le\Sigma_\mu,
 \end{align*}
where the last implication is an equivalence when $\Sigma_\mu$ is positive definite but not in general.
 \end{corollary}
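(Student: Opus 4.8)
The plan is to prove the displayed chain by a cycle of implications and then treat the trailing implication separately; write (i), (ii), (iii), (iv) for the four conditions in the order displayed. \emph{Step 1: (i)$\Leftrightarrow$(ii).} Since $\cN_d(0,\Sigma_\nu)$ lies in the feasible set defining $\cI_2(\Sigma_\mu,\Sigma_\nu)$ and $\cN_d(0,\Sigma_\mu)$ in the one defining $\cJ_2(\Sigma_\nu,\Sigma_\mu)$, uniqueness of $\cI_2$ (Proposition~\ref{prop_projG}) makes (i) equivalent to $\BW(\Sigma_\mu,\cI_2(\Sigma_\mu,\Sigma_\nu))=\BW(\Sigma_\mu,\Sigma_\nu)$, while uniqueness of the Gaussian $\cJ_2$ (Propositions~\ref{prop_projG} and~\ref{propj2}) makes (ii) equivalent to $\BW(\Sigma_\nu,\cJ_2(\Sigma_\nu,\Sigma_\mu))=\BW(\Sigma_\nu,\Sigma_\mu)$; these two equalities coincide by~\eqref{fb} and the symmetry of $\BW$.

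\emph{Step 2: (i)$\Leftrightarrow$(iii).} For the $O$ furnished by Proposition~\ref{prop_existsO}, Theorem~\ref{thmprojgauss} gives $(O^*\cI_2(\Sigma_\mu,\Sigma_\nu)O)_{ii}=(O^*\Sigma_\mu O)_{ii}\wedge(O^*\Sigma_\nu O)_{ii}$; since $\cN_d(0,\cI_2(\Sigma_\mu,\Sigma_\nu))\le_{cx}\cN_d(0,\Sigma_\nu)$ forces $\cI_2(\Sigma_\mu,\Sigma_\nu)\le\Sigma_\nu$, condition (i) holds iff the traces agree, i.e.\ iff $\textup{dg}(O^*\Sigma_\nu O)\le\textup{dg}(O^*\Sigma_\mu O)$, and then the Remark after Theorem~\ref{thmprojgauss} supplies the common correlation matrix, giving (iii). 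Conversely, given $(O,C)$ as in (iii), Lemma~\ref{lemrot} reduces the claim to $\cI_2(A,B)=B$ with $A:=O^*\Sigma_\mu O$, $B:=O^*\Sigma_\nu O$; the identity matrix satisfies the hypothesis of Theorem~\ref{thmprojgauss} for the pair $(A,B)$, because on the support of $\textup{dg}(A)$ one has $D_{ii}=\sqrt{B_{ii}/A_{ii}}$ and the identities $A_{ij}=C_{ij}\sqrt{A_{ii}A_{jj}}$, $B_{ij}=C_{ij}\sqrt{B_{ii}B_{jj}}$ give $DAD=B\le B$ (the entries outside that support vanishing on both sides), so $\cI_2(A,B)=DAD=B$.

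\emph{Step 3: (iii)$\Leftrightarrow$(iv), the implication $(\Sigma_\mu^{1/2}\Sigma_\nu\Sigma_\mu^{1/2})^{1/2}\le\Sigma_\mu$, and its converse.} Applying Lemma~\ref{lem_calcD} with $(\Sigma_1,\Sigma_2)=(\Sigma_\nu,\Sigma_\mu)$ and the matrices of (iii) yields $(\Sigma_\nu^{1/2}\Sigma_\mu\Sigma_\nu^{1/2})^{1/2}-\Sigma_\nu=OB^{1/2}(D-I_d)B^{1/2}O^*$ with $D_{ii}=\sqrt{A_{ii}/B_{ii}}\ge 1$ (and $=1$ on $\ker\textup{dg}(B)$), hence $\ge0$, which is (iv); the same lemma with $(\Sigma_1,\Sigma_2)=(\Sigma_\mu,\Sigma_\nu)$ gives $\Sigma_\mu-(\Sigma_\mu^{1/2}\Sigma_\nu\Sigma_\mu^{1/2})^{1/2}=OA^{1/2}(I_d-D')A^{1/2}O^*\ge0$. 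For (iv)$\Rightarrow$(iii): if $\Sigma_\nu\in\cS^{+,*}_d$, pick $O$ as in~\eqref{O_share_correl} for $(\Sigma_\nu,\Sigma_\mu)$, so that $O^*\Sigma_\mu O$ and $O^*\Sigma_\nu O$ share a correlation matrix and, by the second part of Lemma~\ref{lem_calcD}, $\textup{dg}(O^*\Sigma_\mu O)^{1/2}\textup{dg}(O^*\Sigma_\nu O)^{-1/2}=O^*\Sigma_\nu^{-1/2}(\Sigma_\nu^{1/2}\Sigma_\mu\Sigma_\nu^{1/2})^{1/2}\Sigma_\nu^{-1/2}O$, which (iv) forces to be $\ge I_d$, i.e.\ $\textup{dg}(O^*\Sigma_\nu O)\le\textup{dg}(O^*\Sigma_\mu O)$, i.e.\ (iii). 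If $\Sigma_\nu$ is singular of rank $d'\in\{1,\dots,d-1\}$, Lemma~\ref{lemrot} lets us assume $\Sigma_\nu=\mathrm{diag}(\Lambda,0)$ with $\Lambda=\mathrm{diag}(\lambda_1,\dots,\lambda_{d'})\in\cS^{+,*}_{d'}$; then, with $P=(\Sigma_\mu)_{1:d',1:d'}$, one has $(\Sigma_\nu^{1/2}\Sigma_\mu\Sigma_\nu^{1/2})^{1/2}=\mathrm{diag}((\Lambda^{1/2}P\Lambda^{1/2})^{1/2},0)$, so (iv) reads $\Lambda\le(\Lambda^{1/2}P\Lambda^{1/2})^{1/2}$, which by the positive-definite case just proven and the equivalences (i)$\Leftrightarrow$(ii)$\Leftrightarrow$(iii) in dimension $d'$ is equivalent to $\cJ_2(\cN_{d'}(0,\Lambda),\cN_{d'}(0,P))=\cN_{d'}(0,P)$, i.e.\ to $\Gamma=P$, i.e.\ to $\Sigma_\star=\Sigma_\mu$ in the notation of Proposition~\ref{propj2}; hence $\cJ_2(\Sigma_\nu,\Sigma_\mu)=\Sigma_\star=\Sigma_\mu$, which is (ii). (The case $\Sigma_\nu=0$ is immediate.) Finally, when $\Sigma_\mu\in\cS^{+,*}_d$ the same computation with the roles of $\mu$ and $\nu$ exchanged turns $(\Sigma_\mu^{1/2}\Sigma_\nu\Sigma_\mu^{1/2})^{1/2}\le\Sigma_\mu$ into (iii), whereas taking $\Sigma_\mu=0\ne\Sigma_\nu$ (for which this inequality holds but (iv) fails) shows the last implication is not reversible in general.

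\emph{Main obstacle.} The delicate point is (iv)$\Rightarrow$(iii) when $\Sigma_\nu$ is singular: in the non-degenerate case one reads off $\textup{dg}(O^*\Sigma_\nu O)\le\textup{dg}(O^*\Sigma_\mu O)$ from the positivity of a single conjugated diagonal matrix, but when $\Sigma_\nu$ is singular the analogous relation $B^{1/2}(D-I_d)B^{1/2}\ge0$ no longer forces $D_{ii}\ge1$ on the diagonal support of $B$, so one must reduce to the block form $\mathrm{diag}(\Lambda,0)$ and invoke the explicit description of the $\cJ_2$-projection in Proposition~\ref{propj2}. A minor but pervasive source of care throughout is the bookkeeping of zero diagonal entries and the convention $1\wedge\sqrt{\,\cdot\,/0}=1$.
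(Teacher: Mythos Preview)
Your proof is correct and follows essentially the same architecture as the paper's: the equivalence (i)$\Leftrightarrow$(ii) via \eqref{fb} and uniqueness, the bridge to (iii) via Theorem~\ref{thmprojgauss}, and the link (iii)$\Leftrightarrow$(iv) via Lemma~\ref{lem_calcD}. Two points differ in execution. First, you prove (i)$\Leftrightarrow$(iii) through a trace argument (ordered matrices with equal trace coincide) combined with the Remark after Theorem~\ref{thmprojgauss}, whereas the paper works on the $\cJ$-side and reads off $\tilde\Sigma_\cJ=O^*\Sigma_\mu O$ directly from \eqref{deftsigj}; both are equally short. Second, and more substantively, for (iv)$\Rightarrow$(iii) when $\Sigma_\nu$ is singular you reduce to the $d'$-dimensional positive-definite block and invoke Proposition~\ref{propj2} to identify $\Sigma_\star=\Sigma_\mu$, thereby landing on (ii); the paper instead constructs the orthogonal matrix $O$ in one shot by composing a diagonalization of $\Sigma_\nu$ with a $d'$-dimensional rotation so that $\#\{i:(O^*\Sigma_\nu O)_{ii}>0\}=\mathrm{rk}(\Sigma_\nu)$, after which Lemma~\ref{lem_calcD} applies directly to produce the diagonal inequality. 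Your route is a clean recursion on what has already been established in the positive-definite case and on Proposition~\ref{propj2}; the paper's route avoids that proposition but needs the extra rank bookkeeping. Both yield the same conclusion with comparable effort.
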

 \begin{remark}\label{remequivinegsqrt}\begin{itemize}
   \item When $\Sigma_\nu\le\Sigma_\mu$, then $\Sigma_\nu^2\le\Sigma_\nu^{1/2}\Sigma_\mu\Sigma_\nu^{1/2}$ and, by Lemma \ref{ordsq}, $\Sigma_\nu\le(\Sigma_\nu^{1/2}\Sigma_\mu\Sigma_\nu^{1/2})^{1/2}$. With Corollary \ref{cormgn}, we deduce that $\Sigma_\nu\le\Sigma_\mu$ implies that $\cI_2(\Sigma_\mu,\Sigma_\nu)=\Sigma_\nu$ and $\cJ_2(\Sigma_\nu,\Sigma_\mu)=\Sigma_\mu$. This is particular to the Gaussian case.
     Although perhaps counterintuitive, the inequality $\nu\le_{cx}\mu$ is in general not enough to ensure that $\cI_2(\mu,\nu)=\nu$ and $\cJ_2(\nu,\mu)=\mu$. Indeed, when $d=1$, by \eqref{eq:quantileProjections}, $\cI_2(\mu,\nu)=\nu$ and $\cJ_2(\nu,\mu)=\mu$ if and only if the the function $G(u)=\int_0^u(F_\mu^{-1}-F_\nu^{-1})(v)dv$ is convex on $[0,1]$ i.e. $F_\mu^{-1}-F_\nu^{-1}$ is non-decreasing on $(0,1)$, while $\nu\le_{cx}\mu$ is equivalent to the non-positivity of $G$ together with $G(1)=0$.
     \item Let us note that the lack of equivalence between $ \Sigma_\nu\le (\Sigma_\nu^{1/2}\Sigma_\mu\Sigma_\nu^{1/2})^{1/2}$ and $(\Sigma^{1/2}_\mu\Sigma_\nu\Sigma_\mu^{1/2})^{1/2} \le \Sigma_\mu$ already holds in dimension $d=1$. Indeed, for $\Sigma_\mu,\Sigma_\nu \in \R_+$, we have
$$ \Sigma_\nu \le \sqrt{\Sigma_\nu \Sigma_\mu } \iff \Sigma_\nu \le\Sigma_\mu  \implies  \sqrt{\Sigma_\nu  \Sigma_\mu } \le \Sigma_\mu,$$
where the last implication is an equivalence for $\Sigma_\mu>0$ but not when $0=\Sigma_\mu<\Sigma_\nu$. 
 \end{itemize}
   \end{remark}
\begin{proof}[Proof of Corollary \ref{cormgn}] 
  First, the equivalence
  $$\cI_2(\Sigma_\mu,\Sigma_\nu)=\Sigma_\nu \iff \cJ_2(\Sigma_\nu,\Sigma_\mu)=\Sigma_\mu$$ is a consequence of~\eqref{fb} and the  uniqueness of the projections given by Proposition~\ref{propj2}.
  
  If $\cJ_2(\Sigma_\nu,\Sigma_\mu)=\Sigma_\mu$, then Theorem~\ref{thmprojgauss} gives the existence of $O\in \mathcal{O}_d$ such that the matrix $\tilde\Sigma_\cJ$ defined by~\eqref{deftsigj} is equal to $O^*\Sigma_\mu O$. Besides, we have $DO^*\Sigma_\mu OD=O^*\Sigma_\nu O$, which gives that $O^*\Sigma_\mu O$ and  $O^*\Sigma_\nu O$ share the same correlation matrix and $\textup{dg}(O^*\Sigma_\nu O)\le\textup{dg}(O^*\Sigma_\mu O)$, because the entries of~$D$ are in $[0,1]$. Conversely, if there exists $C\in \mathfrak{C}_d$ and $O\in \mathcal{O}_d$  such that $O^*\Sigma_\mu O$ and  $O^*\Sigma_\nu O$ share the correlation matrix~$C$ and $\textup{dg}(O^*\Sigma_\nu O)\le\textup{dg}(O^*\Sigma_\mu O)$, then we have $DO^*\Sigma_\mu O D=O^*\Sigma_\nu O$ with $$D={\rm diag}\left(1\wedge\sqrt{\frac{(O^*\Sigma_\nu O)_{11}}{(O^*\Sigma_\mu O)_{11}}},\cdots,1\wedge\sqrt{\frac{(O^*\Sigma_\nu O)_{dd}}{(O^*\Sigma_\mu O)_{dd}}}\right),$$
  and we get $\cJ_2(\Sigma_\nu,\Sigma_\mu)=\Sigma_\mu$ by Theorem~\ref{thmprojgauss}.

  Suppose now that there exists $C\in\mathfrak{C}_d$ and $O\in \mathcal{O}_d$ such that $O^*\Sigma_\mu O$ and  $O^*\Sigma_\nu O$ share the correlation matrix~$C$ and $\textup{dg}(O^*\Sigma_\nu O)\le\textup{dg}(O^*\Sigma_\mu O)$. Then, we get that $\Sigma_\nu\le (\Sigma_\nu^{1/2}\Sigma_\mu\Sigma_\nu^{1/2})^{1/2}$ (resp. $(\Sigma^{1/2}_\mu\Sigma_\nu\Sigma_\mu^{1/2})^{1/2}\le\Sigma_\mu$)  by taking $\Sigma_1=\Sigma_\nu$ and $\Sigma_2=\Sigma_\mu$  (resp. $\Sigma_1=\Sigma_\mu$ and $\Sigma_2=\Sigma_\nu$) in Lemma~\ref{lem_calcD}  and multiplying to the left and to the right by $(O^*\Sigma_1O)^{1/2}$ the inequality $I_d\le D$ (resp. $D\le I_d$) satisfied by the diagonal matrix~$D$ defined therein and using \eqref{egqd}. The reciprocal is straightforward when $(\Sigma^{1/2}_\mu\Sigma_\nu\Sigma_\mu^{1/2})^{1/2}\le\Sigma_\mu$ and $\Sigma_\mu$ is invertible by using \eqref{egqd2} in Lemma~\ref{lem_calcD}. We now prove it in the general case and  assume $ \Sigma_\nu\le (\Sigma_\nu^{1/2}\Sigma_\mu\Sigma_\nu^{1/2})^{1/2}$.  
  From Theorem~\ref{thm_BW} we consider $C\in \mathfrak{C}_d$ and $O\in \mathcal{O}_d$  such that $O^*\Sigma_\mu O$ and  $O^*\Sigma_\nu O$ share the correlation matrix~$C$. We can do this in such a way that $\#\{i: (O^*\Sigma_\nu O)_{ii}>0\}=\textup{rk}(\Sigma_\nu)$.  Indeed, let $O_1 \in  \mathcal{O}_d$ be such that $O_1^*\Sigma_\nu O_1$ is diagonal with $(O_1^*\Sigma_\nu O_1)_{ii}>0$ for $i\le \textup{rk}(\Sigma_\nu)=:d'$. Then, by Theorem~\ref{thm_BW}, there exists $O_2  \in \mathcal{O}_{d'}$ such that $O_2^*(O_1^*\Sigma_\nu O_1)_{1:d',1:d'}O_2$ and $O_2^*(O_1^*\Sigma_\mu O_1)_{1:d',1:d'}O_2$ share the same correlation matrix. Let $O=O_1 \tilde{O}_2$, where $\tilde{O}_2\in\R^{d\times d}$ is the block diagonal matrix with diagonal blocks $O_2$ and $I_{d-d'}$. Then $O^*\Sigma_\mu O$ and  $O^*\Sigma_\nu O$ share the same correlation matrix. Then, by Lemma~\ref{lem_calcD}, we get 
  $$(O^*\Sigma_\nu O)^{1/2}\tilde{D}(O^*\Sigma_\nu O)^{1/2}=O^*(\Sigma_\nu^{1/2}\Sigma_\mu\Sigma_\nu^{1/2})^{1/2}O \ge  O^*\Sigma_\nu O,$$
with $\tilde{D}_{ii}=\sqrt{\frac{(O^*\Sigma_\mu O)_{ii}}{(O^*\Sigma_\nu O)_{ii}}}$ if $i\le d'$ and $\tilde{D}_{ii}=1$ otherwise. By construction, $O^*\Sigma_\nu O$ is a block diagonal matrix with the first block in $\cS_{d'}^{+,*}$, which gives $\tilde{D}_{ii}\ge 1$ for $i\le d'$, and therefore $\textup{dg}(O^*\Sigma_\nu O)\le \textup{dg}(O^*\Sigma_\mu O)$.
\end{proof}

\begin{remark}
  \begin{itemize}
    \item From the proof of Corollary~\ref{cormgn}, we see that when the equivalences hold, we have in fact $\textup{dg}(O^*\Sigma_\nu O)\le\textup{dg}(O^*\Sigma_\mu O)$ for any  $ C\in \mathfrak{C}_d$ and $O\in \mathcal{O}_d$ such that $O^*\Sigma_\mu O$ and $O^* \Sigma_\nu O$ share the correlation matrix $C$ and $\#\{i: (O^*\Sigma_\nu O)_{ii}>0\}=\textup{rk}(\Sigma_\nu)$.
    In particular, when $\Sigma_\nu$ is invertible, $\textup{dg}(O^*\Sigma_\nu O)\le\textup{dg}(O^*\Sigma_\mu O)$  for any  $ C\in \mathfrak{C}_d$ and $O\in \mathcal{O}_d$ such that $O^*\Sigma_\mu O$ and $O^* \Sigma_\nu O$ share the correlation matrix $C$.
  \item Let us point that the equivalence $\cI_2(\Sigma_\mu,\Sigma_\nu)=\Sigma_\nu \iff \exists C\in \mathfrak{C}_d,O\in \mathcal{O}_d$
    such that $O^*\Sigma_\mu O$ and $O^* \Sigma_\nu O$  share the  correlation matrix $C$  and $\textup{dg}(O^*\Sigma_\nu O)\le\textup{dg}(O^*\Sigma_\mu O)$  can be seen as a particular case of  \cite[Theorem 7.5]{KimRuan}. Indeed, if $X\sim \mathcal{N}_d(0,\Sigma_\mu)$ and $Y=ODO^*X\sim \mathcal{N}_d(0,\Sigma_\nu)$ with $D_{ii}=\sqrt{\frac{(O^*\Sigma_\nu O)_{ii}}{(O^*\Sigma_\mu O)_{ii}}}$  if $(O^*\Sigma_\mu O)_{ii}>0$, $D_{ii}=1$ otherwise, then $X$ and $Y$ are optimally coupled. The transport map $x\mapsto ODO^*x$ is the gradient of $\frac 12 x^* ODO^*x$ and is a convex contraction in the sense of~\cite[Definition 5.1]{KimRuan} since $ODO^*\le I_d$, using~\cite[Remark 5.3]{KimRuan}. In the same way, the equivalence $\cJ_2(\Sigma_\nu,\Sigma_\mu)=\Sigma_\nu \iff \exists C\in \mathfrak{C}_d,O\in \mathcal{O}_d$
    such that $O^*\Sigma_\mu O$ and $O^* \Sigma_\nu O$  share the  correlation matrix $C$  and $\textup{dg}(O^*\Sigma_\nu O)\le\textup{dg}(O^*\Sigma_\mu O)$ is also a consequence of~\cite[Theorem 7.7]{KimRuan}.
  \end{itemize}
  
\end{remark}

In the setting of Corollary~\ref{cormgn}, there exists $O \in \mathcal{O}_d$ such that $O^*\Sigma_\mu O$, $O^*\Sigma_\nu O$, $O^* \cI_2(\Sigma_\mu,\Sigma_\nu) O$ and $O^* \cJ_2(\Sigma_\mu,\Sigma_\nu) O$ share the same correlation matrix. By Remark~\ref{rk_thm_main}, this also  holds when $\Sigma_\mu \Sigma_\nu=\Sigma_\nu \Sigma_\mu$, in which case we can find $O\in \mathcal{O}_d$ such that all these matrices are diagonal and therefore share the identity correlation matrix. The next corollary embeds these particular cases, and gives a sufficient condition to have a common correlation matrix. Notably, the projections then are easy to calculate. 

\begin{corollary}\label{cor_DDhat}
  Let $O \in \mathcal{O}_d$ and $C\in \mathfrak{C}_d$ be such that $O^*\Sigma_\mu O$ and  $O^*\Sigma_\nu O$ share the correlation matrix~$C$. Let us assume that $(O^*\Sigma_\nu O)_{ii}>0$ and $(O^*\Sigma_\nu O)_{ii}>0$ for all $i$ and let define 
  \begin{align*}
    D&= \textup{diag}\left(1\wedge \sqrt{\frac{(O^*\Sigma_\nu O)_{11}}{(O^*\Sigma_\mu O)_{11}}},\dots,1\wedge \sqrt{\frac{(O^*\Sigma_\nu O)_{dd}}{(O^*\Sigma_\mu O)_{dd}}}\right)\\
    \hat{D}&=\textup{diag}\left(1\wedge \sqrt{\frac{(O^*\Sigma_\mu O)_{11}}{(O^*\Sigma_\nu O)_{11}}},\dots,1\wedge \sqrt{\frac{(O^*\Sigma_\mu O)_{dd}}{(O^*\Sigma_\nu O)_{dd}}}\right).
  \end{align*}
If $\hat{D} C \hat{D}\le C$, then we have $\cI_2(\Sigma_\mu,\Sigma_\nu)=ODO^*\Sigma_\mu O DO^*$ and $\cJ_2(\Sigma_\nu,\Sigma_\mu)=OD^{-1}O^*\Sigma_\nu O D^{-1}O^*$.
When $d=2$, $$\hat{D} C \hat{D}\le C \iff \hat{D}=I_2\mbox{ or } C_{12}^2\le 1-\frac{(\hat{D}_{11}-\hat{D}_{22})^2}{(1-\hat{D}_{11}\hat{D}_{22})^2},$$
and for $d\ge 3$, a necessary condition is to have $C_{ij}^2\left(1-\hat{D}_{ii}\hat{D}_{jj} \right)^2\le \left(1-\hat{D}_{ii}^2\right)\left(1-\hat{D}_{jj}^2\right)$, for all $1\le i<j\le d$.
\end{corollary}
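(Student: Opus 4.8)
The plan is to deduce the two formulas directly from Theorem~\ref{thmprojgauss} by showing that the hypothesis $\hat D C\hat D\le C$ is exactly the inequality that theorem requires for the orthogonal matrix $O$ already fixed in the statement. Writing $A=O^*\Sigma_\mu O$ and $B=O^*\Sigma_\nu O$, so that $A=\textup{dg}(A)^{1/2}C\textup{dg}(A)^{1/2}$ and $B=\textup{dg}(B)^{1/2}C\textup{dg}(B)^{1/2}$ with $\textup{dg}(A),\textup{dg}(B)$ invertible, the key elementary identity I would establish is
$$D_{ii}\sqrt{A_{ii}}=\sqrt{A_{ii}}\wedge\sqrt{B_{ii}}=\hat D_{ii}\sqrt{B_{ii}},$$
which, since $A$ and $B$ share the correlation matrix $C$, gives entrywise $DAD=\hat D B\hat D=\textup{dg}(B)^{1/2}\hat D C\hat D\,\textup{dg}(B)^{1/2}$ (diagonal matrices commuting with $\textup{dg}(B)$). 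As $\textup{dg}(B)$ is invertible, this yields the equivalence $DAD\le B\iff\hat D C\hat D\le C$, i.e. $DO^*\Sigma_\mu OD\le O^*\Sigma_\nu O\iff\hat D C\hat D\le C$; note that the convention on $1\wedge\sqrt{\cdot}$ is immaterial here because all diagonal entries are positive, so the $D$ of the corollary coincides with the one prescribed in Theorem~\ref{thmprojgauss}.

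Granting the hypothesis $\hat D C\hat D\le C$, the matrix $O$ is then an admissible orthogonal matrix in Theorem~\ref{thmprojgauss}, which immediately gives $\cI_2(\Sigma_\mu,\Sigma_\nu)=ODO^*\Sigma_\mu ODO^*$ and $\cJ_2(\Sigma_\nu,\Sigma_\mu)=O\tilde\Sigma_\cJ O^*$ with $\tilde\Sigma_\cJ$ defined by~\eqref{deftsigj}. Since every $(O^*\Sigma_\nu O)_{ii}$ is positive, only the second term of~\eqref{deftsigj} survives, so $(\tilde\Sigma_\cJ)_{ij}=(O^*\Sigma_\nu O)_{ij}/(D_{ii}D_{jj})$, i.e. $\tilde\Sigma_\cJ=D^{-1}O^*\Sigma_\nu O D^{-1}$ (here $D$ is invertible precisely because its entries are positive), whence $\cJ_2(\Sigma_\nu,\Sigma_\mu)=OD^{-1}O^*\Sigma_\nu OD^{-1}O^*$. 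This finishes the first assertion.

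For the explicit form of $\hat D C\hat D\le C$ when $d=2$, I would compute $C-\hat D C\hat D=\left(\begin{smallmatrix}1-\hat D_{11}^2 & (1-\hat D_{11}\hat D_{22})C_{12}\\ (1-\hat D_{11}\hat D_{22})C_{12} & 1-\hat D_{22}^2\end{smallmatrix}\right)$, whose diagonal entries are nonnegative because $\hat D_{ii}\le 1$; hence positivity is equivalent to nonnegativity of the determinant. If $\hat D=I_2$ that determinant vanishes; otherwise $\hat D_{11}\hat D_{22}<1$ (two numbers in $(0,1]$ with product $1$ are both $1$), and the determinant condition becomes $C_{12}^2\le(1-\hat D_{11}^2)(1-\hat D_{22}^2)/(1-\hat D_{11}\hat D_{22})^2$, which equals the stated bound $1-(\hat D_{11}-\hat D_{22})^2/(1-\hat D_{11}\hat D_{22})^2$ after the factorization $(1-\hat D_{11}\hat D_{22})^2-(\hat D_{11}-\hat D_{22})^2=(1-\hat D_{11}^2)(1-\hat D_{22}^2)$. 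For $d\ge 3$, restricting the inequality $\hat D C\hat D\le C$ to the principal $2\times2$ submatrix on indices $i<j$ (a principal submatrix of a positive semidefinite matrix being positive semidefinite) reduces to exactly this $2\times2$ computation, and nonnegativity of the relevant determinant is the necessary condition $C_{ij}^2(1-\hat D_{ii}\hat D_{jj})^2\le(1-\hat D_{ii}^2)(1-\hat D_{jj}^2)$.

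I do not expect a genuine obstacle: the substance is the identity $DAD=\hat D B\hat D$ and matching it with the hypothesis of Theorem~\ref{thmprojgauss}, after which everything is read off that theorem; the explicit conditions are routine $2\times2$ semidefiniteness computations. The only points requiring a little care are verifying that the prescribed $D$ agrees with the one in Theorem~\ref{thmprojgauss} and that~\eqref{deftsigj} collapses to $D^{-1}O^*\Sigma_\nu OD^{-1}$ under the positivity of the diagonal entries.
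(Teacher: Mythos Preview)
Your proof is correct and follows essentially the same route as the paper: establish $DO^*\Sigma_\mu OD\le O^*\Sigma_\nu O$ via the shared correlation matrix (the paper writes the key line $DO^*\Sigma_\mu OD=\textup{dg}(O^*\Sigma_\nu O)^{1/2}\hat DC\hat D\,\textup{dg}(O^*\Sigma_\nu O)^{1/2}$, which is your identity $DAD=\hat DB\hat D$) and then invoke Theorem~\ref{thmprojgauss}, with the $d=2$ and $d\ge 3$ conditions obtained from the determinant of the relevant $2\times 2$ matrix. Your version is slightly more explicit in verifying that \eqref{deftsigj} collapses to $D^{-1}O^*\Sigma_\nu OD^{-1}$, but this is already noted in Remark~\ref{rk_thm_main}.
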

Before proving this result, we note that the condition $\hat{D} C \hat{D}\le C$ is trivially satisfied when $C=I_d$. Heuristically, we may hope that it is still satisfied if the correlations are sufficiently small. By contrast, if $C$ is such that $C_{ij}=1$ for all $1\le i \le j\le d$, then the necessary condition gives $\hat{D}_{ii}^2+\hat{D}_{jj}^2 \le 2\hat{D}_{ii}\hat{D}_{jj}$, and thus $\hat{D}=\sqrt{c} I_d$ with $c\in (0,1]$. This occurs either if $\Sigma_\mu=c\Sigma_\nu$ or $(O^*\Sigma_\mu O)_{ii}\ge(O^*\Sigma_\nu O)_{ii}$ for all $i$. The latter condition also appears in Corollary~\ref{cormgn}, and thus the condition $\hat{D} C \hat{D}\le C$ implies either $\cI_2(\Sigma_\mu,\Sigma_\nu)=\Sigma_\mu$ or $\cI_2(\Sigma_\mu,\Sigma_\nu)=\Sigma_\nu$ for this particular correlation matrix.

\begin{proof}
  Let us recall that the existence of $O$ and $C$ is given by Theorem~\ref{thm_BW}. Thus, $O^*\Sigma_\mu O=\textup{dg}(O^*\Sigma_\mu O)^{1/2} C\textup{dg}(O^*\Sigma_\mu O)^{1/2}$ and $O^*\Sigma_\nu O=\textup{dg}(O^*\Sigma_\nu O)^{1/2} C\textup{dg}(O^*\Sigma_\nu O)^{1/2}$. We then get 
  \begin{align*}
    D O^*\Sigma_\mu O D=\textup{dg}(O^*\Sigma_\nu O)^{1/2} \hat{D} C \hat{D} \textup{dg}(O^*\Sigma_\nu O)^{1/2} \le O^*\Sigma_\nu O,
  \end{align*}
by using that $\hat{D} C \hat{D}\le C$. Theorem~\ref{thmprojgauss} gives the formulas for the projections.

In dimension~$d=2$, since the diagonal coefficients of $\hat{D} C \hat{D}$ are not greater than those of $C$, we have $\hat{D} C \hat{D}\le C \iff \det(C-\hat{D} C \hat{D})\ge 0$. In higher dimension, any minor of size~$2$ has to be non-negative, which gives the necessary condition.
\end{proof}

Corollary~\ref{cor_DDhat} gives a partial way to calculate the projections $\cI_2(\Sigma_\mu,\Sigma_\nu)$ and $\cJ_2(\Sigma_\mu,\Sigma_\nu)$. We first calculate $O \in \mathcal{O}_d$ and $C\in \mathfrak{C}_d$ such that $O^*\Sigma_\mu O$ and  $O^*\Sigma_\nu O$ share the correlation matrix~$C$. This can be done by diagonalising the matrix $\Sigma_\mu^{-1/2}(\Sigma_\mu^{1/2} \Sigma_\nu \Sigma_\mu^{1/2})^{1/2}\Sigma_\mu^{-1/2}$ when $\Sigma_\mu\in \mathcal{S}_d^{+,*}$ or the matrix $\Sigma_\nu^{-1/2}(\Sigma_\nu^{1/2} \Sigma_\mu \Sigma_\nu^{1/2})^{1/2}\Sigma_\nu^{-1/2}$ when $\Sigma_\nu\in \mathcal{S}_d^{+,*}$. Then, if $\hat{D} C \hat{D}\le C$, the projections $\cI_2(\Sigma_\mu,\Sigma_\nu)$ and $\cJ_2(\Sigma_\mu,\Sigma_\nu)$ can be easily calculated. Unfortunately, this condition may not hold, and we present in the next subsection a general method to calculate these projections. 

\subsection{A projected gradient algorithm to calculate $\cI_2(\Sigma_\mu,\Sigma_\nu)$ and $\cJ_2(\Sigma_\mu,\Sigma_\nu)$}

Since, by Lemma~\ref{lemconvw2}, the map $\mathcal{S}_d^+\ni \Sigma \mapsto \BW^2(\Sigma_\mu,\Sigma)$ (resp. $\mathcal{S}_d^+\ni \Sigma \mapsto \BW^2(\Sigma_\nu,\Sigma)$) is convex, the numerical calculation of $\cI_2(\Sigma_\mu,\Sigma_\nu)$ (resp. $\cJ_2(\Sigma_\mu,\Sigma_\nu)$) can be performed by a projected gradient descent on the  closed convex set $\{ \Sigma \in  \cS_d^+ : \Sigma \le \Sigma_\nu \}$ (resp. $\{ \Sigma \in  \cS_d^+ : \Sigma \ge \Sigma_\mu \}$). The gradient of the above mapping is given by Lemma~\ref{lem_trace} : for $\Sigma,\Sigma_\mu \in \mathcal{S}_d^{+,*}$, $\nabla_{\Sigma}\BW^2(\Sigma_\mu,\Sigma)=I_d-\Sigma_\mu^{1/2}(\Sigma^{1/2}_\mu\Sigma\Sigma^{1/2}_\mu)^{-1/2}\Sigma^{1/2}_\mu$.  The projections on the closed convex sets $\{ \Sigma \in  \cS_d : \Sigma \le \Sigma_\nu \}$ (resp. $\{ \Sigma \in  \cS_d^+ : \Sigma \ge \Sigma_\mu \}$) for the Euclidean norm, also known as the Frobenius norm, $\R^{d\times d}\ni A\mapsto \sqrt{\tr(AA^*)}$ on the set of square matrices is given by the next proposition. Note that the projection on $\{ \Sigma \in  \cS_d^+ : \Sigma \le \Sigma_\nu \}$ is, up to our knowledge, not explicit. 

\begin{proposition}
  Let $\Sigma_\mu, \Sigma_\nu \in \cS_d^+$.  The Frobenius projection of $\Sigma_\mu$ (resp. $\Sigma_\nu$) on the closed convex set $\{ \Sigma \in  \cS_d : \Sigma \le \Sigma_\nu \}$ (resp. $\{ \Sigma \in  \cS_d^+ : \Sigma \ge \Sigma_\mu \}$) is given by 
  $$ \Sigma_\mu-(\Sigma_\mu-\Sigma_\nu)^+ \ (\text{resp. } \Sigma_\nu + (\Sigma_\mu-\Sigma_\nu)^+ ),$$
  where, for a symmetric matrix $M=ODO^*$ with $O\in\cO_d$ and $D$ diagonal, $M^+=OD^+O^*$ with $D^+$  being the diagonal matrix such that $(D^+)_{ii}=\max(D_{ii},0)$. 
\end{proposition}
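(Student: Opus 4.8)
The plan is to work in the finite-dimensional Euclidean space $\cS_d$ equipped with the Frobenius inner product $\langle A,B\rangle=\tr(AB)$, whose associated norm is the one in the statement, and to invoke the standard variational characterization of the metric projection $P_K$ onto a nonempty closed convex set $K$: a point $\Sigma^\star$ equals $P_K(\Sigma)$ if and only if $\Sigma^\star\in K$ and $\langle\Sigma-\Sigma^\star,\Sigma'-\Sigma^\star\rangle\le 0$ for every $\Sigma'\in K$. First I would record that both constraint sets are nonempty, closed and convex, and that since $\Sigma_\mu\in\cS_d^+$ the inequality $\Sigma\ge\Sigma_\mu$ already forces $\Sigma\in\cS_d^+$, so that $\{\Sigma\in\cS_d^+:\Sigma\ge\Sigma_\mu\}=\{\Sigma\in\cS_d:\Sigma\ge\Sigma_\mu\}$.

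The key elementary object is the Jordan-type decomposition of the symmetric matrix $M:=\Sigma_\mu-\Sigma_\nu$. Writing $M=ODO^*$, I set $M^+=OD^+O^*$ and $M^-=OD^-O^*$ with $(D^-)_{ii}=\min(D_{ii},0)$, so that $M=M^++M^-$, $M^+\in\cS_d^+$, $-M^-\in\cS_d^+$, and $\langle M^+,M^-\rangle=\tr(D^+D^-)=0$. I would also use the two trace facts: $\langle A,B\rangle=\tr(A^{1/2}BA^{1/2})\le 0$ whenever $A\in\cS_d^+$ and $-B\in\cS_d^+$, and symmetrically $\langle A,B\rangle\ge 0$ when $A,B\in\cS_d^+$.

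For the first projection I would take the candidate $\Sigma^\star=\Sigma_\mu-(\Sigma_\mu-\Sigma_\nu)^+=\Sigma_\mu-M^+=\Sigma_\nu+M^-$; feasibility ($\Sigma^\star\le\Sigma_\nu$) is immediate from $-M^-\in\cS_d^+$. Then for any $\Sigma'$ with $\Sigma'\le\Sigma_\nu$,
\[
\langle\Sigma_\mu-\Sigma^\star,\Sigma'-\Sigma^\star\rangle=\langle M^+,(\Sigma'-\Sigma_\nu)-M^-\rangle=\langle M^+,\Sigma'-\Sigma_\nu\rangle\le 0,
\]
using $\langle M^+,M^-\rangle=0$ and then $\Sigma'-\Sigma_\nu\in-\cS_d^+$, $M^+\in\cS_d^+$; the variational characterization identifies $\Sigma^\star$ as the desired projection. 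For the second projection the symmetric candidate is $\Sigma^\star=\Sigma_\nu+(\Sigma_\mu-\Sigma_\nu)^+=\Sigma_\nu+M^+=\Sigma_\mu-M^-$, which satisfies $\Sigma^\star\ge\Sigma_\mu$ and hence lies in $\cS_d^+$; for any $\Sigma'\ge\Sigma_\mu$,
\[
\langle\Sigma_\nu-\Sigma^\star,\Sigma'-\Sigma^\star\rangle=\langle-M^+,(\Sigma'-\Sigma_\mu)+M^-\rangle=-\langle M^+,\Sigma'-\Sigma_\mu\rangle\le 0
\]
since $\Sigma'-\Sigma_\mu\in\cS_d^+$, which gives the claimed formula.

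There is no genuine obstacle here: the only step needing a little care is the sign bookkeeping in $M=M^++M^-$ together with the two facts $\langle M^+,M^-\rangle=0$ and $\langle A,B\rangle\le 0$ for $A\in\cS_d^+$, $-B\in\cS_d^+$, which is where the spectral theorem for symmetric matrices is actually invoked; everything else is a direct application of the projection inequality. Alternatively, one could present the same computation as the composition of the translation identity $P_{\Sigma_0+K}(\cdot)=\Sigma_0+P_K(\cdot-\Sigma_0)$ with the classical fact $P_{\cS_d^+}(M)=M^+$, but the direct verification above is the shortest route.
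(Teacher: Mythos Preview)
Your proof is correct and is essentially the same as the paper's: both hinge on the spectral splitting $M=\Sigma_\mu-\Sigma_\nu=M^++M^-$ with $\langle M^+,M^-\rangle=0$ together with the sign of $\tr(AB)$ for $A\in\cS_d^+$ and $\pm B\in\cS_d^+$. The only cosmetic difference is that you verify the variational (obtuse-angle) inequality directly, whereas the paper expands $\tr((\Sigma-\Sigma_\mu)^2)$ and bounds it below by $\tr(((\Sigma_\mu-\Sigma_\nu)^+)^2)$; these are two equivalent phrasings of the same computation.
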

\begin{proof}
  We first notice that  $0\le(\Sigma_\mu-\Sigma_\nu)^++\Sigma_\nu-\Sigma_\mu$ and thus $\Sigma_\mu-(\Sigma_\mu-\Sigma_\nu)^+\le \Sigma_\nu$. 
  Let $\Sigma \in  \cS_d^+$ such that $\Sigma \le \Sigma_\nu$.
  We have $\Sigma-\Sigma_\mu= \left( \Sigma-\Sigma_\mu+(\Sigma_\mu-\Sigma_\nu)^+\right)-(\Sigma_\mu-\Sigma_\nu)^+$ and thus 
\begin{align*}
  {\rm tr}\left( (\Sigma-\Sigma_\mu)^2\right)=&{\rm tr}\left(\left( \Sigma-\Sigma_\mu+(\Sigma_\mu-\Sigma_\nu)^+\right)^2\right)+{\rm tr}\left( ((\Sigma_\mu-\Sigma_\nu)^+)^2\right)\\
  & -2{\rm tr}\left(\left( \Sigma-\Sigma_\nu+\Sigma_\nu-\Sigma_\mu+(\Sigma_\mu-\Sigma_\nu)^+\right) (\Sigma_\mu-\Sigma_\nu)^+\right) \\\ge&  {\rm tr}\left(\left( \Sigma-\Sigma_\mu+(\Sigma_\mu-\Sigma_\nu)^+\right)^2\right)+{\rm tr}\left( ((\Sigma_\mu-\Sigma_\nu)^+)^2\right),
\end{align*}
since ${\rm tr}((\Sigma-\Sigma_\nu)(\Sigma_\mu-\Sigma_\nu)^+)\le 0$ and  $\left( \Sigma_\nu-\Sigma_\mu+(\Sigma_\mu-\Sigma_\nu)^+\right) (\Sigma_\mu-\Sigma_\nu)^+=0$. This shows the optimality of $\Sigma_\mu-(\Sigma_\mu-\Sigma_\nu)^+$, and the proof is similar for the other projection.
\end{proof}

It is important to note that the Frobenius projection on $\{ \Sigma \in  \cS_d : \Sigma \le \Sigma_\nu \}$ may not be positive semidefinite because the positive part may not preserve the matrix order, see for instance~\cite[Remark 7 (3)]{joupagm}. Thus, it more convenient to work with the other projection and to calculate first $\cJ_2(\Sigma_\nu,\Sigma_\mu)$. 
For $\Sigma_\mu,\Sigma_\nu \in \cS_d^{+,*}$, the projected gradient descent for the calculation of   $\cJ_2(\Sigma_\nu,\Sigma_\mu)$ can be written as follows. 
Let $(\eta_i)_{i\ge 1}$ be a nonincreasing sequence of positive numbers. Initialize $\Sigma=\Sigma_\nu+(\Sigma_\mu-\Sigma_\nu)^+$, $i=1$ and repeat, until some stopping criterion, the following steps
\begin{itemize}
\item $\Sigma \leftarrow \Sigma - \eta_i \left(I_d-\Sigma_\nu^{1/2}(\Sigma_\nu^{1/2} \Sigma \Sigma_\nu^{1/2})^{-1/2}\Sigma_\nu^{1/2}\right)$,
\item $\Sigma \leftarrow \Sigma+(\Sigma_\mu-\Sigma)^+$,
\item $i\leftarrow i+1$. 
\end{itemize}
In our numerical experiments in small dimension, a constant time-step $\eta_i$ gradient descent converges rather quickly.

Once $ \cJ_2(\Sigma_\nu,\Sigma_\mu)$ is known, we can compute $\cI_2(\Sigma_\mu,\Sigma_\nu)$ by using Theorem~\ref{thmprojgauss} as follows. We compute $O\in{\mathcal O}_d$ such that $O^*\Sigma_\nu^{-1/2}(\Sigma_\nu^{1/2} \cJ_2(\Sigma_\nu,\Sigma_\mu) \Sigma_\nu^{1/2})^{1/2}\Sigma_\nu^{-1/2}O$ is diagonal, $D={\rm diag}\left(\sqrt{\frac{(O^* \Sigma_\nu O)_{11}}{(O^* \cJ_2(\Sigma_\nu,\Sigma_\mu) O)_{11}}},\cdots,\sqrt{\frac{(O^* \Sigma_\nu O)_{dd}}{(O^* \cJ_2(\Sigma_\nu,\Sigma_\mu) O)_{dd}}}\right)$ and $$\cI_2(\Sigma_\mu,\Sigma_\nu)=ODO^*\Sigma_\mu ODO^*.$$
Indeed, $O^*\Sigma_\nu O$ and $O^* \cJ_2(\Sigma_\nu,\Sigma_\mu) O$ share the same correlation matrix by~\eqref{O_share_correl} and $O^*\Sigma_\nu O=D O^* \cJ_2(\Sigma_\nu,\Sigma_\mu) O D \ge D O^* \Sigma_\mu O D$. According to the beginning of the proof of Proposition~\ref{prop_existsO}, $D={\rm diag}\left(1\wedge\sqrt{\frac{(O^*\Sigma_\nu O)_{11}}{(O^*\Sigma_\mu O)_{11}}},\cdots,1\wedge\sqrt{\frac{(O^*\Sigma_\nu O)_{dd}}{(O^*\Sigma_\mu O)_{dd}}}\right)$ so that the equality giving $\cI_2(\Sigma_\mu,\Sigma_\nu)$ holds by Theorem~\ref{thmprojgauss}.

\appendix
 
 \section{Technical results}

\begin{lemma}\label{ordsq}
  Let $M,N\in\cS^+_d$ be such that $M\le N$. Then $M^{1/2}\le N^{1/2}$.
\end{lemma}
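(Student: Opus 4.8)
The statement is the classical fact that $x \mapsto x^{1/2}$ is operator monotone on $\cS^+_d$, and I would give the standard self-contained argument. The plan is to first reduce to the positive definite case by a perturbation argument: if $M \le N$ with $M, N \in \cS^+_d$, then for every $\epsilon > 0$ we have $M + \epsilon I_d \le N + \epsilon I_d$ with both matrices in $\cS^{+,*}_d$; if the result is known for positive definite matrices it gives $(M+\epsilon I_d)^{1/2} \le (N+\epsilon I_d)^{1/2}$, and letting $\epsilon \to 0^+$ and using continuity of the square root on $\cS^+_d$ yields $M^{1/2} \le N^{1/2}$. So it suffices to treat $M, N \in \cS^{+,*}_d$.

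For the positive definite case, the key step is the following observation: a matrix $A \in \cS^{+,*}_d$ satisfies $A \le I_d$ if and only if its spectral radius (equivalently, its largest eigenvalue) is at most $1$, if and only if $\|A\|_{op} \le 1$. Now set $A = N^{-1/4} M^{1/2} N^{-1/4}$, which is symmetric positive definite. The plan is to show $A \le I_d$. We have
\begin{equation}
\|A\|_{op} = \|N^{-1/4} M^{1/2} N^{-1/4}\|_{op} = \|N^{-1/4} M^{1/4} M^{1/4} N^{-1/4}\|_{op} = \|M^{1/4} N^{-1/4}\|_{op}^2,
\end{equation}
using that for any square matrix $B$, $\|B B^*\|_{op} = \|B\|_{op}^2$, applied with $B = N^{-1/4} M^{1/4}$ (so $B^* = M^{1/4} N^{-1/4}$). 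On the other hand, from $M \le N$ we get $N^{-1/2} M N^{-1/2} \le I_d$, hence $\|N^{-1/4} M^{1/2} \cdots\|$ — more directly, $\|M^{1/2} N^{-1/2}\|_{op}^2 = \|N^{-1/2} M N^{-1/2}\|_{op} \le 1$, so $\|M^{1/4} N^{-1/4}\|_{op} = \|(M^{1/4} N^{-1/4})\|_{op}$; here one uses $\|M^{1/4} N^{-1/4}\|_{op}^2 = \|N^{-1/4} M^{1/2} N^{-1/4}\|_{op} = \|A\|_{op}$, which is exactly the quantity we want to bound, so this needs care — instead I bound it via $\|M^{1/4} N^{-1/4}\|_{op} \le \|M^{1/4}\|_{op}\,\|N^{-1/4}\|_{op}$ is too lossy. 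The clean route: from $M \le N$, conjugating by $N^{-1/2}$ gives $N^{-1/2} M N^{-1/2} \le I_d$, so all eigenvalues of $N^{-1/2} M N^{-1/2}$ lie in $(0,1]$; since $N^{-1/2} M N^{-1/2}$ and $N^{-1/4} M^{1/2} N^{-1/4}$ — no. Let me instead use: $A^2 = N^{-1/4} M^{1/2} N^{-1/2} M^{1/2} N^{-1/4}$ is similar to $M^{1/2} N^{-1/2} M^{1/2} N^{-1/2} \cdot$ — this is getting complicated, so the genuinely clean argument is: $A = N^{-1/4} M^{1/2} N^{-1/4}$ is similar (via $N^{1/4}$) to $N^{-1/2} M^{1/2}$, hence has the same spectrum; but $N^{-1/2} M^{1/2}$ need not be symmetric. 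Its spectrum equals that of the symmetric matrix $A$, so all eigenvalues of $A$ are positive reals, and $\rho(A) = \rho(N^{-1/2} M^{1/2}) \le \|N^{-1/2} M^{1/2}\|_{op}$. Finally $\|N^{-1/2} M^{1/2}\|_{op}^2 = \|M^{1/2} N^{-1} M^{1/2}\|_{op}$, and $M^{1/2} N^{-1} M^{1/2} \le M^{1/2} M^{-1} M^{1/2} = I_d$ because $M \le N \Rightarrow N^{-1} \le M^{-1}$ (operator antitonicity of inversion, itself elementary: $M \le N$ gives $M^{-1/2} N M^{-1/2} \ge I_d$, invert, conjugate back). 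Hence $\rho(A) \le 1$, so $A \le I_d$, i.e. $N^{-1/4} M^{1/2} N^{-1/4} \le I_d$; conjugating by $N^{1/4}$ gives $M^{1/2} \le N^{1/2}$.

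The main obstacle is purely bookkeeping: keeping straight the two elementary sub-lemmas (operator antitonicity of $X \mapsto X^{-1}$ on $\cS^{+,*}_d$, and the fact that for symmetric $A \in \cS^{+,*}_d$, $A \le I_d \iff \rho(A) \le 1$) and the identity $\rho(BC) = \rho(CB)$ together with $\rho(S) = \|S\|_{op}$ for symmetric $S$ — none of which is deep, but each must be invoked cleanly. Alternatively, and perhaps more transparently for the write-up, one can avoid spectral-radius gymnastics entirely: prove operator antitonicity of inversion first, then note $M \le N \Rightarrow M^{1/2} N^{-1} M^{1/2} \le I_d$; the desired $M^{1/2} \le N^{1/2}$ is equivalent, after conjugating by $N^{-1/4}$, to $N^{-1/4} M^{1/2} N^{-1/4} \le I_d$, which holds since this matrix is congruent — hence has nonnegative... — I would ultimately present the argument via the congruence $N^{-1/4} M^{1/2} N^{-1/4}$ being similar to $N^{-1/2} M^{1/2}$ and bounding the latter's norm by $\|M^{1/2} N^{-1} M^{1/2}\|_{op}^{1/2} \le 1$. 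This is the shortest honest path and I would write it up in that order: (1) reduce to positive definite by $\epsilon I_d$ perturbation; (2) prove $N^{-1} \le M^{-1}$; (3) deduce $\|N^{-1/2} M^{1/2}\|_{op} \le 1$; (4) conclude $M^{1/2} \le N^{1/2}$.
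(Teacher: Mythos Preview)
Your final four-step plan is correct and does prove the lemma, though the write-up leading to it is rather meandering. The paper, however, takes a much shorter and more elementary route that avoids the perturbation to $\cS^{+,*}_d$, the antitonicity of inversion, and the spectral-radius/operator-norm machinery entirely. It works directly on $N^{1/2}-M^{1/2}$: for an eigenpair $(\lambda,\xi)$ of $N^{1/2}-M^{1/2}$, the identity
\[
N-M=\tfrac12\bigl((N^{1/2}-M^{1/2})(N^{1/2}+M^{1/2})+(N^{1/2}+M^{1/2})(N^{1/2}-M^{1/2})\bigr)
\]
gives $0\le\xi^*(N-M)\xi=\lambda\,\xi^*(N^{1/2}+M^{1/2})\xi$, and since $N^{1/2}+M^{1/2}\ge \pm(N^{1/2}-M^{1/2})$ one has $\xi^*(N^{1/2}+M^{1/2})\xi\ge|\lambda||\xi|^2$, forcing $\lambda\ge 0$. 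This is five lines, uses no limits or inverses, and applies directly on all of $\cS^+_d$. Your approach is the standard L\"owner--Heinz-style argument and generalizes (with more work) to other operator-monotone powers; the paper's trick is specific to the square root but is cleaner here and worth knowing.
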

For the sake of completeness, we give an elementary proof of this standard result.
\begin{proof}
  Let $\lambda$ be an eigenvalue of $N^{1/2}-M^{1/2}$ and $\xi\in\R^d\setminus\{0\}$ be an associated eigenvector. Since $N-M=\frac 12\left((N^{1/2}-M^{1/2})(N^{1/2}+M^{1/2})+(N^{1/2}+M^{1/2})(N^{1/2}-M^{1/2})\right)$, we have
  \begin{align*}
     0\le \xi^*(N-M)\xi=\lambda \xi^*(N^{1/2}+M^{1/2})\xi.
  \end{align*}
As $\xi^*(N^{1/2}+M^{1/2})\xi\ge \left(\xi^*(N^{1/2}-M^{1/2})\xi\right)\vee \left(\xi^*(M^{1/2}-N^{1/2})\xi\right)=|\lambda||\xi|^2$, we deduce that either $\lambda=0$ or $\lambda>0$.  
\end{proof}
 \begin{lemma}\label{lemconvw2}
  For fixed $\tilde\Sigma\in\cS^+_d$, $\cS^+_d\ni \Sigma\mapsto{\rm tr}\left(\left(\tilde\Sigma^{1/2}\Sigma\tilde\Sigma^{1/2}\right)^{1/2}\right)$ is concave and $\cS^+_d\ni \Sigma\mapsto\cW_2^2(\cN_d(0,\Sigma),\cN_d(0,\tilde \Sigma))=\BW^2(\Sigma,\tilde{\Sigma})$ is convex.
\end{lemma}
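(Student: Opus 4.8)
The plan is to deduce both claims from the dual representation of the Bures--Wasserstein distance recalled just above, together with the elementary principle that the partial supremum of a jointly concave function over a convex set is concave in the remaining variables.

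Fix $\tilde\Sigma\in\cS^+_d$ and, for $\Sigma\in\cS_d$ and $\Theta\in\R^{d\times d}$, write $\Gamma_{\Sigma,\Theta}=\left(\begin{array}{cc}\Sigma&\Theta\\\Theta^*&\tilde\Sigma\end{array}\right)$. First I would record, via \eqref{w22} and \eqref{w2gauss}, that for every $\Sigma\in\cS^+_d$,
$$\tr\left(\left(\tilde\Sigma^{1/2}\Sigma\tilde\Sigma^{1/2}\right)^{1/2}\right)=\tr\left(\left(\Sigma^{1/2}\tilde\Sigma\Sigma^{1/2}\right)^{1/2}\right)=\sup\left\{\tr(\Theta):\Theta\in\R^{d\times d},\ \Gamma_{\Sigma,\Theta}\in\cS^+_{2d}\right\}.$$
Next I would consider $\mathcal C=\{(\Sigma,\Theta)\in\cS_d\times\R^{d\times d}:\Gamma_{\Sigma,\Theta}\in\cS^+_{2d}\}$. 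Since $(\Sigma,\Theta)\mapsto\Gamma_{\Sigma,\Theta}$ is affine and $\cS^+_{2d}$ is a convex cone, $\mathcal C$ is convex; moreover $\Gamma_{\Sigma,\Theta}\in\cS^+_{2d}$ forces $\Sigma\in\cS^+_d$, whereas $(\Sigma,0)\in\mathcal C$ for every $\Sigma\in\cS^+_d$, so the projection of $\mathcal C$ onto the first coordinate equals $\cS^+_d$. As $(\Sigma,\Theta)\mapsto\tr(\Theta)$ is linear, the key step is the standard partial-maximization argument: given $\Sigma_0,\Sigma_1\in\cS^+_d$, $\lambda\in[0,1]$, and $\varepsilon$-optimal feasible $\Theta_0,\Theta_1$, the pair $(\lambda\Sigma_0+(1-\lambda)\Sigma_1,\lambda\Theta_0+(1-\lambda)\Theta_1)$ lies in $\mathcal C$, hence the supremum at $\lambda\Sigma_0+(1-\lambda)\Sigma_1$ is at least $\lambda\tr(\Theta_0)+(1-\lambda)\tr(\Theta_1)$; letting $\varepsilon\to0$ yields concavity of $\Sigma\mapsto\sup\{\tr(\Theta):(\Sigma,\Theta)\in\mathcal C\}$ on $\cS^+_d$. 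By the displayed identity this supremum is finite and equals $\tr\!\left((\tilde\Sigma^{1/2}\Sigma\tilde\Sigma^{1/2})^{1/2}\right)$, which proves the first assertion.

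For the second assertion, it then suffices to note that by \eqref{w2gauss},
$$\BW^2(\Sigma,\tilde\Sigma)=\tr(\Sigma)+\tr(\tilde\Sigma)-2\,\tr\left(\left(\tilde\Sigma^{1/2}\Sigma\tilde\Sigma^{1/2}\right)^{1/2}\right),$$
the sum of the linear map $\Sigma\mapsto\tr(\Sigma)$, a constant, and $-2$ times a concave function, hence convex on $\cS^+_d$.

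There is no deep obstacle here; the one point that warrants care is to phrase the partial-maximization step correctly, namely to read the constraint $\Gamma_{\Sigma,\Theta}\in\cS^+_{2d}$ as carving out a single convex subset of the product space $\cS_d\times\R^{d\times d}$ — so that convex combinations of feasible pairs remain feasible — and to check that the resulting function has full domain $\cS^+_d$ and is finite (which is why the identity above, rather than just an inequality, is invoked). A shorter but less self-contained alternative would be to invoke the operator concavity of $t\mapsto t^{1/2}$, which makes $A\mapsto\tr(A^{1/2})$ concave on $\cS^+_d$, and precompose it with the linear map $\Sigma\mapsto\tilde\Sigma^{1/2}\Sigma\tilde\Sigma^{1/2}$.
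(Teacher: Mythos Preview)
Your proof is correct and takes a genuinely different route from the paper's. The paper argues directly at the matrix level: it expands $\bigl(\alpha(\tilde\Sigma^{1/2}\Sigma_1\tilde\Sigma^{1/2})^{1/2}+(1-\alpha)(\tilde\Sigma^{1/2}\Sigma_2\tilde\Sigma^{1/2})^{1/2}\bigr)^2$, bounds the cross terms via Cauchy--Schwarz and Young's inequality to obtain an operator inequality, invokes the operator monotonicity of the square root (Lemma~\ref{ordsq}), and then takes traces. Your argument instead exploits the variational characterization $\tr\bigl((\tilde\Sigma^{1/2}\Sigma\tilde\Sigma^{1/2})^{1/2}\bigr)=\sup\{\tr(\Theta):\Gamma_{\Sigma,\Theta}\in\cS^+_{2d}\}$ already recorded before~\eqref{w2gauss}, and reads off concavity from the fact that a linear objective partially maximized over a convex constraint set is concave in the remaining variable. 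Your approach is shorter and more conceptual, and it makes transparent why the affine dependence of $\Gamma_{\Sigma,\Theta}$ on $\Sigma$ is the only structural input needed; the paper's approach is more self-contained in that it does not rely on the optimal-$\Theta$ formula quoted from \cite{Dowlan,GiSho,Olpuk}, and along the way it actually proves the slightly stronger operator inequality underlying the concavity, not just the trace inequality. Both deduce the convexity of $\BW^2(\cdot,\tilde\Sigma)$ identically from~\eqref{w2gauss}.
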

\noindent This Lemma is a consequence of~\cite[Theorem 7]{BhJaLi19}, but we give a proof for reader's convenience. 
\begin{proof}
  For $\Sigma_1,\Sigma_2\in\cS^+_d$ and $\alpha\in[0,1]$, we have
  \begin{align*}
     &\left(\alpha(\tilde\Sigma^{1/2}\Sigma_1\tilde\Sigma^{1/2})^{1/2}+(1-\alpha)(\tilde\Sigma^{1/2}\Sigma_2\tilde\Sigma^{1/2})^{1/2}\right)^2=\alpha^2\tilde\Sigma^{1/2}\Sigma_1\tilde\Sigma^{1/2}+(1-\alpha)^2\tilde\Sigma^{1/2}\Sigma_2\tilde\Sigma^{1/2}\\&\;\;\;\;+\alpha(1-\alpha)((\tilde\Sigma^{1/2}\Sigma_1\tilde\Sigma^{1/2})^{1/2}(\tilde\Sigma^{1/2}\Sigma_2\tilde\Sigma^{1/2})^{1/2}+(\tilde\Sigma^{1/2}\Sigma_2\tilde\Sigma^{1/2})^{1/2}(\tilde\Sigma^{1/2}\Sigma_1\tilde\Sigma^{1/2})^{1/2}).
  \end{align*}
  Since for $\xi\in\R^d$, by the Cauchy-Schwarz and Young's inequalities,
\begin{align*}
  \xi^*(\tilde\Sigma^{1/2}\Sigma_1\tilde\Sigma^{1/2})^{1/2}(\tilde\Sigma^{1/2}\Sigma_2\tilde\Sigma^{1/2})^{1/2}\xi&\le \sqrt{\xi^*\tilde\Sigma^{1/2}\Sigma_1\tilde\Sigma^{1/2}\xi}\times\sqrt{\xi^*\tilde\Sigma^{1/2}\Sigma_2\tilde\Sigma^{1/2}\xi }\\&\le\frac 1 2\left(\xi^*\tilde\Sigma^{1/2}\Sigma_1\tilde\Sigma^{1/2}\xi+\xi^*\tilde\Sigma^{1/2}\Sigma_2\tilde\Sigma^{1/2}\xi\right),
\end{align*} we deduce that
$$\left(\alpha(\tilde\Sigma^{1/2}\Sigma_1\tilde\Sigma^{1/2})^{1/2}+(1-\alpha)(\tilde\Sigma^{1/2}\Sigma_2\tilde\Sigma^{1/2})^{1/2}\right)^2\le \alpha\tilde\Sigma^{1/2}\Sigma_1\tilde\Sigma^{1/2}+(1-\alpha)\tilde\Sigma^{1/2}\Sigma_2\tilde\Sigma^{1/2}.$$
By Lemma \ref{ordsq}, we deduce that $$\alpha(\tilde\Sigma^{1/2}\Sigma_1\tilde\Sigma^{1/2})^{1/2}+(1-\alpha)(\tilde\Sigma^{1/2}\Sigma_2\tilde\Sigma^{1/2})^{1/2}\le (\tilde\Sigma^{1/2}\left(\alpha\Sigma_1+(1-\alpha)\Sigma_2\right)\tilde\Sigma^{1/2})^{1/2}.$$ Since $\tr(M)\le\tr(N)$ for $M,N\in\cS^+_d$ such that $M\le N$, we conclude that $\cS^+_d\ni \Sigma\mapsto{\rm tr}\left(\left(\tilde\Sigma^{1/2}\Sigma\tilde\Sigma^{1/2}\right)^{1/2}\right)$ is concave. The convexity of $\cS^+_d\ni \Sigma\mapsto\cW_2^2(\cN_d(0,\Sigma),\cN_d(0,\tilde \Sigma))$ follows by \eqref{w2gauss} and the linearity of the trace.
\end{proof}

\bibliographystyle{abbrv}
\bibliography{joint_biblio}
\end{document}